\documentclass[12pt]{amsart}
\usepackage{amsmath,amsfonts,amssymb,mathrsfs,amstext,amscd,latexsym,amsthm}
\usepackage[margin=1.2in]{geometry}
\usepackage{comment}
\usepackage{mathtools}
\usepackage{hyperref}
\usepackage{enumitem}
\usepackage{xcolor}
\usepackage{relsize}
\usepackage{fancyhdr}
\usepackage{bigints}

\def\labelitemi{--}
\def\ba #1\ea {\begin{align} #1\end{align}}
\def\bann #1\eann {\begin{align*} #1\end{align*}}
\def\ben #1\een {\begin{enumerate} #1\end{enumerate}}
\def\bi #1\ei {\begin{itemize}\renewcommand\labelitemi{--} #1\end{itemize}}
\theoremstyle{plain}
\newtheorem{thm}{Theorem}[section]
\newtheorem*{thm*}{Theorem}
\newtheorem{lem}[thm]{Lemma}

\newtheorem{cor}[thm]{Corollary}

\newtheorem{prop}[thm]{Proposition}

\theoremstyle{remark}
\newtheorem{rem}{Remark}
\theoremstyle{problem}
\newtheorem*{problem}{Problem}
\theoremstyle{Yau}
\newtheorem*{Yau}{Yau's conjecture}
\theoremstyle{Chern}
\newtheorem*{Chern}{Chern's conjecture}
\theoremstyle{Chern'}
\newtheorem*{Chern'}{Chern's problem}

\usepackage{cite}

\author[Y.~Zhao]{Yuhang Zhao}
\address{School of Mathematics, Nanjing University, Nanjing, 210093, P. R. of China}
\email{yuhangzhao@smail.nju.edu.cn}
\subjclass{53A10, 34L15, 35P15}
\keywords{minimal hypersurface, unit sphere, eigenvalue estimate  }

\title{ The first  eigenvalue of embedded minimal hypersurfaces in the unit sphere}

\begin{document}

\pagestyle{fancy}
\fancyhead{}
\fancyhf{}
\fancyhead[LE]{\footnotesize\thepage}
\fancyhead[RO]{\footnotesize\thepage}
\renewcommand{\headrulewidth}{0pt}
\begin{abstract}
		In this article,  we prove  that for an embedded minimal hypersurface $\Sigma^{m}$ in $S^{m+1}$, the first eigenvalue $\lambda_1$ of  the Laplacian operator on $\Sigma$ satisfies:  $$\lambda_1> \frac{m}{2}+G(m, |A|_{\max}, |A|_{\min} ) ,$$ 
        where $|A|_{\max}$ and $|A|_{\min}$ denote the maximum and minimum of the norm of the second fundamental form on $\Sigma$, respectively; $G(m, |A|_{\max}, |A|_{\min} )$ is a positive constant that depends only on $m,|A|_{\max}, |A|_{\min}$. In  particular, when the norm $|A|$  of the second fundamental form is constant, we can obtain a gap depending only on $m$, i.e., 
        $$\lambda_1>\left(\frac{1}{2}+ c \right)m ,$$
        where $c$ is a positive absolute constant. 
        This improves Choi and Wang's previous result \cite{chw1983first} that   $\lambda_1\geq \frac{m}{2}$. Our result  shows that one can improve Choi and Wang's result directly without proving Chern's conjecture. This also generalizes Tang and Yan's work \cite{tangyan2013isoparametric}. 
       Based on the proof of the result above, using  the lower bound of the first Steklov eigenvalue, we  prove that if the  norm $|A|$  of the second fundamental form is constant, then 
        $$|A| \leq \frac{C(m)\textup{Volume}(\Sigma)}{\textup{Volume}(S^m)},$$
        where $C(m)$ is a constant that depends only on $m$. This provides a uniform  estimate for the scalar curvature of  embedded minimal hypersurfaces with  constant norm of the second fundamental form.  Moreover, this may be useful for Chern's problem.
        \end{abstract}

    \maketitle
	
\tableofcontents
	
	\section{Introduction}
	
	Let $F:\Sigma^m \rightarrow S^{m+1}$ be a minimal immersion, where $\Sigma$ is compact. It is  well known that  the restriction of any coordinate function of $\mathbb{R}^{m+2}$ to $\Sigma$  is an eigenfunction of the Laplacian operator of $\Sigma$ with eigenvalue $m$, that is, 
    \begin{align}\label{definition}
\Delta^{\Sigma}F=-m F.
    \end{align}
     This implies that the first eigenvalue (of the Laplacian) of  $\Sigma$ is smaller than or equal to $m$.

In \cite{yau1982seminar}, S.T.Yau raised the following  conjecture:
\begin{Yau}
The first eigenvalue of any  compact  embedded minimal hypersurface in $S^{m+1} $ is $m$.
\end{Yau}
In \cite{chw1983first}, Choi and Wang made the first breakthrough. They used Reilly's formula to get $\lambda_1\geq \frac{m}{2}$. In fact, as observed by Brendle \cite[Theorem ~5.1]{bre2013minimal}, Xu-Chen-Zhang \cite{xu1996spectrum} and Barros \cite{bar1999estimates}, the strict inequality $\lambda_1 >\frac{m}{2}$  holds. For $m=2$, by the compactness result \cite{chsch1985space}, we easily yield  $\lambda_1 \geq 1+\epsilon_{g}$(where $\epsilon_g>0$ is a constant depending only on the genus $g$). For the  special case, Choe and Soret \cite{chs2009first} verified that Yau's conjecture is true for the examples constructed by Lawson \cite{Lawson1970complete}  and Karcher-Pinkall-Sterling \cite{Pinkall1988new}.
Tang and Yan\cite{tangyan2013isoparametric}, on the other hand, proved it  under the assumption that the minimal  hypersurface is isoparametric. 

Recently, the author \cite{zhao2023first} improved $\lambda_1>1$  to $$\lambda_1>1+ G(\cot^{-1} |A|_{\max}) ^6$$ in the two-dimensional case, where $G$ is an absolute positive  constant and $|A|_{\max}$ denotes the maximum of the norm of the second fundamental form. Subsequently, Duncan,  Spruck  and Sire \cite{dun2024improved} generalized the result to higher dimensions and obtained a similar gap. Later, Jiménez, Tapia and Zhou \cite{zhou2024lower} improved it further to $$\lambda_1> \frac{m}{2}+ \frac{m(m+1)}{32(12|A|_{\max}+m+11)^2+8}.$$ However, none of these results gives a gap  depending only on $m$. Moreover, these gaps tend to zero as $|A|_{\max}$ tends to infinity; in fact, there are infinitely many examples (e.g., doublings) showing that $|A|_{\max}$ can be arbitrarily large. Consequently, Choi and Wang's theorem remains the best result concerning Yau's conjecture at present.

In this article, we   obtain a better estimate and prove the following main theorem. Since Yau's conjecture holds  trivially   for  totally geodesic spheres, we  need only  consider the non-totally geodesic case, in which $|A_{\max}\geq\sqrt{m} $ from the work of \cite {Simons1968minimal}.
\begin{thm}[Main theorem]\label{main}
Let $F:\Sigma^m \rightarrow S^{m+1} (m\geq 2)$ be a minimal embedding.  If        $\Sigma$ is not totally geodesic, then the first (nonzero) eigenvalue $\lambda_1$  of   $\Sigma$ (with respect to the induced metric) satisfies: \begin{align}
    \lambda_1>\frac{m}{2}+ \frac{\sqrt{m^2-1}}{48|A|_{\max}} \left[\left(\frac{10}{\sqrt{11}}-\sqrt{\frac{m+1}{m}}\right)|A|_{\min} + \frac{13}{2\sqrt{11}}\sqrt{m}-\frac{m+2}{\sqrt{m+1}} \right].\notag
    \end{align}
where
$|A|_{\max}$ and $|A|_{\min}$ denote the maximum and minimum of the norm of the second fundamental form, respectively.
\end{thm}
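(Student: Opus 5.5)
We follow Choi and Wang. Since $\Sigma$ is embedded, it separates $S^{m+1}$ into two components; let $\Omega$ be one of them, with $\partial\Omega=\Sigma$ and outward unit normal $\nu$. We choose $\Omega$ so that $\int_\Sigma \langle A(\nabla^\Sigma f,\nabla^\Sigma f)\rangle \ge 0$, where $f$ is a first eigenfunction on $\Sigma$ with $\Delta^\Sigma f=-\lambda_1 f$. One of the two components has this property, because flipping the side flips the sign of $A$. Let $u$ be the harmonic extension of $f$ to $\Omega$.

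Reilly's formula on $\Omega$ (which has $\mathrm{Ric}=m$) gives
\[
0=\int_\Omega |\nabla^2u|^2+m|\nabla u|^2 + \int_\Sigma \Big(2u_\nu\Delta^\Sigma f + H u_\nu^2 + A(\nabla^\Sigma f,\nabla^\Sigma f)\Big).
\]
Here $H=0$. Using $\int_\Sigma u_\nu f=\int_\Omega|\nabla u|^2$, Choi--Wang's inequality becomes
\[
2\lambda_1\int_\Omega|\nabla u|^2 \ \ge\ m\int_\Omega|\nabla u|^2+\int_\Omega|\nabla^2u|^2+\int_\Sigma A(\nabla^\Sigma f,\nabla^\Sigma f).
\]
The improvement must therefore come from bounding $\int_\Omega|\nabla^2 u|^2$ from below by a positive multiple of $\int_\Omega|\nabla u|^2$, and we make that quantitative.

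The plan is as follows.

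\emph{Step 1.} Since $u$ is harmonic, the refined Kato inequality gives $|\nabla^2u|^2\ge \frac{m+1}{m}|\nabla|\nabla u||^2$, and Bochner's formula gives $\frac12\Delta|\nabla u|^2=|\nabla^2u|^2+m|\nabla u|^2$. Both are used to relate $\int_\Omega|\nabla^2u|^2$ to boundary terms.

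\emph{Step 2.} We integrate Bochner's formula over $\Omega$. This produces the boundary term $\int_\Sigma \langle\nabla_\nu\nabla u,\nabla u\rangle$. Using the Gauss--Codazzi decomposition on $\Sigma$ and $H=0$, this term decomposes into $\int_\Sigma u_\nu\Delta^\Sigma f$, $\int_\Sigma A(\nabla^\Sigma f,\nabla^\Sigma f)$ and $\int_\Sigma \langle\nabla^\Sigma f,\nabla^\Sigma u_\nu\rangle$.

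\emph{Step 3.} We use test functions of the form $u\cdot\langle x,\nu\rangle$, or the coordinate functions (which are eigenfunctions with eigenvalue $m$), to get a lower bound for the Hessian energy. The boundary pieces involving $A$ are controlled by $|A|_{\max}$ through $|A(\nabla f,\nabla f)|\le \sqrt{\tfrac{m-1}{m}}|A||\nabla f|^2$ for traceless $A$. Simons' identity $\frac12\Delta|A|^2=|\nabla A|^2+|A|^2(m-|A|^2)$ supplies terms with $|A|_{\min}$ and the constants $\sqrt m$. The various Cauchy--Schwarz splittings then produce the explicit numbers $\frac{10}{\sqrt{11}}$, $\frac{13}{2\sqrt{11}}$ and $\sqrt{m^2-1}$.

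\emph{Step 4.} Combining Steps 1--3 gives $2\lambda_1-m\ge \delta$, with $\delta$ explicit and of the stated form divided by $48|A|_{\max}$. Strictness follows as in Brendle: equality would force $\nabla^2u=0$, which is impossible.

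The main obstacle is Step 3: obtaining a lower bound on $\int_\Omega|\nabla^2u|^2/\int_\Omega|\nabla u|^2$ that degrades only like $1/|A|_{\max}$ and not like $1/|A|_{\max}^2$. The first thing to try is to test the Hessian against the conformal field $\nabla\langle x,a\rangle$ restricted to $\Omega$. Because the bracket in the statement must be positive, the bound must exploit $|A|_{\min}$ favorably. I would try to use the boundary term $\int_\Sigma A(\nabla f,\nabla f)\ge0$ together with a Simons-type pointwise bound $|A|^2\ge |A|_{\min}^2$ to absorb the negative contributions.
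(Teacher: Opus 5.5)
\textbf{There is a genuine gap.} It sits exactly where you put the main obstacle. Nothing in Steps 1--3 yields a lower bound for $\int_\Omega|D^2u|^2$ in terms of $\int_\Omega|\nabla u|^2$, and the tools you name cannot supply one:
\begin{itemize}
\item Step 2 is precisely the derivation of Reilly's formula, so it only returns the Choi--Wang identity you already wrote.
\item The refined Kato inequality only replaces $\int_\Omega|D^2u|^2$ by $\frac{m+1}{m}\int_\Omega|\nabla|\nabla u||^2$. There is no Poincar\'e inequality for $|\nabla u|$, since it has neither zero mean nor zero boundary values.
\item Your test objects degenerate. On $\Sigma$ we have $\langle x,\nu\rangle\equiv0$, because the normal of $\Sigma$ in $S^{m+1}$ is orthogonal to the position vector. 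Pairing the Hessian of the harmonic $u$ with $D^2\langle x,a\rangle=-\langle x,a\rangle g$ gives $-\langle x,a\rangle\Delta u\equiv0$. More generally, test functions give upper bounds in variational principles, not lower bounds on the Hessian energy of a fixed $u$.
\item Simons' identity involves only $A$ and never couples to $u$.
\item You do not need $|A|_{\min}$ for positivity: the bracket is already positive at $|A|_{\min}=0$ for every $m\ge2$.
\end{itemize}
So the constants in Step 4 are not derived from anything, and Brendle's strictness argument gives $\lambda_1>\frac m2$ with no rate.

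\textbf{What is missing is the paper's mechanism}, which passes through the boundary and through third derivatives.

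First, work on both sides, with $u,v$ the harmonic extensions of $f$ into $\Omega_1,\Omega_2$. Then the $A(\nabla^\Sigma f,\nabla^\Sigma f)$ terms cancel and $\int_{\Omega_1}|D^2u|^2+\int_{\Omega_2}|D^2v|^2=(2\lambda_1-m)E$, where $E$ is the total Dirichlet energy.

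Second, the Hessian on $\Sigma$ is explicit: $D^2u(\mathbf n,\mathbf n)=\lambda_1 f$, the tangential block is $\textup{Hess}^\Sigma f-A\langle\nabla u,\mathbf n\rangle$, and the mixed block is $A(\nabla^\Sigma f)+\nabla^\Sigma\langle\nabla u,\mathbf n\rangle$. Hence $|D^2u|^2+|D^2v|^2$ on $\Sigma$ dominates $2\lambda_1^2f^2+|\nabla^\Sigma J|^2+\frac12|A|^2J^2$, where $J=\langle\nabla u,\mathbf n\rangle-\langle\nabla v,\mathbf n\rangle$ is the jump of the normal derivative. Since $\int_\Sigma J=0$, one has $\int_\Sigma|\nabla^\Sigma J|^2\ge\lambda_1\int_\Sigma J^2$; also $\int_\Sigma|fJ|\ge E$. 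AM--GM then bounds the boundary Hessian energy below by $c\,\lambda_1\sqrt{\lambda_1+|A|_{\min}^2}\,E$. This pointwise use of $|A|^2\ge|A|^2_{\min}$ is the only place $|A|_{\min}$ enters.

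Third, to transfer this to the interior you need the trace inequality
$\int_\Sigma\phi\le2\sqrt{\frac{m}{m-1}}|A|_{\max}\int_\Omega\phi+\int_\Omega|\nabla\phi|$.
It is proved on the collar of width $\cot^{-1}\big(\sqrt{\frac{m-1}{m}}|A|_{\max}\big)$, which is embedded by Howard's rolling theorem. This is where embeddedness is really used, and where the factor $|A|_{\max}$ comes from.

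Fourth, applying the trace inequality to $\phi=|D^2u|^2$ produces a term $\epsilon\int_\Omega|D^3u|^2$. This is controlled by a third-order Reilly identity: apply Reilly's formula to each $\langle\nabla u,\partial_{x_\alpha}\rangle$ and sum over $\alpha$. The result is that $\int_{\Omega_1}|D^3u|^2+\int_{\Omega_2}|D^3v|^2$ equals $2(2\lambda_1^2-2m\lambda_1+m^2+m)E$ plus $A$-weighted boundary terms. Those boundary terms are absorbed into the boundary Hessian energy once $\epsilon\sim1/|A|_{\max}$. The outcome is an explicit positive lower bound for $(2\lambda_1-m)|A|_{\max}$.

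Your one-sided setup discards $J$. It also leaves $A$-weighted cross terms between $f$ and $\langle\nabla u,\nu\rangle$ that the two-sided sum is designed to cancel, so it does not fit this scheme directly.
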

One can  see directly that even scaling $|A|_{\min}$ to zero yields a better result than the recent results \cite{zhao2023first, dun2024improved, zhou2024lower}. Moreover,  when  the ratio $\frac{\max\{|A|_{\min},\sqrt{m} \}}{|A|_{\max}}$ has a universal lower bound,  we can get 
\begin{align}
\lambda_1>\left(\frac{1}{2}+c\right)m,\label{form}
\end{align}
 where $c$ is a  positive absolute  constant.  For this case, a  corollary is  when the norm  of the second fundamental form  $|A|$ is constant. We list it below separately because of its significance.
\begin{thm}\label{main'}
Let $F:\Sigma^m \rightarrow S^{m+1} (m\geq 2)$ be a minimal embedding. 
If the norm  of the second fundamental form  is constant, then the first (nonzero) eigenvalue $\lambda_1$ of   $\Sigma$ (with respect to the induced metric) satisfies: 
\begin{align}
    \lambda_1>\frac{m}{2}+ \frac{\sqrt{m^2-1}}{48}\left(\frac{10}{\sqrt{11}}-\sqrt{\frac{m+1}{m}}\right). \notag
    \end{align}
\end{thm}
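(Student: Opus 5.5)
Theorem \ref{main'} follows from Theorem \ref{main} by specializing to constant $|A|$, so the plan has two parts: reduce to Theorem \ref{main}, then check the arithmetic.

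If $|A|$ is constant and $\Sigma$ is totally geodesic, then $\lambda_1=m$. This exceeds the stated bound, because the added gap is at most $\frac{\sqrt{m^2-1}}{48}\cdot\frac{10}{\sqrt{11}}<\frac{m}{2}$. So we may assume $\Sigma$ is not totally geodesic. Then $|A|\equiv|A|_{\max}=|A|_{\min}=:a$, and by Simons \cite{Simons1968minimal} we have $a\ge\sqrt m$.

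Applying Theorem \ref{main} with $|A|_{\max}=|A|_{\min}=a$ gives
\begin{align}
\lambda_1>\frac m2+\frac{\sqrt{m^2-1}}{48}\left[\left(\frac{10}{\sqrt{11}}-\sqrt{\frac{m+1}{m}}\right)+\frac{1}{a}\left(\frac{13}{2\sqrt{11}}\sqrt m-\frac{m+2}{\sqrt{m+1}}\right)\right].\notag
\end{align}
It remains to show that the last summand is nonnegative, i.e.
\[
\frac{13}{2\sqrt{11}}\sqrt m\ \ge\ \frac{m+2}{\sqrt{m+1}} .
\]
Squaring, this becomes $\frac{169}{44}\,m(m+1)\ge (m+2)^2$. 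Since $\frac{169}{44}\approx 3.84$, it suffices that $3.84(m^2+m)\ge m^2+4m+4$, which holds for all $m\ge2$: at $m=2$ the two sides are $23.04$ and $16$, and for larger $m$ the left side grows faster.

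Dropping this nonnegative term then gives exactly the bound in Theorem \ref{main'}. The only other point to check is that the remaining coefficient is positive: $\frac{10}{\sqrt{11}}\approx3.015>\sqrt{3/2}\ge\sqrt{(m+1)/m}$ for $m\ge2$, so the stated gap is strictly positive.

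The whole difficulty therefore sits in Theorem \ref{main}; given that result, the present statement is pure arithmetic, and there is no real obstacle.
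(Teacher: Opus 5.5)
Your proposal is correct and follows the paper, which presents Theorem \ref{main'} as an immediate corollary of Theorem \ref{main} with $|A|_{\max}=|A|_{\min}$. You also make explicit two checks the paper leaves implicit: that $\frac{13}{2\sqrt{11}}\sqrt m-\frac{m+2}{\sqrt{m+1}}\ge 0$ for $m\ge 2$, and that the totally geodesic case ($\lambda_1=m$) satisfies the bound. The appeal to Simons is harmless but unnecessary, since only $a>0$ is needed to drop the nonnegative term.
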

One can observe that this result satisfies  \eqref{form}.  More precisely, as $m$
tends to infinity, the gap is asymptotic to $\frac{1}{48}\left(\frac{10}{\sqrt{11}} -1 \right)m \approx 0.042 m$. Furthermore, we believe that this result can be further improved, as  some estimates are rough in the proof. This paper  aims to yield an explicit gap depending only on $m$. There is an interesting problem here:
\begin{problem}
Is it possible to improve this gap to a value  close to $\frac{m}{2}$? 
\end{problem}
Regarding the case where the norm of the second fundamental form is constant, much progress has been made in recent decades, and  the most significant is Chern's conjecture:
\begin{Chern}
  Let $ F :\Sigma^m \rightarrow S^{m+1}$ be a minimal immersion. If the  norm of   the second fundamental form is constant, then $\Sigma$ is isoparametric.
\end{Chern}
The conjecture was originally proposed in a less  strong version by Chern in \cite{Chernminimal} and \cite{CherndoCarmominimal}. So far,  progress on Chern's conjecture has only been made completely in dimensions 2 and 3, and  partially in  higher dimensions. The latest advance is a recent result proved by He, Xu and Zhao \cite{XuChern}, which states that any closed minimal hypersurface $\Sigma^4$ in $S^5$ with constant scalar curvature and constant 3-th mean curvature must be isoparametric. For other related progress, see \cite{CherndoCarmominimal, Lawson1969rigidity, Pengminimal, Changminimal, Lusalawillmore2005, Dengwillmore }.

Concerning  the relationship between Chern's conjecture and Yau's conjecture,  Tang and Yan \cite{tangyan2013isoparametric} proved that if Chern's conjecture holds, then Yau's conjecture will   also hold under the assumption that the norm of the second fundamental  form is  constant. However, our result (see Theorem \ref{main'} )  shows that  one can  skip proving Chern's conjecture to improve Choi and Wang's result \cite{chw1983first} directly. Furthermore, our proof is  completely different from  theirs, and it is suitable for general embedded minimal hypersurfaces.  Compared to their complete resolution  in the isoparametric case, we can only obtain  a very small gap. This is because, in the case that the norm of the  second fundamental form is constant, we know very little about the structure of the minimal hypersurface , especially in dimensions greater than four. Moreover, unlike their work, which depends deeply on the structure and  classification of isoparametric minimal hypersurfaces, our  proof uses only basic information about minimal hypersurfaces.

To prove the  main theorem, we need to prove the following theorem:
\begin{thm}\label{computation}
Let $F:\Sigma^m \rightarrow S^{m+1} (m\geq 2)$ be a minimal embedding, the image $F(\Sigma)$  of which divides  $S^{m+1}$ into two connected regions: $\Omega_1$ and $\Omega_2$ such that $\partial \Omega_1=\partial \Omega_2=\Sigma$. Identify $\Sigma$ with $F(\Sigma)$. Let $u$ and $v$ be smooth up to the boundary on $\Omega_1$ and $\Omega_2$, respectively, such that $u|_{\Sigma}=v|_{\Sigma}=f$, where $f$ is a smooth function on $\Sigma$. Then 
\begin{align}
& 2\int_{\Sigma} A\big(\nabla^{\Sigma}(\Delta v-\Delta u),\nabla^{\Sigma} f\big)+2 \int_{\Sigma} \langle \nabla u,\mathbf{n} \rangle \cdot \Delta^{\Sigma} (\Delta u-2\Delta^{\Sigma}f)\notag\\
&-2 \int_{\Sigma} \langle \nabla v,\mathbf{n} \rangle \cdot \Delta^{\Sigma} (\Delta v-2\Delta^{\Sigma}f)+4m \int_{\Sigma} \big( \langle \nabla v,\mathbf{n} \rangle - \langle \nabla u,\mathbf{n} \rangle \big)\cdot\Delta^{\Sigma} f\notag\\
&+5\int_{\Sigma}A\Big(\big( D_{\mathbf{n}}\nabla v\big)^{\top},\big( D_{\mathbf{n}}\nabla v\big)^{\top}\Big)- 5\int_{\Sigma}A\Big(\big(D_{\mathbf{n}}\nabla u\big)^{\top},\big( D_{\mathbf{n}}\nabla u\big)^{\top}\Big) \notag\\
& +2\int_{\Sigma} (\Delta u-\Delta^{\Sigma} f )\cdot \Big\langle A,F^{\star}\big(D^2u \big)\Big\rangle -2\int_{\Sigma} (\Delta v-\Delta^{\Sigma} f )\cdot \Big\langle A,F^{\star}\big(D^2v \big)\Big\rangle\notag\\
&+ \int_{\Sigma} \textup{Trace}_{\Sigma} \Bigg (A\Big(\big( D_{()} \nabla v \big)^{\top} ,\big( D_{()} \nabla v \big)^{\top}\Big)\Bigg)\notag\\
&-\int_{\Sigma} \textup{Trace}_{\Sigma} \Bigg (A\Big(\big( D_{()} \nabla u \big)^{\top} ,\big( D_{()} \nabla u \big)^{\top}\Big)\Bigg)\notag\\
=&\int_{\Omega_1}|D^3 u|^2 + \int_{\Omega_2}|D^3 v|^2 -2m(m+1)\int_{\Omega_1}|\nabla u|^2- 2m(m+1)\int_{\Omega_2}|\nabla v|^2\notag\\
&-\int_{\Omega_1}|\nabla \Delta u |^2-\int_{\Omega_2}|\nabla \Delta v |^2
+m\int_{\Omega_1}(\Delta u)^2 +m\int_{\Omega_2}(\Delta v)^2 \notag\\
&-2m\int_{\Omega_1}\langle\nabla \Delta u,\nabla u \rangle-2m\int_{\Omega_2}\langle\nabla \Delta v,\nabla v    \rangle,\notag
\end{align}
where  $\mathbf{n}$ is the  inward-pointing  unit normal vector field on $\Sigma$ with respect to $\Omega_1$,  $A$ is the corresponding  second fundamental form,  $\top$ denotes the projection onto the tangent bundle of $\Sigma$, $F^{\star}$ denotes the pull-back of tensors corresponding to the map: $F:\Sigma \rightarrow S^{m+1} $ and  $$\textup{Trace}_{\Sigma} \Bigg (A\Big(\big( D_{()} \nabla u \big)^{\top} ,\big( D_{()} \nabla u \big)^{\top}\Big)\Bigg)=\sum_{i=1}^{m}
A\Big(\big( D_{\overline{e_i}} \nabla u \big)^{\top} ,\big(D_{\overline{e_i}} \nabla u \big)^{\top}\Big)$$ in any local orthonormal frame $\{\overline{e_l} \}_{l=1}^{m}$ on $\Sigma$ \textup{(}here $\overline{e_i}$ is identified with its image under the tangent map with respect to $F$\textup{)}. Other specific notations are defined in Section \ref{preliminaries}.
\end{thm}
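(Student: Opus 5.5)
This identity is a third-order Reilly-type formula. I would prove it by integrating a Bochner identity for $|D^2u|^2$ over each region separately. Then I would rewrite each boundary term through the geometry of $\Sigma$. Summing the two regions, the terms involving only $f$ cancel because $u=v=f$ on $\Sigma$.

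\textbf{Interior step.} On $S^{m+1}$, with $\textup{Ric}=m\,g$ and sectional curvature $1$, I commute covariant derivatives twice to get a pointwise identity:
$$\tfrac12\Delta|D^2u|^2=|D^3u|^2+\langle D^2u,D^2\Delta u\rangle+(\text{curvature terms}).$$
The curvature terms are combinations of $|D^2u|^2$, $(\Delta u)^2$ and $\langle\nabla\Delta u,\nabla u\rangle$. Integrating over $\Omega_1$ then gives $\int_{\Omega_1}|D^3u|^2$ plus boundary integrals. Next I integrate $\int\langle D^2u,D^2\Delta u\rangle$ by parts, using the Bochner formula with $\textup{Ric}=m$:
$$\int|D^2u|^2=\int(\Delta u)^2-m\int|\nabla u|^2+\text{boundary}.$$
This converts every interior term into the combination $-2m(m+1)\int|\nabla u|^2-\int|\nabla\Delta u|^2+m\int(\Delta u)^2-2m\int\langle\nabla\Delta u,\nabla u\rangle$ appearing on the right. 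I would fix the exact constants by carefully tracking the commutator $D^3u(X,Y,Z)-D^3u(Y,X,Z)=R(\cdot)\nabla u$ on the round sphere.

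\textbf{Boundary step.} The boundary integrals are of the form $\int_\Sigma D_{\mathbf n}(\cdots)$ and $\int_\Sigma \langle D^2u\,\cdot,D_{\mathbf n}\nabla u\rangle$. Along $\Sigma$ I split every ambient tensor into tangential and normal parts with respect to an adapted frame $\{\overline{e_i},\mathbf n\}$. I would use:
\begin{itemize}
\item $\nabla u=\nabla^\Sigma f+\langle\nabla u,\mathbf n\rangle\mathbf n$;
\item $F^\star(D^2u)=\nabla^{\Sigma,2}f+\langle\nabla u,\mathbf n\rangle A$, which with $H=0$ gives $\Delta u=\Delta^\Sigma f+D^2u(\mathbf n,\mathbf n)$;
\item the Codazzi equation, which on the sphere says $\nabla^\Sigma A$ is totally symmetric and $\textup{div}A=0$.
\end{itemize}
The tangential divergence terms on $\Sigma$ are removed by integrating by parts on the closed manifold $\Sigma$. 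The mixed terms $(D_{\overline{e_i}}\nabla u)^\top$ and $(D_{\mathbf n}\nabla u)^\top$ then produce the quadratic expressions in $A$. Since $\mathbf n$ is inward for $\Omega_1$ and outward for $\Omega_2$, the boundary terms from $\Omega_2$ enter with the opposite sign, which explains the $u$--$v$ antisymmetry of the left side.

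\textbf{Cancellation and main obstacle.} Contributions depending only on $f$, for example $\int A(\nabla^{\Sigma}\Delta^\Sigma f,\nabla^\Sigma f)$ and $\int|\nabla^{\Sigma,2}f|^2$-type terms, appear identically from both sides and cancel. The normal data $\langle\nabla u,\mathbf n\rangle$, $\Delta u$ and $D^2u(\mathbf n,\cdot)$ remain, and these are exactly the terms listed in the statement. The main obstacle is the boundary bookkeeping. Many third-order terms such as $D^3u(\mathbf n,\overline{e_i},\overline{e_j})$ must be converted, via $\Delta^\Sigma$ of the restricted Hessian and the Gauss–Codazzi equations, into the listed combinations, and the coefficients $5$, $2$ and $4m$ must come out exactly. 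My approach would be to first verify the formula on a model, such as the equator with an explicit $u$, to pin down the constants. I would then do the general computation in normal coordinates at a point of $\Sigma$, pairing each $u$-term with its $v$-counterpart.
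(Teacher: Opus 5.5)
Your overall strategy is sound, and in substance it is what the paper does: integrate a Bochner identity for $|D^2u|^2$ over each region, remove $\int|D^2u|^2$ with Reilly's formula, and add the two one-sided identities. The gap is that the proposal stops where the proof begins. The theorem is an exact identity, and you neither carry out the boundary reduction nor give a method that could determine its coefficients.

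\textbf{The proposed calibration cannot work.} On the equator $A\equiv 0$, so every $A$-weighted term vanishes identically:
\begin{itemize}
\item the $5$ on $A((D_{\mathbf n}\nabla u)^{\top},(D_{\mathbf n}\nabla u)^{\top})$;
\item the $2$ on $(\Delta u-\Delta^{\Sigma}f)\langle A,F^{\star}(D^2u)\rangle$;
\item the trace term;
\item $2\int_\Sigma A(\nabla^{\Sigma}(\Delta v-\Delta u),\nabla^{\Sigma}f)$.
\end{itemize}
A hemisphere test only confirms the $A$-free coefficients ($2$ and $4m$). More basically, a model can only fix coefficients inside an ansatz whose list of admissible terms has already been justified, and that justification \emph{is} the computation.

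\textbf{Three details in what you wrote would also cause trouble.}
\begin{itemize}
\item With the paper's convention $A(\eta_1,\eta_2)=\langle D_{\eta_1}\eta_2,\mathbf n\rangle$, one has $F^{\star}(D^2u)=(D^{\Sigma})^2f-\langle\nabla u,\mathbf n\rangle A$, not $+$. This is exactly the kind of sign an equator check cannot detect.
\item The interior reduction is not quite as you state. The identities $\tfrac12\Delta|D^2u|^2=|D^3u|^2+\langle D^2u,D^2\Delta u\rangle+2(m+1)|D^2u|^2-2(\Delta u)^2$ and $\textup{div}\,D^2u=\nabla\Delta u+m\nabla u$, together with Reilly, yield $2m\int(\Delta u)^2-m\int\langle\nabla\Delta u,\nabla u\rangle$. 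This differs from the stated right-hand side by the boundary term $-m\int_\Sigma\Delta u\,\langle\nabla u,\mathbf n\rangle$, which has to be absorbed into the boundary bookkeeping.
\item The $f$-only terms cancel because every one-sided boundary term is odd under $(\mathbf n,A)\mapsto(-\mathbf n,-A)$, not because they "appear identically." An even term such as $\int_\Sigma|(D^{\Sigma})^2f|^2$ cannot occur at all; if it did, it would double rather than cancel.
\end{itemize}

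\textbf{The device the paper uses, which your proposal lacks.} The paper treats $\nabla u$ as an $\mathbb R^{m+2}$-valued map and applies the \emph{scalar} Reilly formula (Proposition \ref{Reilly}) to each component $w_\alpha=\langle\nabla u,\partial/\partial x_\alpha\rangle$. Lemma \ref{some diffential computations} then makes the interior side explicit:
\begin{itemize}
\item $\sum_\alpha|\nabla w_\alpha|^2=|D^2u|^2+|\nabla u|^2$;
\item an explicit formula for $\Delta w_\alpha$;
\item $\sum_\alpha|D^2w_\alpha|^2=|D^3u|^2+4|D^2u|^2-(m-1)|\nabla u|^2-2\langle\nabla\Delta u,\nabla u\rangle$.
\end{itemize}
The boundary side involves only $\nabla^{\Sigma}w_\alpha$ and $\nabla^{\Sigma}\langle\nabla w_\alpha,\mathbf n\rangle$. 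So no $D^3u(\mathbf n,\mathbf n,\mathbf n)$ or $\partial_{\mathbf n}\Delta u$ ever has to be eliminated. One pointwise computation at $q\in\Sigma$, in a frame diagonalizing $A$, writes the summed boundary integrand as $\textup{div}^{\Sigma}Y$ plus exactly the listed terms. The $f$-only pieces $(3m-1)\int A(\nabla^{\Sigma}f,\nabla^{\Sigma}f)$ and $4\int A(\nabla^{\Sigma}\Delta^{\Sigma}f,\nabla^{\Sigma}f)$ drop out once the $\Omega_2$ version is added. A second use of Reilly removes $\int|D^2u|^2$.

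\textbf{Your intrinsic route can be completed.} In $-\tfrac12\partial_{\mathbf n}|D^2u|^2+D^2u(\nabla\Delta u,\mathbf n)$ the $D^3u(\mathbf n,\mathbf n,\mathbf n)$ terms do cancel. What remains can be commuted into tangential derivatives of $(f,\langle\nabla u,\mathbf n\rangle,\Delta u|_\Sigma)$ and integrated by parts on $\Sigma$, using Gauss ($\textup{Ric}^{\Sigma}=(m-1)g-A^2$) and Codazzi. Until that reduction is actually carried out to the stated form, however, the proposal does not prove the identity.
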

  Here we compute the general case, although in the proof of the main theorem, we choose $f$ to be an eigenfunction with eigenvalue $\lambda_1$ and let  $u$ and $v$   be harmonic extensions of $f$ to the interior of  the regions $ \Omega_1$ and $\Omega_2$, respectively. The approach  to this theorem is to directly compute the Laplacians of $|D^2 u|^2$ and $|D^2 v|^2$,  then perform integration by parts on them over the regions $\Omega_1$ and $\Omega_2$, respectively,  and finally add the two integrals to cancel some boundary terms.  The main motivation for proving this theorem is that, in Choi and Wang's  proof \cite{chw1983first}, they only computed the Laplacians of $|\nabla u|^2$ and $|\nabla v|^2$ and the term $$\int_{\Omega_1}|D^2 u|^2+\int_{\Omega_2} |D^2 v|^2 $$  was be simply thrown away, which we consider insufficient. Thus, it is necessary to compute higher-order derivatives to analyze the Hessian term carefully. Considering the particular setting of $S^{m+1}$, we  adopt a  slightly simpler and clearer approach than the general computation of derivative of  tensors, the  advantage of which  is that we can directly use Reilly's  formula to compute the integrals $\int_{\Omega_1}\Delta |D^2 u|^2$ and  $\int_{\Omega_2}\Delta |D^2 v|^2$.    The computational details can be found in Section \ref{computation'}.

  To prove the main theorem, we first need the  fact that the rolling radii $d_1$ and $ d_2$ of  $\Omega_1$  and $\Omega_2$ are equal to their respective focal distances, see Theorem \ref{rolling}. Let $d_0=\min\{ d_1, d_2\}$.  Hence, $d_0$ has a lower bound depending on the maximum of the  norm of the second fundamental form on $\Sigma$, and  the exponential map $\exp_{p}(r\mathbf{n}(p))$ defines a diffeomorphism between $(-d_0, d_0) \times \Sigma $ and the tubular neighborhood $ \{p\in S^{m+1}| \textup{distance}(p, \Sigma)<d_0 \} $. Based on these facts, using  the cut-off function on this tubular neighborhood, we prove that when $\Sigma$ is not totally geodesic,
\begin{align}
    &\int_{\Sigma}(|D^2 u|^2 +|D^2 v|^2) \notag\\
    \leq &\mathcal{K}(|A|_{\max},m) \Bigg( \int_{\Omega_1} |D^2 u|^2 + \int_{\Omega_2} |D^2 v|^2\Bigg)+ \frac{\mathcal{E}}{|A|_{\max} } \Bigg(\int_{\Omega_1} |D^3 u|^2 +\int_{\Omega_2} |D^3 v|^2\Bigg).\notag
    \end{align}
where $\mathcal{K}(|A|_{\max},m) $  is a positive constant depending only on $m$ and $|A|_{\max}$, and $\mathcal{E}$ is a sufficiently small absolute constant. 

Next, we combine this estimate, Theorem \ref{computation} and Choi and  Wang's work, substitute $f$ as the eigenfunction with eigenvalue $\lambda_1$, let  $u$ and $v$ be the corresponding harmonic extensions, and then bound the integrals of the boundary terms in Theorem \ref{computation}.  Then  the proof follows from the expression of the term $$ |D^2 u|^2 +|D^2 v|^2 $$ restricted to the boundary $\Sigma$ (see \eqref{boundary information 3}). The  detailed steps are provided in Section \ref{main proof}. 

In fact, in the proof of the main theorem, we can  find that when the norm $|A|$ of the second fundamental form is constant and $\Sigma$ is not totally geodesic, for the functions $f,u,v$ above, we have
$$\frac{\int_{\Omega_1}|\nabla u |^2 +\int_{\Omega_2}|\nabla v|^2  }{\int_{\Sigma} f^2 }<\frac{E(m)}{|A|},$$ where $E(m)$ is a constant that depends only on $m$. The term $$\frac{\int_{\Omega_1}|\nabla u |^2 +\int_{\Omega_2}|\nabla v|^2  }{\int_{\Sigma} f^2 }$$ is directly related to the lower bound of the first nonzero eigenvalue of the Dirichlet- Neumann map. In other words, once the  lower bound of the first nonzero spectrum of the Dirichlet-to-Neumann map is obtained, then  together with the upper bound above,  we arrive at the following theorem, see Section \ref{curvature-steklov}. The discussion of the Dirichlet- Neumann map is presented after Corollary \ref{curvature upper bound'}.
\begin{thm}\label{curvature upper bound}
Let $F:\Sigma^m \rightarrow S^{m+1} (m\geq 2)$ be a minimal embedding. 
If the norm  of the second fundamental form  $|A|$ is constant, then $$|A| \leq  \frac{C(m)\textup{Volume}(\Sigma)}{\textup{Volume}(S^m)},$$
where $$C(m)=\frac{ 9\sqrt{m(m-1)}+\frac{8(m+1)\sqrt{m}}{5\sqrt{m-1}} }{1-\frac{4(2m^2-3m+2)}{25(m-1)^2}}\left(\frac{1}{\sin(\delta_m)}+(m+1)\delta_m \right), $$
$$\sin^2(\delta_m)= \frac{2}{\sqrt{4(m+1)^2+1}+1}\ ,0<\delta_m<\frac{\pi}{2}.$$
\end{thm}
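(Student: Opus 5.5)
The plan combines two bounds on one Rayleigh-type quotient for the harmonic extensions of a single eigenfunction. Choose $f$ to be a first eigenfunction on $\Sigma$, so that $\Delta^{\Sigma}f=-\lambda_1 f$, and let $u$, $v$ be its harmonic extensions to $\Omega_1$ and $\Omega_2$. Set
$$Q=\frac{\int_{\Omega_1}|\nabla u|^2+\int_{\Omega_2}|\nabla v|^2}{\int_\Sigma f^2}=\frac{\int_\Sigma f\big(\langle\nabla v,\mathbf{n}\rangle-\langle\nabla u,\mathbf{n}\rangle\big)}{\int_\Sigma f^2}.$$
The idea is to bound $Q$ from above by a quantity decaying like $E(m)/|A|$ and from below by a Steklov-type constant. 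This lower constant should be governed by the geometry of $S^{m+1}$ and by $\textup{Volume}(\Sigma)$, but not by $|A|$.

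\textbf{Upper bound.} This step reruns the proof of Theorem \ref{main} with $|A|$ constant. In that setting $|A|_{\max}=|A|_{\min}=|A|\ge\sqrt m$, and the focal distance bound gives $d_0\ge\cot^{-1}(|A|)$. I would feed the harmonic extensions into Theorem \ref{computation}; since $\Delta u=\Delta v=0$, most boundary terms vanish. I would then use the tubular-neighbourhood trace estimate for $\int_\Sigma(|D^2u|^2+|D^2v|^2)$, with cut-off at scale $d_0\sim 1/|A|$. Together with Choi--Wang's Reilly-formula identity, which expresses $\int_\Sigma A(\nabla^\Sigma f,\nabla^\Sigma f)$ and the normal-derivative terms through $\int_{\Omega_i}|D^2\cdot|^2$ and $(2\lambda_1-m)\int|\nabla\cdot|^2$, this should yield an inequality of the form
$$c_1(m)|A|\Big(\int_{\Omega_1}|\nabla u|^2+\int_{\Omega_2}|\nabla v|^2\Big)\le c_2(m)\lambda_1\int_\Sigma f^2 .$$
Using $\lambda_1\le m$, this becomes $Q<E(m)/|A|$. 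The constants $9\sqrt{m(m-1)}+\tfrac{8(m+1)\sqrt m}{5\sqrt{m-1}}$ and the denominator $1-\tfrac{4(2m^2-3m+2)}{25(m-1)^2}$ in $C(m)$ should come out of this step, after absorbing the $D^3$ terms with the small constant $\mathcal{E}$.

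\textbf{Lower bound.} This is the main obstacle. The target is a lower bound for the first nonzero eigenvalue of the two-sided Dirichlet-to-Neumann map,
$$Q\ge \frac{c\,\textup{Volume}(S^m)}{\textup{Volume}(\Sigma)},\qquad c^{-1}=\frac{1}{\sin\delta_m}+(m+1)\delta_m,$$
which should hold for $f$ orthogonal to constants on $\Sigma$. An eigenfunction of $\Delta^{\Sigma}$ with $\lambda_1>0$ has $\int_\Sigma f=0$, so it qualifies.

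I would prove the lower bound by duality. Given $f$ with $\int_\Sigma f=0$, control $\int_\Sigma f^2$ by testing $f$ against a vector field on $S^{m+1}$ built from the geometry of the sphere. The natural choice is a radial field around a point, truncated at geodesic radius $\delta_m$, whose divergence is bounded using $\Delta r\ge m\cot r$ together with a correction of size $(m+1)\delta_m$. Integrating $\textup{div}(u X)$ over $\Omega_1$ and $\textup{div}(vX)$ over $\Omega_2$ and using Cauchy--Schwarz gives
$$\int_\Sigma f^2\lesssim \Big(\tfrac{1}{\sin\delta_m}+(m+1)\delta_m\Big)\frac{\textup{Vol}(\Sigma)}{\textup{Vol}(S^m)}\,\Big(\int_{\Omega_1}|\nabla u|^2+\int_{\Omega_2}|\nabla v|^2\Big).$$
The value of $\delta_m$ would be fixed by optimizing the resulting quadratic in $\sin^2\delta$, which produces the stated $\sin^2\delta_m=2/(\sqrt{4(m+1)^2+1}+1)$.

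\textbf{Combining.} Putting the two bounds together gives
$$\frac{c\,\textup{Vol}(S^m)}{\textup{Vol}(\Sigma)}\le Q<\frac{E(m)}{|A|},$$
and rearranging yields the theorem with $C(m)=E(m)/c$. The step I expect to resist most is the duality argument in the lower bound, in particular choosing the vector field so that the boundary term produces $\int_\Sigma f^2$ modulo the mean.
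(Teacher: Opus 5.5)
Your overall architecture matches the paper's: an upper bound $Q<E(m)/|A|$ from the boundary identities, the lower bound $Q\geq\tau_1\geq \textup{Volume}(S^m)/(D(m)\textup{Volume}(\Sigma))$, and $C(m)=E(m)D(m)$. The genuine gap is in the lower bound, which carries the real content of the theorem.

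\textbf{The lower bound.} Integrating $\textup{div}(uX)$ over $\Omega_1$ and $\textup{div}(vX)$ over $\Omega_2$ is linear in $u,v$. It produces a linear functional $\int_\Sigma f\langle X,\mathbf{n}\rangle$, which vanishes on a codimension-one subspace of mean-zero $f$ and so cannot control $\int_\Sigma f^2$. More decisively, nothing in your sketch produces the factor $\textup{Volume}(\Sigma)$ or uses minimality, and the bound is false without minimality. For a small geodesic sphere $\Sigma_r$ one has $\textup{Volume}(\Sigma_r)=\textup{Volume}(S^m)\sin^m r$. Testing with a first spherical harmonic gives $\tau_1\lesssim (m+1)/r$, which is far below $1/(D(m)\sin^m r)$ for $m\geq 2$ as $r\to 0$.

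The paper's argument is a quadratic, averaged version of your idea.
\begin{itemize}
\item Glue $\mathcal{H}_1 g$ and $\mathcal{H}_2 g$, where $g$ is the Steklov eigenfunction, into a Lipschitz $w$ on $S^{m+1}$, and subtract its mean to get $\overline{w}$. Poincar\'e on $S^{m+1}$ then gives $\int_{S^{m+1}}\overline{w}^2\leq\frac{\tau_1}{m+1}\int_\Sigma g^2$.
\item For each $y\in\Sigma$, apply the divergence theorem to $\overline{w}^2\nabla\cos\rho/\sin^{m+1}\rho$ on balls about $y$. This gives a mean-value identity for $\overline{w}^2(y)$. The interface contributes $2\tau_1\int_{B_t(y)\cap\Sigma}(\cos\rho-\cos t)\overline{w}g$ via the Steklov equation, and the Dirichlet terms have a favorable sign and are dropped.
\item Integrate over $y\in\Sigma$ and use the monotonicity $\int_{B_t(z)\cap\Sigma}\cos\rho\leq\textup{Volume}(\Sigma)\sin^m t$ of Proposition~\ref{volume growth}. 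This is exactly where minimality enters, and it yields the terms $1/\sin s$ and $(m+1)s$.
\item The condition $\int_\Sigma g=0$ is used only through $\int_\Sigma g^2\leq\int_\Sigma\overline{w}^2$.
\end{itemize}

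\textbf{The upper bound.} This step is underspecified in a way that matters. Rerunning the proof of Theorem~\ref{main} does not bound $Q$ from above: that proof pairs the $f^2$ and $(\langle\nabla u,\mathbf{n}\rangle-\langle\nabla v,\mathbf{n}\rangle)^2$ terms by AM--GM to bound $\lambda_1$ from below. The missing step is as follows.
\begin{itemize}
\item In Lemma~\ref{final formula 2}, take $\epsilon=\frac{2}{5}\sqrt{\frac{m}{m-1}}\frac{1}{|A|}$ and $\beta=\frac{m+1}{2m}\epsilon|A|^2$. The coefficients of $f^2$ and of $|\nabla^\Sigma(\langle\nabla u,\mathbf{n}\rangle-\langle\nabla v,\mathbf{n}\rangle)|^2$ then vanish exactly.
\item What survives is $\frac12\big(1-\frac{4(2m^2-3m+2)}{25(m-1)^2}\big)|A|^2\int_\Sigma(\langle\nabla v,\mathbf{n}\rangle-\langle\nabla u,\mathbf{n}\rangle)^2$. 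Here $|A|^2$ comes out of the integral because it is constant.
\item Cauchy--Schwarz with $\int_\Sigma f(\langle\nabla v,\mathbf{n}\rangle-\langle\nabla u,\mathbf{n}\rangle)=\int_{\Omega_1}|\nabla u|^2+\int_{\Omega_2}|\nabla v|^2$ bounds this integral below by $Q^2\int_\Sigma f^2$. This is the only way $\int_\Sigma f^2$ reaches the large side.
\item The resulting inequality is quadratic in $Q$. Dividing by the Dirichlet energy and using $\lambda_1\leq m$ and $|A|^2>m$ gives $E(m)$.
\end{itemize}
In particular, the denominator of $E(m)$ is twice this surviving coefficient, not an artifact of absorbing $D^3$ terms.
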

The result yields a uniform estimate for embedded  minimal hypersurfaces with constant norm of the second fundamental form. Moreover, it may provide some evidence for the following Chern's problem, which is stated as follows:
\begin{Chern'}
Let $F :\Sigma^m \rightarrow S^{m+1}$ be a minimal immersion. If the norm  of the  the second fundamental form is constant, then does  there exist a positive constant $\mathcal{C}(m)$ that  depends only on  $m$   such that $$|A| \leq \mathcal{C}(m)?$$
\end{Chern'}
It is a slightly weaker version of Chern's conjecture and was proposed  by Chern in \cite{CherndoCarmominimal}. From theorem \ref{curvature upper bound}, under the additional  embedding assumption, if one  can prove that the volume of $\Sigma$ admits  a uniform upper bound, then Chern's problem is solved. Volume is often a better quantity than curvature in differential geometry. 
Combining Theorem \ref{curvature upper bound}, Theorem \ref{main'} and Corollary \ref{area curvature}, we immediately obtain the following corollaries:
\begin{cor}\label{eigenvalue'}
    Let $F:\Sigma^m \rightarrow S^{m+1} (m\geq 2)$ be a minimal embedding. 
If the norm  of the second fundamental form  is constant, then the first (nonzero) eigenvalue $\lambda_1$ of   $\Sigma$ (with respect to the induced metric) satisfies: 
\begin{align}
    \lambda_1>\frac{m}{2}+ \frac{\sqrt{m^2-1}}{48}\left[\frac{10}{\sqrt{11}}-\sqrt{\frac{m+1}{m}} + \frac{\frac{13}{2\sqrt{11}}\sqrt{m}-\frac{m+2}{\sqrt{m+1}}}{C(m)}\cdot \frac{\textup{Volume}(S^m)}{\textup{Volume}(\Sigma)}\right], \notag
    \end{align}
where $C(m)$ is the constant in Theorem \ref{curvature upper bound}\end{cor}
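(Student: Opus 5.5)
Since $|A|$ is constant, $|A|_{\max}=|A|_{\min}=|A|$, and $|A|\ge\sqrt m>0$ because $\Sigma$ is not totally geodesic (a totally geodesic $\Sigma$ has $\lambda_1=m$, so that case holds trivially). The plan is to combine the main theorem with the curvature bound of Theorem \ref{curvature upper bound}, and to use the latter only to bound the term that comes with a $1/|A|$ factor.

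First, apply Theorem \ref{main} with $|A|_{\max}=|A|_{\min}=|A|$:
\begin{align*}
\lambda_1>\frac m2+\frac{\sqrt{m^2-1}}{48}\left[\frac{10}{\sqrt{11}}-\sqrt{\frac{m+1}{m}}+\frac{1}{|A|}\left(\frac{13}{2\sqrt{11}}\sqrt m-\frac{m+2}{\sqrt{m+1}}\right)\right].
\end{align*}
The ratio $|A|_{\min}/|A|_{\max}$ is $1$, which produces the first two terms in the bracket. The remaining term has the factor $1/|A|$.

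Next, control that factor from below. Theorem \ref{curvature upper bound} gives
\begin{align*}
\frac1{|A|}\ge \frac{\textup{Volume}(S^m)}{C(m)\,\textup{Volume}(\Sigma)}.
\end{align*}
This may be substituted only if the coefficient $\frac{13}{2\sqrt{11}}\sqrt m-\frac{m+2}{\sqrt{m+1}}$ is nonnegative. Checking that sign is the one step needing care. We have $\frac{13}{2\sqrt{11}}\approx1.960$. On the other side, $\frac{m+2}{\sqrt{m+1}}=\sqrt{m+1}+\frac1{\sqrt{m+1}}$, and its ratio to $\sqrt m$ tends to $1$ as $m\to\infty$, so the sign is positive for large $m$. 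It remains to check small $m$. At $m=2$ the coefficient is $1.960\cdot1.414-4/1.732\approx2.77-2.31>0$. For all $m\ge2$, use $\frac{m+2}{\sqrt{m+1}}\le\sqrt{m+1}+1\le\sqrt m+\frac{1}{2\sqrt m}+1$. This upper bound is less than $1.96\sqrt m$ once $0.96\sqrt m>1+\frac{1}{2\sqrt m}$, which holds for $m\ge2$ since $0.96\sqrt2\approx1.36>1.35$. So the coefficient is positive for every $m\ge2$, and the substitution preserves the strict inequality.

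Finally, substituting gives exactly the inequality of Corollary \ref{eigenvalue'}. The only nontrivial check is the sign verification above; everything else is direct substitution.
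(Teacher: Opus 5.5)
Your proposal is correct and is the paper's own (implicit, one-line) argument: set $|A|_{\max}=|A|_{\min}=|A|$ in Theorem \ref{main} and bound the $1/|A|$ term from below using Theorem \ref{curvature upper bound}. The only needed fact is positivity of $\frac{13}{2\sqrt{11}}\sqrt m-\frac{m+2}{\sqrt{m+1}}$, which the paper uses without checking and which you verify. One cosmetic point: $\frac{13}{2\sqrt{11}}\approx 1.9598<1.96$, so the comparison should read $0.9598\sqrt m>1+\frac{1}{2\sqrt m}$; this still holds at $m=2$, since $0.9598\sqrt2\approx 1.3574>1.3536$.
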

\begin{cor}\label{curvature upper bound'}
Let $F:\Sigma^m \rightarrow S^{m+1} (m\geq 2)$ be a minimal embedding. If $\Sigma$ is not totally geodesic and  the norm $|A|$ of the second fundamental form is constant,
    then 
    $$ \sqrt{\frac{m-1}{m}}\frac{\textup{Volume}(\Sigma)}{\textup{Volume}(S^{m+1})} \leq |A| \leq  \frac{C(m)\textup{Volume}(\Sigma)}{\textup{Volume}(S^m)}\ ,$$
\end{cor}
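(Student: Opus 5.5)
The upper bound in the statement is exactly Theorem \ref{curvature upper bound}, which applies because $\Sigma$ is embedded and $|A|$ is constant. So the only new content is the lower bound
\begin{align}
\sqrt{\frac{m-1}{m}}\,\frac{\textup{Volume}(\Sigma)}{\textup{Volume}(S^{m+1})}\leq |A|. \notag
\end{align}
The plan is to obtain it from Corollary \ref{area curvature}, stated later in the paper, which relates the area of $\Sigma$ to its curvature. Because $|A|$ is constant, $|A|_{\max}=|A|$, so that area--curvature inequality specializes directly to the displayed inequality. The non-totally-geodesic hypothesis guarantees $|A|\geq\sqrt{m}>0$, so nothing degenerates. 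Concatenating the two inequalities then gives the corollary.

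If I had to supply the area--curvature inequality myself, I would argue through tubular neighbourhoods, using Theorem \ref{rolling}. The steps would be as follows.

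\begin{enumerate}
\item Bound the principal curvatures. Minimality gives $\sum_i k_i=0$, and together with $\sum_i k_i^2=|A|^2$ this yields $\kappa:=\max_i|k_i|\leq\sqrt{\frac{m-1}{m}}\,|A|$.
\item Bound the focal distance. It follows that the focal distance on each side of $\Sigma$ is at least $d=\cot^{-1}\kappa$.
\item Set up the volume comparison. By Theorem \ref{rolling}, the map $(p,t)\mapsto\exp_p(t\mathbf n(p))$ is injective on $\Sigma\times(-d,d)$. Hence
\begin{align}
\textup{Volume}(S^{m+1})\geq\int_\Sigma\int_{0}^{d}\Big(\prod_i(\cos t-k_i\sin t)+\prod_i(\cos t+k_i\sin t)\Big)\,dt. \notag
\end{align}
\item Bound the $t$-integral from below. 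I would write each product as $\cos^m t\prod_i(1\mp k_i\tan t)$. Since $\sum_i k_i=0$, the linear terms cancel in the sum of the two products. I would then aim for a pointwise lower bound on the $t$-integral of order $1/\kappa$.
\end{enumerate}

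The main obstacle is step 4: extracting precisely the constant $1/\kappa$ from the $t$-integral. The crude bound $\prod_i(\cos^2t-k_i^2\sin^2t)^{1/2}\geq(\cos^2t-\kappa^2\sin^2t)^{m/2}$ loses a dimensional factor. In that case I would instead rely on the paper's Corollary \ref{area curvature} as stated.
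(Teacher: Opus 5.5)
Your proposal is correct and is essentially the paper's argument. The upper bound is Theorem \ref{curvature upper bound}. The lower bound is the second inequality of Corollary \ref{area curvature}, where $\max\{|A|_{\max},\sqrt{m}\}=|A|$ because $|A|$ is constant and $|A|\geq\sqrt{m}$ by Simons' theorem when $\Sigma$ is not totally geodesic. Your tubular-neighbourhood sketch is not needed: the paper obtains Corollary \ref{area curvature} by taking $\phi=1$ in the cut-off argument of Proposition \ref{function in tubular}, not by a direct volume comparison.
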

where $C(m)$ is the constant in Theorem \ref{curvature upper bound}.

To prove Theorem \ref{curvature upper bound}, we recall the definition of the Dirichlet-to-Neumann map.
For the  region $\Omega_1 $,  the Dirichlet-to-Neumann  map $\wedge_1: C^{\infty}(\Sigma)\rightarrow C^{\infty}(\Sigma)$ is defined by
$$ \wedge_1 g= -\langle \nabla(\mathcal{H}_1g ),\mathbf{n}\rangle ,$$ where  $\mathbf{n}$ is the inward-pointing unit normal vector field and $\mathcal{H}_1 g$
is the harmonic extension of $g$ to the interior of $\Omega_1$.
For the region $\Omega_2$, we denote this map by $\wedge_2$. Then 
$$\wedge_2 g=\langle \nabla(\mathcal{H}_2g ),\mathbf{n}\rangle. $$
The Steklov eigenvalues constitute  the  spectrum of the Dirichlet-to-Neumann map. Here, considering that in the proof of the main theorem, there are two domains and two harmonic functions that are identical on $\Sigma$, we need the the following map:
$$\wedge =\wedge_1 +\wedge_2.$$
For convenience, we still refer to it  as the Dirichlet-to-Neumann map, and the corresponding 
eigenvalues are still called the Steklov eigenvalues. 
 The map $\wedge$ also appears in \cite[Example~2.22]{Girouard2024}.

 A standard variational principle for the first nonzero Steklov eigenvalue (of $\wedge$)  is given by 
 $$\tau_1=\inf\limits_{g\in C^1 (\Sigma), \ \int_{\Sigma} g =0 } \frac{\int_{\Omega_1}|\nabla (\mathcal{H}_1 g)|^2+\int_{\Omega_2}|\nabla (\mathcal{H}_2 g)|^2}{\int_{\Sigma} g^2 }.$$
From \eqref{definition}, we know that the integral of each coordinate function of $\Sigma$ is zero. Then, applying the variation characterization for $\tau_1$ to each coordinate function of $\Sigma$ yields directly 
\begin{align}
  \tau_1 \leq   \frac{(m+1)\textup{Volume}(S^{m+1})}{\textup{Volume}(\Sigma)}.\label{steklov upper bound}
\end{align}
With these preparations, we have the following theorem , see also  Theorem \ref{steklov'}.
\begin{thm}\label{steklov}
Let $F:\Sigma^m \rightarrow S^{m+1} (m\geq 2)$ be a minimal embedding. Then the first nonzero Steklov eigenvalue $\tau_1$ ( of the map $\wedge$ )  satisfies
\begin{align}
\tau_1 \geq \frac{\textup{Volume}(S^m)}{D(m)\textup{Volume}(\Sigma)}\ ,  \notag  
\end{align}
where $$D(m)=\frac{1}{\sin(\delta_m)}+(m+1)\delta_m \ ,\ \sin^2(\delta_m)= \frac{2}{\sqrt{4(m+1)^2+1}+1} \ , \ 0<\delta_m<\frac{\pi}{2}.$$
\end{thm}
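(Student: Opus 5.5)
The plan is to glue the two harmonic extensions into one $H^1$ function on the whole sphere and reduce the Steklov bound to a trace inequality on $S^{m+1}$. Fix $g\in C^1(\Sigma)$ with $\int_\Sigma g=0$, and let $u=\mathcal H_1 g$ and $v=\mathcal H_2 g$. Define $w$ on $S^{m+1}$ by $w=u$ on $\Omega_1$ and $w=v$ on $\Omega_2$. Since $u|_\Sigma=v|_\Sigma=g$, $w$ is Lipschitz on $S^{m+1}$ and
$$\int_{S^{m+1}}|\nabla w|^2=\int_{\Omega_1}|\nabla u|^2+\int_{\Omega_2}|\nabla v|^2 .$$
Let $c$ be the mean of $w$ over $S^{m+1}$. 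Because $\int_\Sigma g=0$, we have $\int_\Sigma g^2\le\int_\Sigma (g-c)^2$. So it suffices to prove
$$\int_\Sigma (w-c)^2\le D(m)\,\frac{\mathrm{Volume}(\Sigma)}{\mathrm{Volume}(S^m)}\int_{S^{m+1}}|\nabla w|^2 .$$
For the bulk term I will use the Poincaré inequality on $S^{m+1}$ (first eigenvalue $m+1$):
$$\int_{S^{m+1}}(w-c)^2\le\frac{1}{m+1}\int_{S^{m+1}}|\nabla w|^2 .$$

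The core step is a trace inequality for $h=w-c$, proved by integral geometry. I will integrate over the space of unit-speed great circles, i.e. over the unit tangent bundle $US^{m+1}$ with its Liouville measure. There are two inputs:

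\begin{enumerate}
\item[(a)] The Crofton/coarea formula: integrating over all great circles the sum of $\phi$ at the intersection points with $\Sigma$, each weighted by $|\sin|$ of the angle between the circle and $\Sigma$, gives a dimensional constant times $\int_\Sigma\phi$.
\item[(b)] Santaló's formula (Fubini on $US^{m+1}$): integrating $\int_\gamma\psi$ over all great circles gives a dimensional constant times $\int_{S^{m+1}}\psi$.
\end{enumerate}

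On a single great circle $\gamma$ crossing $\Sigma$ at $p=\gamma(0)$, the one-dimensional fundamental theorem plus Cauchy--Schwarz on a segment of length $\delta$ gives, for $0<t\le\delta$,
$$h(p)^2\le 2h(\gamma(t))^2+2\delta\int_0^\delta|\nabla w|^2(\gamma(s))\,ds .$$
I will average this over $t\in(0,\delta)$, but with a weight chosen so that the segment term becomes a $\sin\delta$-type quantity rather than $1/\delta$. Integrating along the circle starting from the crossing point $p$ naturally produces factors of $\sin$ of arclength, which is where I expect $1/\sin\delta$ to come from. Summing over all crossing points, integrating over the space of circles, and applying (a) and (b) should give
$$\int_\Sigma h^2\le\frac{\mathrm{Volume}(\Sigma)}{\mathrm{Volume}(S^m)}\Big(\frac{a}{\sin\delta}\int_{S^{m+1}}h^2+b\,\delta\int_{S^{m+1}}|\nabla w|^2\Big).$$
Here $\mathrm{Volume}(\Sigma)/\mathrm{Volume}(S^m)$ enters through the average number of intersections of a great circle with $\Sigma$, which Crofton's formula computes. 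Embeddedness is used only so that $w$ is well defined and continuous across $\Sigma$; minimality is not needed.

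Inserting the Poincaré inequality turns the bracket into $\big(\tfrac{a}{(m+1)\sin\delta}+b\delta\big)\int|\nabla w|^2$. After normalizing constants this should be proportional to $\tfrac1{\sin\delta}+(m+1)\delta$, up to rearranging. Choosing $\delta=\delta_m$ by optimizing in $\delta$ should give $D(m)$: the condition $\sin^2\delta_m=2/(\sqrt{4(m+1)^2+1}+1)$ looks like the solution of the first-order condition for the minimum. Taking the infimum over $g$ then yields the theorem.

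The main obstacle is the bookkeeping in the integral-geometric step. The constants in Crofton's and Santaló's formulas must be tracked exactly so that precisely $\mathrm{Volume}(\Sigma)/\mathrm{Volume}(S^m)$ appears with coefficient $D(m)$. One must also handle the transversality weight: circles nearly tangent to $\Sigma$ carry a small weight, and I need to check that the weight produced by Crofton's formula cancels against the Jacobian of Santaló's formula. Finally, the segment of length $\delta$ may cross $\Sigma$ again. This is harmless because $w$ is globally $H^1$, so the one-dimensional estimate holds regardless of further crossings. I would first carry this out with the unweighted average in $t$ and then adjust the weight to match the exact form of $D(m)$.
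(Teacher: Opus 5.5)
Your reduction (gluing $\mathcal H_1 g$ and $\mathcal H_2 g$ into a Lipschitz $w$, subtracting the mean $c$, using $\int_\Sigma g^2\le\int_\Sigma (g-c)^2$ and the Poincar\'e inequality on $S^{m+1}$) matches the paper. The core trace step, however, has a genuine gap, and your remark that minimality is not needed is false. Take $\Sigma$ to be a small geodesic sphere of radius $r$. Testing the variational characterization with a degree-one spherical harmonic (mean zero on $\Sigma$), extended linearly inside and by a cutoff on the annulus $r\le\rho\le 2r$ outside, gives $\tau_1\le C(m)/r$. The claimed bound is $\textup{Volume}(S^m)/(D(m)\textup{Volume}(\Sigma))=1/(D(m)\sin^m r)$, which is much larger for $m\ge 2$ and small $r$. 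So no argument using only embeddedness can prove the statement.

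The step that fails is the passage from the sum over crossings to the factor $\textup{Volume}(\Sigma)/\textup{Volume}(S^m)$. After the Santal\'o change of variables, your right-hand side becomes $\int_{S^{m+1}} h^2(z)\,M_\delta(z)\,dz$, and similarly for $|\nabla w|^2$. Here $M_\delta(z)$ is the measure, counted with multiplicity, of unit directions at $z$ whose geodesic meets $\Sigma$ within distance $\delta$. Crofton's formula only controls the integral of $M_\delta$ over $S^{m+1}$, which is proportional to $\delta\,\textup{Volume}(\Sigma)$. What you need is a bound on $\sup_z M_\delta(z)$, because $h^2$ and $|\nabla w|^2$ can concentrate exactly where $\Sigma$ is dense. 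For the small sphere with $r<\delta/2$, $M_\delta(z)\ge \textup{Volume}(S^m)$ at every interior point, while $\textup{Volume}(\Sigma)\to 0$.

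The missing ingredient is a uniform density bound for $\Sigma$ at every center and scale, and this is exactly where minimality enters. The paper uses the monotonicity formula (Proposition \ref{volume growth}), $\int_{B_t(z)\cap\Sigma}\cos\rho\,d\sigma\le \textup{Volume}(\Sigma)\sin^m t$. It comes from $\Delta^\Sigma\langle z,F\rangle=-m\langle z,F\rangle$, and its proof works for any center $z$.

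The paper also does not prove a general trace inequality for $H^1$ functions. Its steps are:
\begin{enumerate}
\item It writes a spherical mean-value identity for $\overline{w}^2$ centered at $y\in\Sigma$, using $\textup{div}(\nabla\cos\rho/\sin^{m+1}\rho)=0$.
\item It uses harmonicity of $w$ off $\Sigma$ and the Steklov jump condition to rewrite $\int_{B_t(y)}\langle\nabla\overline{w}^2,\nabla\cos\rho\rangle$ as a nonpositive bulk term plus $2\tau_1\int_{B_t(y)\cap\Sigma}(\cos\rho-\cos t)\,\overline{w}g$.
\item It integrates over $y\in\Sigma$, swaps the order of integration, and applies monotonicity to both the bulk kernel and the surface kernel.
\end{enumerate}
This produces exactly $\frac{1}{\sin s}+(m+1)s$, and $D(m)$ follows by minimizing in $s$. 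Even if you inserted a monotonicity bound into your integral-geometric scheme, the precise constant $D(m)$ is tied to these specific kernels. Your expectation that the Crofton and Santal\'o bookkeeping normalizes to $\frac{1}{\sin\delta}+(m+1)\delta$ is not supported by anything in the proposal.
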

 \begin{rem}
  Under the particular setting of spheres, only the volume growth of minimal hypersurfaces is used in our proof.
\end{rem}
Combining this theorem and \eqref{steklov upper bound}, we can obtain the upper and lower bounds for the  first Steklov eigenvalue that depend only on $m$ and the volume of $\Sigma$.

The main idea of our proof is to glue together the two harmonic extensions corresponding to the eigenfunction with eigenvalue $\tau_1$ to form a globally Lipschitz function on $S^{m+1}$, and then apply the mean value formula on spheres to  this function. Moreover, different from the  conventional  proof, since our function is not globally smooth,  we must  also deal with  the integral over $\Sigma $. Finally, we integrate this formula over $\Sigma$ and use the volume growth of $\Sigma$ (see Proposition \ref{volume growth} ) to complete the proof. The related details can be found in Section \ref{curvature-steklov}. 
Here, we also have an interesting problem:
\begin{problem}
  Under the assumption of Theorem \ref{steklov}, denote the first Steklov eigenvalues of $\wedge_1$ and $\wedge_2$  by $\tau_1(\Omega_1) $ and $\tau_1 (\Omega_2)$, respectively. From the definition, we know $$\tau_1\geq\tau_1(\Omega_1)+\tau_1 (\Omega_2).$$  Then,  is there  a positive constant $\mathcal{B}$ depending only  on $m$  and $\textup{Volume}(\Sigma)$ such that 
  $$ \min\{\tau_1(\Omega_1),\tau_1(\Omega_2)\} \geq\mathcal{B}\tau_1?$$
\end{problem}
\begin{rem}
    If we relax $\mathcal{B}$ so  that it can depend on $|A|_{\max}$, then this conclusion follows from \cite{ColboisGirouardsteklov2020}.
\end{rem}

The paper is organized as follows. In Section \ref{preliminaries}, we recall the definition of the tubular neighborhood and use it to prove Proposition \ref{function in tubular}. Then, we review the proof of the volume growth of minimal hypersurfaces in spheres; In Sections \ref{computation'}, \ref{main proof} and \ref{curvature-steklov},  we present the proofs of Theorems \ref{computation}, \ref{main}, \ref{curvature upper bound}, \ref{steklov}.

\section{Preliminaries}\label{preliminaries}
Let  $F:\Sigma^{m}  \rightarrow S^{m+1} (m\geq 2) $ be a compact embedding,  where $$S^{m+1}=\{(x_1,\cdots, x_{m+2}) \in \mathbb{R}^{m+2} | x_1^2 +\cdots+ x_{m+2}^2=1   \}.$$
 Let $\langle, \rangle$ and $\cdot$ be the standard Euclidean metric and  dot product, respectively;  let $D,\nabla$ and $\Delta$ be the Levi-Civita connection, gradient and Laplacian on $S^{m+1}$, respectively; and let $D^{\Sigma},\nabla^{\Sigma}$and $\Delta^{\Sigma} $ (with respect to the induced metric) be the Levi-Civita connection, gradient and Laplacian on $\Sigma$, respectively.  The norm of tensors is denoted by $| \cdot |$ and the inner product of tensors is still denoted by $\langle , \rangle$.

 Identify $\Sigma $ with its image $F(\Sigma)$. According to differential topology, $F(\Sigma) $ divides $S^{m+1} $ into two connected regions $\Omega_1 $ and $\Omega_2$  such that $\Sigma =\partial \Omega_1 =\partial \Omega_2$. For the region  $\Omega_1$ in  $S^{m+1}$, we denote by  $\mathbf n$  the inward-pointing  unit normal vector field on $\Sigma$. The corresponding 
\textbf{second fundamental form} is defined by  
$$A(\eta_1,\eta_2) =\langle D_{\eta_1} \eta_2,\mathbf{n} \rangle ,\eta_1,\eta_2 \in \Gamma(T\Sigma) ,$$ where $\Gamma(T\Sigma)$ is the set of all smooth vector fields on  $\Sigma$. The \textbf{shape operator}  is given by $B(\eta)=- D_{\eta} \mathbf{n} $ for $\eta \in \Gamma(T\Sigma)$. The \textbf{principal curvatures} are the eigenvalues $\mu_1 \geq \mu_2 \geq \cdots \geq \mu_{m} $ of the operator $B$. Since $\Omega_1$ is a region in the unit sphere, the \textbf{focal points}  of $\Sigma$ are given by 
$$\pm\left[\cos (\cot^{-1} \mu_1 )F +\sin (\cot^{-1} \mu_1 )\ \mathbf{n}\right],\cdots , \pm[\cos (\cot^{-1} \mu_m )F +\sin (\cot^{-1} \mu_m )\ \mathbf{n}],$$ and the corresponding \textbf{focal distance} is $\cot^{-1} \mu_1$.\footnote{$\cot^{-1} \mu_1$ is $\arctan \frac{1}{ \mu_1}$ for $\mu_1 >0$, $\frac{\pi}{2}$ for $\mu_1 =0$ and $\pi+\arctan \frac{1}{ \mu_1}$ for $\mu_1 <0$.}
We use $ \frac{\sum\limits_{i=1}^{m} \mu_i }{m} $ for the \textbf{mean curvature } $H$ and $\sum\limits_{i=1} ^{m} \mu_i ^2$ for the norm square $|A|^2$ of the second fundamental form. Since the case of $\Omega_2$ differs from that of $\Omega_1$  only by  a sign, it suffices to  discuss  $\Omega_1$ in the rest of this section.
\bigskip

Next, we only consider the case that $F: \Sigma^{m}  \rightarrow S^{m+1} $ is minimal $(i.e.,H=0 )$, in which 
\begin{align}
  \Delta^{\Sigma}F =-m F.\notag
\end{align}

First, we state  the following rolling theorem, which was proven by Howard \cite[Theorem~3]{Howard1999rolling}. The  theorem plays a key role  in  later eigenvalue  estimates. Here we only state the  special case of \cite{Howard1999rolling}. 
\begin{thm}[\text{\cite{Howard1999rolling}}]\label{rolling}
The (rolling or normal injectivity) radius $d_0$  of   $\Omega_1$ is $\min\limits_{\Sigma}\cot^{-1} \mu_1$ $\in (0,\frac{\pi}{2}]$, where $\mu_1$ is the largest  principal curvature  of $\Sigma$ with respect to $\mathbf{n}$ and $\mu_1$ is a continuous function on $\Sigma$ (hence it can attain a  maximum).
\end{thm}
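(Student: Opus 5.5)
The claim to establish is Theorem \ref{rolling}: the rolling radius of $\Omega_1$ equals $\min_\Sigma \cot^{-1}\mu_1$. Write $r_0=\min_\Sigma\cot^{-1}\mu_1$ and let $d_0$ be the supremum of $r$ such that every point $p\in\Sigma$ admits a closed geodesic ball of radius $r$ in $\overline{\Omega_1}$ touching $\Sigma$ at $p$. The ball's center is then forced to be $\exp_p(r\mathbf n(p))$. The proof proves the two inequalities separately.

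\emph{First, $d_0\le r_0$.} This is a local comparison. Suppose the ball $B$ of radius $r$ centered at $c=\exp_p(r\mathbf n)$ lies in $\overline{\Omega_1}$ and touches $\Sigma$ at $p$. Then $\Sigma$ lies on the outside of $\partial B$ near $p$, and the two hypersurfaces are tangent at $p$. The geodesic sphere $\partial B$ has all principal curvatures equal to $\cot r$ with respect to the inward normal $\mathbf n$. The standard second-order comparison of tangent hypersurfaces then gives $\mu_i(p)\le\cot r$ for every $i$. Since $\cot$ is decreasing on $(0,\pi)$, this means $r\le\cot^{-1}\mu_1(p)$, and taking the infimum over $p$ yields $d_0\le r_0$. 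The same reasoning also shows $r_0\le\pi/2$. By minimality $\sum\mu_i=0$, so $\mu_1\ge0$, and therefore $\cot^{-1}\mu_1\le\pi/2$. Continuity of $\mu_1$ holds because it is the largest eigenvalue of a continuous symmetric operator, so the minimum $r_0$ is attained. Moreover $r_0>0$ because $\mu_1$ is bounded on the compact $\Sigma$.

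\emph{Second, $d_0\ge r_0$.} This is the main obstacle, and the argument follows Howard's. Fix $r<r_0$ and define $\Phi(p,t)=\exp_p(t\mathbf n(p))$ on $\Sigma\times[0,r]$. Because $t<\cot^{-1}\mu_i$ for every $i$, the Jacobi field computation $d\Phi=\cos t\,\mathrm{Id}-\sin t\,B$ on tangent directions shows $\Phi$ is a local diffeomorphism. The plan is to show that the ball $B_r(\Phi(p,r))$ meets $\Sigma$ only at $p$. Argue by contradiction using a continuity argument in $r$. For small $r$ the statement holds by compactness and embeddedness, via the tubular neighborhood theorem. Let $r^*$ be the first radius at which it fails. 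At $r^*$ there are two possibilities. One is that $\partial B_{r^*}$ touches $\Sigma$ at a second point $q\ne p$ (first contact). Then $c$ has two minimizing segments to $\Sigma$, both normal, and a standard argument shows this cannot be the first failure unless the segments come from a degenerate configuration. Howard handles this by maximizing the distance function and using that at a first-contact time a cut point that is not focal must have at least two minimizing geodesics, which produces an earlier radius with an overlap. The other possibility is that contact is infinitesimal at $p$, meaning $c$ is a focal point; this is excluded since $r^*<r_0$.

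I expect the delicate step to be converting ``the ball meets $\Sigma$ again'' into a contradiction with the non-focality. I would first try the cut-locus characterization. Every point of the cut locus of $\Sigma$ in $\Omega_1$ is either a first focal point along its normal geodesic or the endpoint of two distinct minimizing normal geodesics. In the second case, take $c$ at minimal distance $\rho$ from $\Sigma$ among cut points. First-variation arguments show the two minimizing geodesics from $c$ are opposite, forming a single geodesic meeting $\Sigma$ orthogonally at both ends. Its midpoint structure then gives $\rho\ge r_0$ by the curvature comparison of step one, applied at the endpoints. Hence no cut point occurs at distance $<r_0$. It follows that $\Phi$ is injective on $\Sigma\times[0,r_0)$, and every ball of radius $r<r_0$ fits, so $d_0=r_0$.
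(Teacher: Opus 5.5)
\textbf{There is a genuine gap at the double-normal step.} The paper gives no proof here; it only cites Howard. So your proposal has to stand on its own, and it breaks at the one step that carries the content.

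The following parts are fine:
\begin{itemize}
\item the inequality $d_0\le r_0$;
\item the facts $0<r_0\le\pi/2$ and the continuity of $\mu_1$;
\item the reduction of $d_0\ge r_0$ to excluding a non-focal cut point $c$ at minimal distance $\rho<r_0$, with two opposite minimizing normal segments to points $p\neq q$.
\end{itemize}

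The failure is the closing claim that ``the curvature comparison of step one, applied at the endpoints'' gives $\rho\ge r_0$. That comparison points the wrong way. The ball $B_\rho(c)\subset\overline{\Omega_1}$ touching $\Sigma$ at $p$ and $q$ only yields $\mu_i(p),\mu_i(q)\le\cot\rho$, i.e.\ $\rho\le\cot^{-1}\mu_1(p)$. That is an upper bound on $\rho$, fully consistent with $\rho<r_0$.

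Your argument uses $H=0$ only to get $\mu_1\ge 0$, and without more the statement is false. A biconcave (red-blood-cell shaped) domain has all $\mu_i$ bounded by $C/R$, yet its central neck can have arbitrarily small half-width $\delta$. Its rolling radius $\delta$ is realized by exactly such a double normal. Your earlier ``first failure'' continuity sketch does not address this either.

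\textbf{What is missing} is an argument that uses minimality together with the positive curvature of $S^{m+1}$ to rule out the short double normal. Here is one way to do it.

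\begin{itemize}
\item \emph{Parallel hypersurfaces are strictly mean convex.} Write $\theta_i=\cot^{-1}\mu_i$. The level set at distance $t<r_0$ from a sheet of $\Sigma$ has principal curvatures $\cot(\theta_i-t)$ with respect to the normal pointing away from $\Sigma$. Its mean curvature $h(t)=\sum_i\cot(\theta_i-t)$ therefore satisfies $h(0)=mH=0$ and $h'(t)=\sum_i\csc^2(\theta_i-t)>0$. Hence $h(\rho)>0$.
\item \emph{Setup near $c$.} Let $d_1,d_2$ be the local distance functions to the sheets of $\Sigma$ at $p$ and at $q$; these are smooth near $c$ because $\rho<r_0$. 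Injectivity of $\Phi$ on $\Sigma\times[0,\rho)$ forces $\{d_1<\rho\}\cap\{d_2<\rho\}=\emptyset$ near $c$.
\item \emph{Contradiction.} In normal coordinates at $c$, take $\nabla d_1(c)=-\nabla d_2(c)=\partial_{x_{m+1}}$ and write the level sets $\{d_j=\rho\}$ as graphs $x_{m+1}=g_j(x')$. Disjointness of the sublevel sets means $g_1\le g_2$, so $\Delta g_1(0)\le\Delta g_2(0)$. But the mean-curvature signs give $\Delta g_1(0)=h_1(\rho)>0>-h_2(\rho)=\Delta g_2(0)$.
\end{itemize}

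Equivalently, you can use Frankel's second-variation argument.
\begin{itemize}
\item The double normal $\gamma$ locally minimizes length among curves from $\Sigma$ near $p$ to $\Sigma$ near $q$. The midpoint of any shorter such chord would lie in both sublevel sets, which is impossible.
\item Summing the index form over $m$ parallel orthonormal normal fields gives $-\int_\gamma \mathrm{Ric}(\gamma',\gamma')=-2m\rho<0$. The endpoint terms are $\pm mH=0$.
\end{itemize}

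With this step supplied, your cut-locus outline does yield $d_0=r_0$.
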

 The theorem  means that the  map 
\begin{align}
 (p,r) \rightarrow \exp^{\perp} (r \mathbf{n} ) =\cos r F (p) +\sin r \ \mathbf{n} (p)\notag
\end{align} 
is a diffeomorphism from $ \Sigma \times [0,d_0)$ to $\exp^{\perp} \Big( \Sigma \times [0,d_0)\Big)\subset \Omega_1$.

 Under this map, the volume form  of the tubular neighborhood can be, up to a sign with the standard volume form of $S^{m+1}$, written as
 \begin{align}
\prod\limits_{i=1} ^{m}(\cos r-\mu_i \sin r) dr \wedge d\sigma \notag,
 \end{align}
where $d\sigma$ is the volume element of $\Sigma$.

In addition, when a function is restricted to the  tubular neighborhood, it can be regarded as a function of $r$ and $\Sigma$.

So  we have the following proposition:
\begin{prop} \label{function in tubular}
If $\phi$ is a smooth nonnegative function on $\Omega_1$, then 
\begin{align}
\int_{\Sigma} \phi \ d\sigma \leq 2\sqrt{\frac{m}{m-1}}\cdot\max\{|A|_{\max}, \sqrt{m} \}\int_{\Omega_1} \phi +\int_{\Omega_1} |\nabla \phi|\notag,
\end{align}
where $|A|_{\max}=\max\limits_{\Sigma} |A|$.
\end{prop}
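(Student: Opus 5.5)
The plan is to work in the tubular coordinates $(p,r)\in\Sigma\times[0,d_0)$ given by Theorem \ref{rolling}. For each $p\in\Sigma$ we integrate $\phi$ along the normal geodesic $r\mapsto \cos r\,F(p)+\sin r\,\mathbf n(p)$ against a weight $\psi(p,r)$ with $\psi(p,0)=1$ and $\psi(p,\delta)=0$. Here $\delta\in(0,d_0)$ is chosen depending only on $m$ and $|A|_{\max}$. For each $p$,
\begin{align}
\phi(p,0)=-\int_0^{\delta}\partial_r\big(\psi\phi\big)\,dr\leq \int_0^{\delta}\big(\psi\,|\nabla\phi|+|\partial_r\psi|\,\phi\big)\,dr,\notag
\end{align}
using $|\partial_r\phi|\le|\nabla\phi|$ and $\phi\ge0$. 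We then integrate over $\Sigma$ with $d\sigma$ and compare with the volume element $J(p,r)\,dr\,d\sigma$, where $J=\prod_{i}(\cos r-\mu_i\sin r)$.

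The coefficient in front of $\int_{\Omega_1}|\nabla\phi|$ must be exactly $1$. The natural choice $\psi=1-r/\delta$ does not give this, because minimality makes $J\le1$: indeed $\sum_i\log(1-\mu_i\tan r)\le-\tan r\sum_i\mu_i=0$. So I would take the weight adapted to the volume element,
$$\psi(p,r)=J(p,r)\Big(1-\frac r\delta\Big).$$
Then $\psi\le J$, so the gradient term is bounded by $\int_{\Omega_1}|\nabla\phi|$. We also have
$$|\partial_r\psi|\le \Big(|\partial_r\log J|+\frac1\delta\Big)J,$$
so it remains to bound $|\partial_r\log J|$ on $[0,\delta]$ by a constant times $\max\{|A|_{\max},\sqrt m\}$. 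Restricting to $r\le\delta<d_0$ and using the tubular neighbourhood $\{r<\delta\}\subset\Omega_1$, we get
$$\int_\Sigma\phi\le\int_{\Omega_1}|\nabla\phi|+\sup\Big(|\partial_r\log J|+\tfrac1\delta\Big)\int_{\Omega_1}\phi .$$

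The key step is the pointwise estimate of
$$\partial_r\log J=-\sum_i\frac{\sin r+\mu_i\cos r}{\cos r-\mu_i\sin r}.$$
We will use that, for a minimal hypersurface, the tracelessness of $A$ gives $\mu_1\le\sqrt{\frac{m-1}{m}}\,|A|$ (and similarly for $|\mu_m|$). Set $K=\sqrt{\frac{m-1}{m}}\max\{|A|_{\max},\sqrt m\}$ and choose $\delta$ with $\tan\delta=\frac{1}{cK}$ for a suitable absolute $c$ (e.g.\ $c=2$), so that $\delta<d_0=\min\cot^{-1}\mu_1$. Note that $K\ge\sqrt{m-1}\ge1$ makes $\delta$ small enough that $\cos r$ stays bounded below.

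Expanding $\frac{1}{1-\mu_i t}$ with $t=\tan r$, and using $\sum\mu_i=0$, the linear terms cancel. What remains is of order $m\tan r+|A|^2\tan r$, which is controlled by $K$ on $[0,\delta]$ since $|A|^2\tan\delta\lesssim |A|^2/K\lesssim K\sqrt{m/(m-1)}$. Combined with $\frac1\delta\approx cK$, the total coefficient becomes $C\sqrt{\frac{m}{m-1}}\max\{|A|_{\max},\sqrt m\}$.

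The main obstacle is the bookkeeping needed to get exactly the constant $2\sqrt{m/(m-1)}$. I expect to have to balance the choice of $\delta$ against the $1/\delta$ term and the $\partial_r\log J$ term. I would first try the cruder route of simply dropping the nonpositive part of $\partial_r\psi$, i.e.\ bounding only $(\partial_r\psi)^-$. I would also check whether the sign structure of $\partial_r J$ (note $\partial_rJ(0)=-\sum\mu_i=0$ and $J$ is decreasing near $0$) allows $|\partial_r\psi|\le J/\delta$ alone, so that the constant comes purely from $1/\delta$ with $\tan\delta$ comparable to $\tfrac12\sqrt{(m-1)/m}/\max\{|A|_{\max},\sqrt m\}$.
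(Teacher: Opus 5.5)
Your framework matches the paper's. The paper also integrates $-\partial_r$ of $\phi$ times a weight $J\cdot(\text{cutoff})$ along normal geodesics of length below $d_0$; its cutoff is $(\cos r-\cot\tilde d_0\sin r)^k$. It also uses $\sum_i\mu_i=0$ in the exact form $-\partial_r\log J=\sum_i\frac{(1+\mu_i^2)\sin r}{\cos r-\mu_i\sin r}$.

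The gap is that you never establish the constant $2\sqrt{m/(m-1)}$, which is the whole content of the proposition, and both devices you suggest for it fail.

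\emph{No sign cancellation.} The identity shows $J$ is non-increasing on $[0,d_0)$, so $\partial_r\psi=(1-\frac r\delta)\partial_rJ-\frac J\delta\le0$ everywhere. Discarding a positive part therefore gains nothing, and $|\partial_r\psi|\le J/\delta$ is false.

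\emph{The crude coefficient is too large.} The quantity you actually write down, $\sup_{[0,\delta]}|\partial_r\log J|+\frac1\delta$, exceeds the target for every $\delta$. Take the Clifford torus $S^{m-1}\big(\sqrt{(m-1)/m}\big)\times S^1\big(\sqrt{1/m}\big)$, oriented so that $\mu_1=\sqrt{m-1}$. Then $|A|^2=m$ and the target coefficient is $2m/\sqrt{m-1}$. Setting $s=\sqrt{m-1}\tan\delta\in(0,1)$ gives
\[
\sup_{[0,\delta]}|\partial_r\log J|+\frac1\delta\;\ge\;\frac{m}{\sqrt{m-1}}\Big(\frac{s}{1-s}+\frac{m-1}{m}\cdot\frac1s\Big)\;\ge\;2\sqrt m+\sqrt{m-1},
\]
which exceeds $2m/\sqrt{m-1}$ for every $m\ge3$ (about $3\sqrt m$ against $2\sqrt m$).

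The missing point is that the cutoff must also damp the $\partial_r\log J$ term. Its bound $\frac{(|A|_{\max}^2+m)\tan r}{1-K\tan r}$ blows up toward the focal distance, so the factor $(1-\frac r\delta)$ cannot be replaced by $1$ there. The paper handles this by choosing $k$ with $k(k-1)\cot^2\tilde d_0=|A|_{\max}^2+m+k$. This makes the total coefficient $\varphi(r)$ non-increasing, so its supremum is $\varphi(0)=k\cot\tilde d_0$.

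Your linear cutoff can be completed in the same spirit. Set $\hat M=\max\{|A|_{\max},\sqrt m\}$ and $K=\sqrt{(m-1)/m}\,\hat M$. Then:
\begin{itemize}
\item keep the factor, writing $|\partial_r\psi|=J\big[(1-\frac r\delta)(-\partial_r\log J)+\frac1\delta\big]$;
\item use $1-\frac r\delta\le1-\frac{\tan r}{\tan\delta}$ (convexity of $\tan$) and $|A|^2+m\le2\hat M^2$;
\item take $\tan\delta=\frac{3}{4K}$, so that $\delta<\cot^{-1}K\le d_0$.
\end{itemize}
With $s=K\tan r\in[0,\frac34]$, the first term is at most $2\sqrt{\frac{m}{m-1}}\,\hat M\max_s\frac{s(1-4s/3)}{1-s}=\frac23\sqrt{\frac{m}{m-1}}\,\hat M$. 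For the second, $\arctan x\ge x-\frac{x^3}{3}$ together with $K^2\ge m-1\ge\frac{3m}{16}$ gives $\frac1\delta\le\frac43\sqrt{\frac{m}{m-1}}\,\hat M$. The coefficient is therefore at most $2\sqrt{\frac{m}{m-1}}\,\hat M$, as required. Without a computation of this kind, the proposal only proves the inequality with an unspecified constant.
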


\begin{proof}
First, fix $0 < \tilde{d_0} \leq d_0  $ and $k\geq 1$ such that $\tilde{d_0}< \frac{\pi}{2}$. For notational convenience, we denote $\prod\limits_{i=1} ^{m}(\cos r-\mu_i \sin r) \ d\sigma $ by $d\sigma_r$. 
Then 
\begin{align}
\int_{\Sigma} \phi \ d\sigma =& -\int_{0}^{\tilde{d_0}} \left( \int_{\Sigma}\phi (\cos r-\cot \tilde{d_0} \sin r)^{k} d\sigma_r \right)' dr \notag\\
=&\int_{0}^{\tilde{d_0}} dr\int_{\Sigma} (\cos r-\cot \tilde{d_0} \sin r)^{k}
\left(\phi \sum\limits_{i=1}^{m} \frac{\sin r +\mu_i \cos r}{\cos r-\mu_i \sin r}-\phi_{r}\right)  d\sigma_r \notag \\
& +k \int_{0}^{\tilde{d_0}} dr\int_{\Sigma} \phi (\cos r-\cot \tilde{d_0} \sin r)^{k-1}(\sin r + \cot\tilde{d_0} \cos r )d\sigma_r \notag\\
\leq &\int_{0}^{\tilde{d_0}} dr\int_{\Sigma} (\cos r-\cot \tilde{d_0} \sin r)^{k}
\left(\phi \sum\limits_{i=1}^{m} \frac{\sin r +\mu_i \cos r}{\cos r-\mu_i \sin r}+|\nabla \phi| \right)  d\sigma_r \notag \\
& +k \int_{0}^{\tilde{d_0}} dr\int_{\Sigma} \phi (\cos r-\cot \tilde{d_0} \sin r)^{k-1}(\sin r + \cot\tilde{d_0} \cos r )d\sigma_r\notag\\
\leq & \int_{\Omega_1}|\nabla \phi|+ \int_{0}^{\tilde{d_0}} dr\int_{\Sigma} \phi(\cos r-\cot \tilde{d_0} \sin r)^{k}
 \sum\limits_{i=1}^{m} \frac{\sin r +\mu_i \cos r}{\cos r-\mu_i \sin r}   d\sigma_r \notag \\
&+ k \int_{0}^{\tilde{d_0}} dr\int_{\Sigma} \phi (\cos r-\cot \tilde{d_0} \sin r)^{k-1}(\sin r + \cot\tilde{d_0} \cos r )d\sigma_r.
\label{first estimate phi}
\end{align}
Since $\sum\limits_{i=1}^{m} \mu_i =0,\sum\limits_{i=1}^{m} \mu_i^2=|A|^2$ and $\mu_i \leq \cot d_0 <\cot\tilde{d_0}$,
\begin{align}
     \sum\limits_{i=1}^{m}  \frac{\sin r +\mu_i \cos r}{\cos r-\mu_i \sin r}
     &=\sum\limits_{i=1}^{m} \mu_i +\sum\limits_{i=1}^{m}\frac{(1+\mu_i ^2) \sin r}{\cos r-\mu_i \sin r}=\sum\limits_{i=1}^{m}\frac{(1+\mu_i ^2) \sin r}{\cos r-\mu_i \sin r}\notag\\
     & \leq \frac{(|A|^2+m)\sin r}{\cos r-\cot{\tilde{d_0}} \sin r}. \label{second estimate phi}
     \end{align}    
     Substituting  $\eqref{second estimate phi}$ into \eqref{first estimate phi} gives
\begin{align}
&\int_{\Sigma} \phi \  d\sigma \notag\\
\leq& \int_{0}^{\tilde{d_0}} dr\int_{\Sigma} \phi (\cos r-\cot \tilde{d_0} \sin r)^{k-1}
\left[k(\sin r + \cot\tilde{d_0} \cos r )+(|A|^2+m)\sin r \right] d\sigma_r\notag\\
& +\int_{\Omega_1}|\nabla \phi| \notag\\
\leq & \int_{0}^{\tilde{d_0}} dr\int_{\Sigma} \phi (\cos r-\cot \tilde{d_0} \sin r)^{k-1}
\left[k \cot\tilde{d_0} \cos r +(|A|_{\max}^2+m+k)\sin r \right] d\sigma_r\notag\\
& +\int_{\Omega_1}|\nabla \phi|.\label{third estimate phi}
\end{align}
Let
$$ \varphi (r)=(\cos r-\cot \tilde{d_0} \sin r)^{k-1}
\left[k \cot\tilde{d_0} \cos r +(|A_{\max} ^2+m+k)\sin r \right].$$
We need to estimate the maximum of $\varphi$ on the interval $\big[0,\tilde{d_0}\big]$.

 Taking  the derivative of $\varphi$, we  have 
\begin{align}
  \varphi '= & -(k-1) (\cos r-\cot \tilde{d_0} \sin r)^{k-2}(\sin r +\cot\tilde{d_0} \cos r)\notag\\
  &\times\left[k \cot\tilde{d_0} \cos r +(|A_{\max} ^2+m+k)\sin r \right] \notag \\
  &+(\cos r-\cot \tilde{d_0} \sin r)^{k-1}\left[(|A|_{\max}^2+m+k)\cos r-k \cot \tilde{d_0} \sin r\right]\notag \\
  =& (\cos r-\cot \tilde{d_0} \sin r)^{k-2}\cos ^2 r \bigg\{|A|_{\max}^2+m+k-k(k-1) \cot^2 \tilde{d_0}\notag \\
  & +\left[k \cot^2\tilde{d_0} -(k-1)(|A|_{\max}^2 +m+k) \right] \tan^2 r\notag\\
 & -k\cot\tilde{d_0}(|A|_{\max}^2 +m+2k )\tan r\bigg\}.\notag
  \end{align}
We can see that when  choosing  $k$ such that 
$$ k(k-1) \cot^2\tilde{d_0} =|A|_{\max}^2+m+k ,$$
i.e.,
$$k=\frac{\tan^2 \tilde{d_0}+1}{2}+\sqrt{\frac{(\tan^2 \tilde{d_0}+1)^2 }{4}+\tan ^2\tilde{d_0}(|A|_{\max}^2+m) } \ > 1, $$
we have 
\begin{align}
    \varphi'= &(\cos r-\cot \tilde{d_0} \sin r)^{k-2}\cos ^2 r\tan r \cdot \frac{|A|_{\max}^2+m+k}{k-1}
    \notag\\
    &\times \left[(1-(k-1)^2)\tan r-\tan \tilde{d_0}(|A|_{\max}^2 +m+2k )\right]\notag  \\
    \leq &-(\cos r-\cot \tilde{d_0} \sin r)^{k-2}\cos ^2 r\tan^2 r \cdot \frac{|A|_{\max}^2+m+k}{k-1} (k^2+|A|_{\max}^2+m)\leq 0,\notag
    \end{align}
where we use $-\tan \tilde{d_0} \leq -\tan r$.

Hence, $\varphi$ is non-increasing on $\big[0,\tilde{d_0}\big]$ and 
$$\varphi_{\max}=\varphi (0)=k\cot\tilde{d_0}=
\frac{\tan \tilde{d_0}+\cot\tilde{d_0}}{2}+\sqrt{\frac{(\tan \tilde{d_0}+\cot\tilde{d_0})^2 }{4}+|A|_{\max}^2+m }\ .$$

 Plugging this result into \eqref{third estimate phi} yields
\begin{align}
    \int_{\Sigma} \phi \ d\sigma &\leq \int_{\Omega_1}|\nabla \phi| +
\left(\frac{\tan \tilde{d_0}+\cot\tilde{d_0}}{2}+\sqrt{\frac{(\tan \tilde{d_0}+\cot\tilde{d_0})^2 }{4}+|A|_{\max}^2+m } \ \  \right )\int_{\Omega_1}\phi.\label{forth estimate phi}
    \end{align}
When $\Sigma$ is totally geodesic, we let  $\tilde{d_0}= \frac{\pi}{4}$.
Then 
\begin{align}
    \int_{\Sigma} \phi \ d\sigma &\leq \int_{\Omega_1}|\nabla \phi| +
(1+\sqrt{m+1})\int_{\Omega_1}\phi\ .\label{final estimate 1 phi}
\end{align}

When $\Sigma$ is not totally geodesic, by the work  of \cite{Simons1968minimal}, we have $|A|_{\max}^2 \geq m$. 

Since $$ |A|^2=\sum\limits_{i=1}^{m} \mu_i^2 \geq \mu_1^2 +\frac{\left(\sum\limits_{i=2}^{m} \mu_i\right)^2}{m-1}=\frac{m}{m-1}\mu_1^2,$$

$$\cot d_0=\max\limits_{\Sigma} \mu_1 \leq \sqrt{\frac{m-1}{m}} |A|_{\max}.$$
Let 
$$\tilde{d_0} =\cot^{-1} \left(\sqrt{\frac{m-1}{m}} |A|_{\max}\right) \left(\leq \min\{d_0,\cot^{-1}\sqrt{m-1}\} \leq \min\{d_0,\frac{\pi}{4}\}\right).$$
Then  by \eqref{forth estimate phi},
\begin{align}
\int_{\Sigma} \phi \ d\sigma \leq &\int_{\Omega_1}|\nabla \phi| +\left(\frac{1}{2}\sqrt{\frac{m-1}{m}}|A|_{\max}+\frac{1}{2}\sqrt{\frac{m}{m-1}}\frac{1}{|A|_{\max}} \notag \right.\\
&\left.+\sqrt{ \frac{5m-1}{4m}|A|_{\max}^2+m+\frac{1}{2} +\frac{1}{4} \cdot \frac{m}{m-1} \frac{1}{|A|_{\max}^2}    } \ 
\right)\int_{\Omega_1}\phi\ .\label{final estimate 2 phi}
\end{align}
Since $|A|_{\max} \geq \sqrt{m}$, 
\begin{align}
  &\frac{1}{2}\sqrt{\frac{m-1}{m}}\cdot|A|_{\max}+\frac{1}{2}\sqrt{\frac{m}{m-1}}\cdot\frac{1}{|A|_{\max}}\notag\\
&+\sqrt{ \frac{5m-1}{4m}|A|_{\max}^2+m+\frac{1}{2} + \frac{m}{4(m-1)} \frac{1}{|A|_{\max}^2}} \notag\\
=& |A|_{\max}\Bigg (\frac{1}{2}\sqrt{\frac{m-1}{m}}+\frac{1}{2}\sqrt{\frac{m}{m-1}}\cdot\frac{1}{|A|_{\max}^2}\notag\\
&+\sqrt{ \frac{5m-1}{4m}+ \frac{ m+\frac{1}{2}}{|A|_{\max}^2} + \frac{m}{4(m-1)} \frac{1}{|A|_{\max}^4}}\ \Bigg)\notag\\
\leq & \Bigg(\frac{1}{2}\sqrt{\frac{m-1}{m}}+\frac{1}{2}\sqrt{\frac{1}{m(m-1)}} \notag\\
&+\sqrt{\frac{5m-1}{4m} +1+\frac{1}{2m} +\frac{1}{4m(m-1)}  }   \ \Bigg)|A|_{\max}\notag\\
= &\frac{\sqrt{m}+\sqrt{9m-8}}{2\sqrt{m-1}} |A|_{\max}<2\sqrt{\frac{m}{m-1}}\cdot|A|_{\max},\notag
\end{align}
 We substitute it into \eqref{final estimate 2 phi} and note that $$2\sqrt{\frac{m}{m-1}}\cdot\sqrt{m} >1+\sqrt{m+1}.$$ This  completes the proof of the proposition.
\end{proof}

 Proposition \ref{function in tubular} also applies to $\Omega_2$. If $\phi=1$, then  we immediately have the following corollary:
 \begin{cor}\label{area curvature}
 \begin{align}
\textup{Volume} (\Sigma) \leq & 2\sqrt{\frac{m}{m-1}}\max\{|A|_{\max}, \sqrt{m} \}\min\{\textup{Volme} (\Omega_1), \textup{Volume}(\Omega_2) \}\notag\\
\leq& \sqrt{\frac{m}{m-1}}\max\{|A|_{\max}, \sqrt{m} \}\textup{Volume}(S^{m+1}). \notag
 \end{align}
 
 \end{cor}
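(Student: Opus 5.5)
Corollary \ref{area curvature} follows from Proposition \ref{function in tubular} with the constant function $\phi\equiv 1$, applied once to $\Omega_1$ and once to $\Omega_2$. For this $\phi$ we have $\nabla\phi=0$, so the gradient term disappears and the proposition reads
\begin{align}
\textup{Volume}(\Sigma)=\int_\Sigma 1\, d\sigma \leq 2\sqrt{\frac{m}{m-1}}\max\{|A|_{\max},\sqrt{m}\}\,\textup{Volume}(\Omega_1).\notag
\end{align}
Proposition \ref{function in tubular} also holds for $\Omega_2$. The only change is that the roles of $\mathbf{n}$ and $-\mathbf{n}$ are swapped, which replaces every principal curvature $\mu_i$ by $-\mu_i$. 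The inputs to the proof are unchanged under this sign flip:
\begin{itemize}
\item minimality, $\sum\mu_i=0$;
\item the value of $|A|$;
\item Theorem \ref{rolling}, applied to $\Omega_2$ with its own largest principal curvature $-\mu_m$;
\item the bound $\cot d_0\le \sqrt{\frac{m-1}{m}}|A|_{\max}$, which holds for $\Omega_2$ because the estimate $|A|^2\ge \frac{m}{m-1}\mu^2$ applies to any single principal curvature.
\end{itemize}
The same inequality therefore holds with $\textup{Volume}(\Omega_2)$ in place of $\textup{Volume}(\Omega_1)$. Taking the better of the two bounds gives the first inequality, with $\min\{\textup{Volume}(\Omega_1),\textup{Volume}(\Omega_2)\}$.

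For the second inequality, note that $\Sigma$ has measure zero in $S^{m+1}$ and $S^{m+1}=\Omega_1\cup\Sigma\cup\Omega_2$. Hence $\textup{Volume}(\Omega_1)+\textup{Volume}(\Omega_2)=\textup{Volume}(S^{m+1})$, and so $\min\{\textup{Volume}(\Omega_1),\textup{Volume}(\Omega_2)\}\le \frac12\textup{Volume}(S^{m+1})$. Substituting this halves the constant $2\sqrt{m/(m-1)}$ and gives the stated bound.

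The only point that needs any care is checking that Proposition \ref{function in tubular} really transfers to $\Omega_2$ with the same constant. The paper asserts this, and the list above confirms it, since every ingredient of the proof is invariant under $\mu_i\mapsto-\mu_i$. Regularity is not an issue because $\phi\equiv1$ is smooth up to the boundary.
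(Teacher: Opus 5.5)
Your proposal is correct and follows the paper's own argument: set $\phi\equiv 1$ in Proposition \ref{function in tubular}, use the fact that the proposition transfers to $\Omega_2$ under the sign change $\mu_i\mapsto-\mu_i$, and take the better of the two bounds. For the second inequality you use $\min\{\textup{Volume}(\Omega_1),\textup{Volume}(\Omega_2)\}\le\frac12\textup{Volume}(S^{m+1})$, which the paper leaves implicit.
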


Second, we also need the volume  growth of $\Sigma$, which can be found  in  Brendle \cite[Theorem~2.1]{Brendle2023minimal} or Colding and Minicozzi \cite[p.~24]{colding2011minimal}. Although their versions are minimal hypersurfaces in Euclidean space, with a slight modification, we can realize it  for minimal hypersurfaces in the unit  sphere.

 \begin{prop}\label{volume growth}
For any point $x_0$ in  $\Sigma$, if we denote by  $\rho $  the distance in $S^{m+1}$ to $x_0$ and let $$ B_s(x_0) =\{x\in S^{m+1} | \rho (x) \leq s  \} ,$$
then 
\begin{align}
 \int_{ B_s (x_0) \cap \Sigma} \cos \rho \ d\sigma  \leq\left( \int_{ B_{\frac{\pi}{2}} (x_0) \cap \Sigma} \cos \rho \ d\sigma \right)  \sin ^{m} s,        \  \forall\  0 \leq s < \frac{\pi}{2}.\notag
 \end{align}
\end{prop}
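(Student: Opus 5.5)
The idea is to adapt the Euclidean monotonicity argument: replace the radial vector field $x-x_0$ by a conformal vector field on $S^{m+1}$ adapted to $x_0$. Fix $x_0\in\Sigma$ and view $S^{m+1}\subset\mathbb{R}^{m+2}$. The ambient function $\cos\rho=\langle x,x_0\rangle$ is the restriction of a linear function. Its gradient on the sphere is $\nabla\cos\rho = x_0-\langle x,x_0\rangle x=-\sin\rho\,\nabla\rho$, and it is a conformal field: $D^2\cos\rho=-\cos\rho\, g$.

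Restricting to $\Sigma$ and using minimality (or directly $\Delta^\Sigma F=-mF$ paired with $x_0$) gives
\[
\Delta^\Sigma\cos\rho=-m\cos\rho .
\]
Similarly, $\Delta^\Sigma(1-\cos\rho)=m\cos\rho$. Also $|\nabla^\Sigma\cos\rho|^2=\sin^2\rho\,|\nabla^\Sigma\rho|^2\le\sin^2\rho$.

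Set $B_s=B_s(x_0)$ and define
\[
I(s)=\int_{B_s\cap\Sigma}\cos\rho\,d\sigma .
\]
The goal is the differential inequality $\frac{d}{ds}\bigl(I(s)/\sin^m s\bigr)\ge 0$ for almost every $s\in(0,\pi/2)$. From it, monotonicity on $(s,\pi/2)$ yields $I(s)\le I(\pi/2)\sin^m s$.

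To get it, integrate $\Delta^\Sigma\cos\rho=-m\cos\rho$ over $\Sigma\cap B_s$. For almost every $s$ the boundary $\Sigma\cap\partial B_s$ is smooth by Sard. The divergence theorem gives
\[
mI(s)=-\int_{\Sigma\cap\partial B_s}\langle\nabla^\Sigma\cos\rho,\nu\rangle
=\sin s\int_{\Sigma\cap\partial B_s}|\nabla^\Sigma\rho| ,
\]
with $\nu=\nabla^\Sigma\rho/|\nabla^\Sigma\rho|$ the outer conormal. By the coarea formula,
\[
I'(s)=\cos s\int_{\Sigma\cap\partial B_s}\frac{1}{|\nabla^\Sigma\rho|}\ \ge\ \cos s\int_{\Sigma\cap\partial B_s}|\nabla^\Sigma\rho| .
\]
The inequality holds because $|\nabla^\Sigma\rho|\le1$. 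Combining the two relations gives
\[
I'(s)\ge \frac{m\cos s}{\sin s}\,I(s),
\]
which is exactly $(\log I-m\log\sin s)'\ge0$.

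Two technical points remain. First, handle almost-every $s$ and the absolute continuity of $I$. I would do this either by working with a smooth cutoff $\eta(\rho)$ in place of the indicator, as in Colding–Minicozzi, and then letting $\eta$ tend to the indicator; or by noting that $I$ is monotone nondecreasing, since $\cos\rho>0$ on $B_{\pi/2}$. Its distributional derivative then dominates the absolutely continuous part, so the inequality integrates correctly. Second, near $s=0$ the ratio stays finite, since $I(s)\sim\omega_m s^m$ because $x_0\in\Sigma$, though this is not needed for the stated inequality.

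I expect the main obstacle to be the rigorous integration of the differential inequality across critical values of $\rho|_\Sigma$. I would first try the cutoff-function version: set $\eta(t)$ decreasing, and compute $\operatorname{div}_\Sigma\bigl(\eta(\rho)\nabla^\Sigma\cos\rho\bigr)$. This yields an identity valid for all $s$, and it avoids the coarea subtleties.
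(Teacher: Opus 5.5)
Your argument is correct. It is the differential (coarea) form of the same monotonicity that the paper proves in integrated form.

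The paper never differentiates $I(s)$. It computes directly that
\[
\textup{div}^{\Sigma}\Big(-\frac{\nabla^{\Sigma}\cos\rho}{m\sin^{m}\rho}\Big)=\frac{\cos\rho\,\langle\nabla\rho,\mathbf{n}\rangle^{2}}{\sin^{m}\rho}\ \ge 0 .
\]
It then applies the divergence theorem on $(B_{t_2}\setminus B_{t_1})\cap\Sigma$ for two Sard-regular radii $t_1<t_2$. Using $\Delta^{\Sigma}\cos\rho=-m\cos\rho$ on $B_{t_i}\cap\Sigma$, which is exactly your identity $mI(s)=\sin s\int_{\Sigma\cap\partial B_s}|\nabla^{\Sigma}\rho|$, it converts the two boundary fluxes into $-I(t_i)/\sin^m t_i$. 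The result is $I(t_1)/\sin^m t_1\le I(t_2)/\sin^m t_2$ with an explicit nonnegative remainder. The stated inequality follows by letting $t_1\to s^+$ and $t_2\to(\pi/2)^-$.

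Your pointwise step $1/|\nabla^{\Sigma}\rho|\ge|\nabla^{\Sigma}\rho|$ is the same remainder after the coarea formula, since $1-|\nabla^{\Sigma}\rho|^2=\langle\nabla\rho,\mathbf{n}\rangle^2$.

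What the paper's packaging buys is the removal of what you flagged as the main obstacle. It needs only density of regular values and right-continuity of $I$, which holds because the balls are closed. There is no coarea formula, no absolute-continuity or BV argument, and no treatment of critical values.

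Your route is the classical one, and your measure-theoretic fix does work if you do three things:
\begin{enumerate}
\item Use the coarea formula only in the one-sided form $I(b)-I(a)\ge\int_a^b\cos t\int_{\Sigma\cap\partial B_t}|\nabla^{\Sigma}\rho|^{-1}\,dt$. This is all you need, rather than an equality for $I'$.
\item Conclude that $I/\sin^m s$ has nonnegative distributional derivative on $(0,\pi/2)$.
\item Invoke right-continuity of $I$ to get the inequality for every $s$, not just almost every $s$.
\end{enumerate}
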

\begin{proof}
Since by  \eqref{definition},
\begin{align}
  \Delta ^{\Sigma}  \cos \rho=\Delta^{\Sigma} \langle x_0, F \rangle =-m \langle x_0, F \rangle=-m \cos \rho, \notag
  \end{align}

the divergence of the vector field $ -\frac{\nabla^{\Sigma} \cos \rho}{m \sin^m \rho}  $ is given by 
\begin{align}
 \textup{div} ^{\Sigma}\left(-\frac{\nabla^{\Sigma} \cos \rho}{m\sin^m \rho}\right)=
 \frac{-\Delta ^{\Sigma} \cos \rho}{ m\sin ^m \rho}+\frac{\langle \nabla^{\Sigma}\sin \rho, \nabla^{\Sigma} \cos \rho\rangle}{ \sin^{m+1} \rho}=\frac{\cos \rho \langle \nabla \rho , \mathbf{n} \rangle ^2  }{\sin ^m \rho }.
\end{align}
By Sard's theorem, there exists a dense subset $S$ of $[0,\frac{\pi}{2}]$ such that  $\partial B_{t} (x_0) $ meets  $\Sigma$ transversally for  every $t\in S $. We choose $t_1,t_2 \in S$ such that $t_1< t_2$. Applying the divergence  theorem to $-\frac{\nabla^{\Sigma} \cos \rho}{m \sin^m \rho}$ on $\left(B_{t_2} (x_0) \setminus B_{t_1} (x_0)\right) \cap \Sigma $ gives 
\begin{align}\label{first divergence}
 &\frac{1}{m \sin^m t_1} \int_{\partial B_{t_1}(x_0) \cap \Sigma} \langle\nabla^{\Sigma} \cos \rho, \frac{\nabla^{\Sigma} \rho}{|\nabla^{\Sigma} \rho|} \rangle -  \frac{1}{m \sin^m t_2} \int_{\partial B_{t_2}(x_0) \cap \Sigma} \langle \nabla^{\Sigma} \cos \rho, \frac{\nabla^{\Sigma} \rho}{|\nabla^{\Sigma} \rho|} \rangle\notag\\
 = &\int_{\left(B_{t_2} (x_0) \setminus B_{t_1} (x_0)\right) \cap \Sigma }  \textup{div}^{\Sigma} \left(-\frac{\nabla^{\Sigma} \cos \rho}{m\sin^m \rho}\right) = \int_{\left(B_{t_2} (x_0) \setminus B_{t_1} (x_0)\right) \cap \Sigma } \frac{\cos \rho \langle \nabla \rho , \mathbf{n} \rangle ^2  }{\sin ^m \rho } \geq 0. 
 \end{align}
Applying the divergence theorem to $\nabla^{\Sigma} \cos \rho $ on $B_{t_1} (x_0) \cap \Sigma$ and $B_{t_2} (x_0) \cap \Sigma$ gives
\begin{align}\label{second divergence t_1}
\frac{1}{m \sin^m t_1} \int_{\partial B_{t_1}(x_0) \cap \Sigma} \langle \nabla^{\Sigma} \cos \rho , \frac{\nabla^{\Sigma} \rho}{|\nabla^{\Sigma} \rho|}\rangle  &= \frac{1}{m \sin^m t_1} \int_{ B_{t_1}(x_0) \cap \Sigma}  \Delta^{\Sigma} \cos \rho \notag\\
&= - \frac{1}{ \sin^m t_1} \int_{ B_{t_1}(x_0) \cap \Sigma}   \cos \rho 
\end{align}
and 
\begin{align}\label{second divergence t_2}
\frac{1}{m \sin^m t_2} \int_{\partial B_{t_2}(x_0) \cap \Sigma} \langle \nabla^{\Sigma} \cos \rho , \frac{\nabla^{\Sigma} \rho}{|\nabla^{\Sigma} \rho|} \rangle &= \frac{1}{m \sin^m t_2} \int_{ B_{t_2}(x_0) \cap \Sigma}  \Delta^{\Sigma} \cos \rho \notag\\
&= - \frac{1}{ \sin^m t_2} \int_{ B_{t_2}(x_0) \cap \Sigma}   \cos \rho \ .
\end{align}
Combining \eqref{first divergence}, \eqref{second divergence t_1} and \eqref{second divergence t_2} yields 
\begin{align}
 \frac{1}{ \sin^m t_2} \int_{ B_{t_2}(x_0) \cap \Sigma}   \cos \rho \geq 
 \frac{1}{ \sin^m t_1} \int_{ B_{t_1}(x_0) \cap \Sigma}   \cos \rho. \end{align}
Then the proposition follows by letting $t_1 \rightarrow s ^{+} $  and $t_2 \rightarrow \left(\frac{\pi}{2}\right)^{-}$.

\end{proof}

\section{Proof of Theorem \ref{computation}}\label{computation'}
Let $u $  and $v$ be smooth functions   on  $\Omega_1$ and $\Omega_2$, respectively, and smooth up to the boundary. Let $f$ be a smooth function on $\Sigma$ such that $u|_{\Sigma}=v|_{\Sigma}=f$.
In this section, we always assume that $ \Omega_1$ and $\Omega_2 $  are the regions in $S^{m+1}$ and $\Sigma=\partial \Omega_1=\Omega_2$ is  minimal $(i.e., H=0)$. Our goal is to compute  $\Delta |D^2 u|^2, \Delta |D^2 v|^2$ and $\int_{\Omega_1}\Delta |D^2 u|^2, \int_{\Omega_2}\Delta |D^2 v|^2$. Here, in view of the particular setting of  $S^{m+1}$, we are going to use a  slightly simpler and clearer approach than general computation of derivative of  tensors. Its advantage is that we can directly use Reilly's  formula to compute the integrals $\int_{\Omega_1}\Delta |D^2 u|^2$ and $\int_{\Omega_2}\Delta |D^2 v|^2$ (see Lemma \ref{integral  computations}).  We will use Lemma \ref{some diffential computations}, Proposition \ref{Reilly} and Lemma \ref{integral  computations} to prove Theorem \ref{computation}. Furthermore, before proving Theorem \ref{computation}, for convenience, we still  only consider the case of $\Omega_1$ and the case of $\Omega_2$ is analogous.
We use $\frac{\partial}{\partial x_1},\cdots, \frac{\partial}{\partial x_{m+2}}$ to denote a standard  orthogonal  frame on $\mathbb{R}^{m+2}$.
Then 
$$ D^{\mathbb{R}^{m+2}}\frac{\partial}{\partial x_{\alpha} }=0,$$
$$\nabla u= \sum\limits_{\alpha=1}^{m+2}\langle \nabla u, \frac{\partial}{\partial x_\alpha}\rangle \frac{\partial}{\partial x_\alpha} $$
and 
$$ | \nabla u|^2= \sum\limits_{\alpha=1}^{m+2}\langle \nabla u, \frac{\partial}{\partial x_\alpha}\rangle^2,$$
where $D^{\mathbb{R}^{m+2}}$ is  the Levi-Civita connection of $\mathbb{R}^{m+2}.$

 We denote by $X$ the position vector in $\mathbb{R}^{m+2}$, and by $x_\alpha\  (\alpha=1,\cdots,m+2)$ its coordinate components. Then $X|_{\Sigma}=F$ and 
$$X=\sum\limits_{\alpha=1}^{m+2} x_{\alpha} 
\frac{\partial}{\partial x_{\alpha}}.$$
Furthermore, for any $X \in S^{m+1}$ and any smooth vector field $\eta$ on $S^{m+1}$,
\begin{align}
    \sum\limits_{\alpha=1}^{m+2} x_{\alpha} ^2=1, \langle X, \eta (X) \rangle=0.\notag
    \end{align}

Let $R$   denote  the curvature tensor of $S^{m+1}$. Then  for any smooth vector fields $ \eta_1,\eta_2,\eta_3, \eta_4$ on $S^{m+2}$,
\begin{align}\label{curvature}
 R(\eta_1, \eta_2, \eta_3,\eta_ 4) &= \langle -D_{\eta_1} D_{\eta_2} \eta_3  +D_{\eta_2} D_{\eta_1} \eta_3  +D_{[\eta_1,\eta_2 ]} \eta_3 ,\eta _4\rangle \notag\\
 &=\langle \eta_1 ,\eta_3 \rangle  \cdot \langle \eta_2, \eta_4 \rangle-\langle \eta_1,\eta_4 \rangle \cdot \langle \eta_2, \eta_3 \rangle.
\end{align}

For $ \langle \nabla u,\frac{\partial}{\partial x_{\alpha}} \rangle $, we have
\begin{lem}\label{some diffential computations}
In $\Omega_1$,
\begin{align}\label{gradient computation}
\sum\limits_{\alpha=1}^{m+2} |\nabla \langle \nabla u, \frac{\partial}{\partial x_{\alpha}} \rangle |^2  = |D^2u|^2+|\nabla u|^2,
\end{align}
\begin{align}\label{laplacian computation}
\Delta\langle \nabla u, \frac{\partial}{\partial x_{\alpha}} \rangle =\langle \nabla \Delta u, \frac{\partial}{\partial x_{\alpha}} \rangle-2x_{\alpha} \Delta u +(m-1) \langle \nabla u,\frac{\partial}{\partial x_{\alpha}} \rangle, \forall 1\leq \alpha \leq m+2,
\end{align}
\begin{align}\label{hession computation}
     \sum\limits_{\alpha=1}^{m+2} |D^2\langle \nabla u, \frac{\partial}{\partial x_{\alpha}} \rangle|^2=|D^3 u|^2 +4 |D^2 u|^2 -(m-1)|\nabla u|^2 -2\langle \nabla \Delta u,\nabla u  \rangle.
     \end{align}
\end{lem}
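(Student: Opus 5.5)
The plan is to work with the tangential projections of the constant coordinate fields. Set
$$E_\alpha=\frac{\partial}{\partial x_\alpha}-x_\alpha X ,$$
which is the tangential part of $\frac{\partial}{\partial x_\alpha}$ along $S^{m+1}$. Then $\langle \nabla u,\frac{\partial}{\partial x_\alpha}\rangle=\langle\nabla u,E_\alpha\rangle=:w_\alpha$, since $\langle X,\nabla u\rangle=0$.

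First I record the basic facts about $E_\alpha$. Since $D^{\mathbb{R}^{m+2}}\frac{\partial}{\partial x_\alpha}=0$, projecting $D^{\mathbb{R}^{m+2}}_Y(x_\alpha X)=Y(x_\alpha)X+x_\alpha Y$ tangentially gives $D_YE_\alpha=-x_\alpha Y$. Moreover $E_\alpha=\nabla x_\alpha$. I also use three algebraic identities at each point, which come from $\sum_\alpha x_\alpha^2=1$, the fact that $\sum_\alpha x_\alpha E_\alpha$ is the tangential part of $X$, and $\sum_\alpha \frac{\partial}{\partial x_\alpha}\otimes\frac{\partial}{\partial x_\alpha}=\mathrm{Id}$:
$$\sum_\alpha E_\alpha\otimes E_\alpha=g,\qquad \sum_\alpha x_\alpha E_\alpha=0,\qquad \sum_\alpha x_\alpha^2=1 .$$
These let every sum over $\alpha$ collapse to a trace.

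For \eqref{gradient computation}, I differentiate $w_\alpha$ and obtain $\nabla w_\alpha=D^2u(E_\alpha,\cdot)-x_\alpha\nabla u$. Squaring and summing over $\alpha$ with the three identities, the cross term vanishes because $\sum_\alpha x_\alpha E_\alpha=0$, and what remains is $|D^2u|^2+|\nabla u|^2$.

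For \eqref{laplacian computation}, I expand
$$\Delta w_\alpha=\langle \bar\Delta\nabla u,E_\alpha\rangle+2\langle D\nabla u,DE_\alpha\rangle+\langle\nabla u,\bar\Delta E_\alpha\rangle ,$$
where $\bar\Delta$ denotes the rough Laplacian. The three pieces are handled as follows.
\begin{itemize}
\item By the Bochner commutation with $\mathrm{Ric}=m\,g$ on $S^{m+1}$ (from \eqref{curvature}), $\bar\Delta\nabla u=\nabla\Delta u+m\nabla u$.
\item Using $DE_\alpha=-x_\alpha\,\mathrm{Id}$, the middle term is $\langle D\nabla u,DE_\alpha\rangle=-x_\alpha\Delta u$.
\item Differentiating once more, $\bar\Delta E_\alpha=-\nabla x_\alpha=-E_\alpha$.
\end{itemize}
Adding these gives $\langle\nabla\Delta u,E_\alpha\rangle-2x_\alpha\Delta u+(m-1)w_\alpha$, which is the claimed formula.

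For \eqref{hession computation}, I differentiate $\nabla w_\alpha$ once more. This should give
$$D^2w_\alpha(Z,Y)=D^3u(Z,Y,E_\alpha)-2x_\alpha D^2u(Z,Y)-\langle E_\alpha,Z\rangle\, du(Y).$$
Squaring and summing over $\alpha$, the squared terms are:
\begin{itemize}
\item $|D^3u|^2$ from the first term, using $\sum_\alpha E_\alpha\otimes E_\alpha=g$;
\item $4|D^2u|^2$ from the second term, using $\sum_\alpha x_\alpha^2=1$;
\item $(m+1)|\nabla u|^2$ from the third term, since the trace of $g$ on $S^{m+1}$ is $m+1$.
\end{itemize}
The two cross terms that involve $x_\alpha$ paired with $E_\alpha$ vanish because $\sum_\alpha x_\alpha E_\alpha=0$. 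The remaining cross term is $-2\sum_i (D_{e_i}D^2u)(Y,e_i)\,du(Y)$, which is a divergence of the Hessian. By the contracted Ricci identity, $\mathrm{div}\,D^2u=\nabla\Delta u+m\nabla u$, so this cross term equals $-2\langle\nabla\Delta u,\nabla u\rangle-2m|\nabla u|^2$. Combining gives the coefficient $(m+1)-2m=-(m-1)$ on $|\nabla u|^2$, as claimed.

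The main obstacle is bookkeeping rather than ideas. Specifically, the index slot in which $E_\alpha$ sits in $D^3u$ has to be tracked correctly, and the curvature commutation in the divergence of the Hessian must be applied with the correct sign convention from \eqref{curvature}, since that sign determines whether the coefficient comes out as $-(m-1)$.
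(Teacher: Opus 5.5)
Your proposal is correct and is essentially the paper's argument: you differentiate $\langle \nabla u,\frac{\partial}{\partial x_\alpha}\rangle$ directly, square, and collapse the $\alpha$-sums using $\sum_\alpha x_\alpha^2=1$ and the tangency relations (your $\sum_\alpha E_\alpha\otimes E_\alpha=g$ and $\sum_\alpha x_\alpha E_\alpha=0$ are exactly what the paper uses implicitly), with the curvature of $S^{m+1}$ entering only through commuting covariant derivatives. The one difference is bookkeeping: you keep $E_\alpha=\nabla x_\alpha$ in an undifferentiated slot of $D^3u$, so the single contracted Ricci identity $\sum_i (D_{e_i}D^2u)(\cdot,e_i)=\nabla\Delta u+m\nabla u$ suffices, whereas the paper commutes $\nabla x_\alpha$ into the differentiation slot (which produces the extra terms $\delta_{ij}\langle\nabla u,\nabla x_\alpha\rangle-\langle \frac{\partial}{\partial x_\alpha},\tilde{e_i}\rangle\langle\nabla u,\tilde{e_j}\rangle$) and then must commute back in the cross term of the squared sum.
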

\begin{proof}
It  suffices to show that the conclusion holds for  $\Omega_1\setminus \Sigma$ since $u$ is smooth up the boundary  on $\Omega_1$.

Fix $p \in \Omega_1\setminus \Sigma$ and choose a local orthonormal frame $\{\tilde{e_i}\}_{i=1}^{m+1}$ near $p$
such that 
\begin{align}
 D_{\tilde{e_i}} \tilde{e_j} (p)=0, \ 1\leq i,j\leq m+1 \notag
\end{align}

Then at $p$, for each $i$, we have 
\begin{align}
\tilde{e_i}\langle \nabla u, \frac{\partial}{\partial x_{\alpha}}\rangle
&=\langle D^{\mathbb{R}^{m+2}}_{\tilde{e_i}} \nabla u ,\frac{\partial}{\partial x_{\alpha}}\rangle\notag\\
&=\langle D_{\tilde{e_i}} \nabla u,\frac{\partial}{\partial x_{\alpha}}\rangle + x_{\alpha} \langle D^{\mathbb{R}^{m+2}}_{\tilde{e_i}} \nabla u , X \rangle \notag \\
&=\langle D_{\tilde{e_i}} \nabla u,\frac{\partial}{\partial x_{\alpha}}\rangle - x_{\alpha} \langle D^{\mathbb{R}^{m+2}}_{\tilde{e_i}}  X,\nabla u\rangle \notag \\
&=\langle D_{\tilde{e_i}} \nabla u,\frac{\partial}{\partial x_{\alpha}}\rangle - x_{\alpha} \langle \nabla u, \tilde{e_i}\rangle, \label{first derivate}
\end{align}
which gives 
\begin{align}
&\sum\limits_{\alpha=1}^{m+2} |\nabla \langle \nabla u, \frac{\partial}{\partial x_{\alpha}} \rangle |^2 \notag\\ = &\sum\limits_{\alpha=1}^{m+2}\sum\limits_{i=1}^{m+1} \left(\langle D_{\tilde{e_i}} \nabla u,\frac{\partial}{\partial x_{\alpha}}\rangle - x_{\alpha} \langle \nabla u, \tilde{e_i}\rangle \right)^2 \notag\\
&= \sum\limits_{\alpha=1}^{m+2}\sum\limits_{i=1}^{m+1}\Big(\langle D_{\tilde{e_i}} \nabla u,\frac{\partial}{\partial x_{\alpha}}\rangle^2-2x_{\alpha} \langle D_{\tilde{e_i}} \nabla u,\frac{\partial}{\partial x_{\alpha}}\rangle \langle \nabla u, \tilde{e_i}\rangle  +x_{\alpha}^2 \langle \nabla u, \tilde{e_i} \rangle^2 \Big)\notag\\
&=|D^2u|^2+|\nabla u|^2-2\sum_{i=1}^{m+1} \langle D_{\tilde{e_i}} \nabla u, X \rangle  \langle \nabla u, \tilde{e_i} \rangle\notag\\
&=|D^2u|^2+|\nabla u|^2 \notag.
\end{align}
Fixing $i$ and $j$, at $p$, we have 
\begin{align}
  &\tilde{e_j}\tilde{e_i} \langle \nabla u, \frac{\partial}{\partial x_{\alpha}} \rangle\notag\\=&\tilde{e_j} \left(\langle D_{\tilde{e_i}} \nabla u,\frac{\partial}{\partial x_{\alpha}}\rangle - x_{\alpha} \langle \nabla u, \tilde{e_i}\rangle \right)\label{equ}\\
  =& \tilde{e_j}\langle D_{\tilde{e_i}} \nabla u,\nabla x_{\alpha} \rangle-
  \langle \nabla x_{\alpha},\tilde{e_j} \rangle \langle \nabla u,\tilde{e_i} \rangle -x_{\alpha} \langle D_{\tilde{e_j}} \nabla u,\tilde{e_i} \rangle\notag\\
  =& \tilde{e_j}\langle D_{\nabla x_{\alpha}} \nabla u,\tilde{e_i} \rangle-
  \langle \nabla x_{\alpha},\tilde{e_j} \rangle \langle \nabla u,\tilde{e_i} \rangle -x_{\alpha} \langle D_{\tilde{e_j}} \nabla u,\tilde{e_i} \rangle\notag\\
  =&\langle D_{\tilde{e_j}} D_{\nabla x_{\alpha}} \nabla u,\tilde{e_i} \rangle-\langle \nabla x_{\alpha},\tilde{e_j} \rangle \langle \nabla u,\tilde{e_i} \rangle -x_{\alpha} \langle D_{\tilde{e_j}} \nabla u,\tilde{e_i} \rangle\notag\\
  =& \langle D_{\nabla x_{\alpha}} D_{\tilde{e_j}} \nabla u,\tilde{e_i} \rangle-\langle D_{[\nabla x_{\alpha},\tilde{e_j}]} \nabla u,\tilde{e_i} \rangle +R(\nabla x_{\alpha},\tilde{e_j},\nabla u,\tilde{e_i} ) \notag\\
  &-\langle \nabla x_{\alpha},\tilde{e_j} \rangle \langle \nabla u,\tilde{e_i} \rangle  -x_{\alpha} \langle D_{\tilde{e_j}} \nabla u,\tilde{e_i} \rangle \notag\\
  =& \langle\nabla x_{\alpha}, \nabla \left( D^2 u( \tilde{e_i},\tilde{e_j})\right)\rangle+ \langle D_{D_{\tilde{e_j}} \nabla x_{\alpha}} \nabla u,\tilde{e_i} \rangle+ \delta_{ij} \langle \nabla x_{\alpha} , \nabla u\rangle \notag\\
  &-\langle \nabla x_{\alpha}, \tilde{e_i} \rangle  \langle \nabla u  ,\tilde{e_j}\rangle -\langle \nabla x_{\alpha},\tilde{e_j} \rangle \langle \nabla u,\tilde{e_i} \rangle  -x_{\alpha} \langle D_{\tilde{e_j}} \nabla u,\tilde{e_i} \rangle  \label{ineq} \\
 =& \langle\nabla x_{\alpha}, \nabla \left(D^2 u( \tilde{e_i},\tilde{e_j})\right)\rangle+\sum\limits_{l=1}^{m+1}\langle D_{\tilde{e_j}} \nabla x_{\alpha},\tilde{e_l} \rangle \cdot D^2 u(\tilde{e_i},\tilde{e_l}) + \delta_{ij} \langle \nabla x_{\alpha} , \nabla u\rangle \notag\\
  &-\langle \nabla x_{\alpha}, \tilde{e_i} \rangle  \langle \nabla u  ,\tilde{e_j}\rangle -\langle \nabla x_{\alpha},\tilde{e_j} \rangle \langle \nabla u,\tilde{e_i} \rangle  -x_{\alpha} D^2 u(\tilde{e_i},\Tilde{e_j} )\notag\\
   =& \langle \nabla \left(D^2 u( \tilde{e_i},\tilde{e_j}),  \nabla x_{\alpha}\right)\rangle+\sum\limits_{l=1}^{m+1}\langle D^{ \mathbb{R}^{m+2}}_{\tilde{e_j}} \left(\frac{\partial}{\partial x_{\alpha}}-x_{\alpha} X\right),\tilde{e_l} \rangle \cdot D^2 u(\tilde{e_i},\tilde{e_l})  \notag\\
  &+ \delta_{ij} \langle  \nabla u, \nabla x_{\alpha}\rangle-\langle \nabla x_{\alpha} , \tilde{e_i} \rangle  \langle \nabla u  ,\tilde{e_j}\rangle -\langle \nabla x_{\alpha},\tilde{e_j} \rangle \langle \nabla u,\tilde{e_i} \rangle  -x_{\alpha}D^2 u(\tilde{e_i},\Tilde{e_j} )\notag\\
  =& \langle \nabla \left(D^2 u( \tilde{e_i},\tilde{e_j})\right),  \frac{\partial}{\partial x_{\alpha}}\rangle-2x_{\alpha}D^2 u(\tilde{e_i},\Tilde{e_j})+ \delta_{ij} \langle  \nabla u, \nabla x_{\alpha} \rangle \notag\\
  &-\langle\frac{\partial}{\partial x_{\alpha}}, \tilde{e_i} \rangle  \langle \nabla u  ,\tilde{e_j}\rangle -\langle\frac{\partial}{\partial x_{\alpha}},\tilde{e_j} \rangle \langle \nabla u,\tilde{e_i} \rangle \notag,
  \end{align}
where in \eqref{equ}, we use \eqref{first derivate} and 
in \eqref{ineq}, we use the curvature of $S^{m+1}$ (see \eqref{curvature}):
$$R(\nabla x_{\alpha},\tilde{e_j},\nabla u,\tilde{e_i})=\delta_{ij}\langle \nabla x_{\alpha} ,\nabla u \rangle-\langle \nabla x_{\alpha},\tilde{e_i}\rangle \langle \nabla u,\tilde{e_j} \rangle. $$

This gives 
\begin{align}
\Delta\langle \nabla u, \frac{\partial}{\partial x_{\alpha}} \rangle =&\sum\limits_{i=1}^{m+1}\tilde{e_i}\tilde{e_i} \langle \nabla u, \frac{\partial}{\partial x_{\alpha}} \rangle \notag\\
=& \langle \nabla \Delta u, \frac{\partial}{\partial x_{\alpha}} \rangle-2x_{\alpha} \Delta u +(m+1) \langle \nabla u,\frac{\partial }{\partial x_{\alpha}}\rangle -2\sum\limits_{i=1}^{m+1} \langle \nabla u, 
\nabla x_{\alpha}\rangle \notag\\
=&\langle \nabla \Delta u, \frac{\partial}{\partial x_{\alpha}} \rangle-2x_{\alpha} \Delta u +(m-1) \langle \nabla u,\frac{\partial}{\partial x_{\alpha}} \rangle \notag
\end{align}
and 
\begin{align}
 &\sum\limits_{\alpha=1}^{m+2} |D^2\langle \nabla u, \frac{\partial}{\partial x_{\alpha}} \rangle|^2 \notag\\
 =&\sum\limits_{\alpha=1}^{m+2}\sum\limits_{i,j=1}^{m+1} \left(\tilde{e_j}\tilde{e_i} \langle \nabla u, \frac{\partial}{\partial x_{\alpha}} \rangle\right)^2\notag\\
 =& |D^3 u|^2+ 4|D^2u |+(m+1) |\nabla u|^2 +2(m+1)|\nabla u |^2 \notag\\
 &-4\sum\limits_{i,j=1}^{m+1} D^2 u (\tilde{e_i}, \tilde{e_j} )\cdot \langle \nabla \left(D^2 u (\tilde{e_i},\tilde{e_i})\right), X\rangle +2\langle\nabla \Delta u,\nabla u\rangle \notag\\
 &-2\sum\limits_{i,j=1}^{m+1}\langle \nabla\left( D^2u(\tilde{e_i} ,\tilde{e_j})\right),\tilde{e_i}\rangle \cdot\langle \nabla u,\tilde{e_j} \rangle \notag\\
&-2\sum\limits_{i,j=1}^{m+1} \langle \nabla\left( D^2u(\tilde{e_i} ,\tilde{e_j})\right), \tilde{e_j} \rangle\cdot \langle \nabla u,\tilde{e_i} \rangle \notag\\
 &-4 \Delta u \cdot \langle\nabla u,X \rangle+ 4 \sum_{i,j=1}^{m+1} D^2 u(\tilde{e_i},\tilde{e_j} ) \cdot \langle X,\tilde{e_i} \rangle \langle \nabla u,\tilde{e_j} \rangle \notag\\
&+4\sum_{i,j=1}^{m+1} D^2 u(\tilde{e_i},\tilde{e_j} )\cdot \langle X,\tilde{e_j}\rangle \langle \nabla u,\tilde{e_i}  \rangle-4|\nabla u|^2+2 |\nabla u|^2, \notag\\
 =&|D^3 u|^2 +4 |D^2 u|^2 +(3m+1)|\nabla u|^2 +2\langle \nabla \Delta u,\nabla u \rangle \notag\\
&-4 \sum\limits_{i,j=1}^{m+1}\langle D_{\tilde{e_i}} D_{\tilde{e_j}}\nabla u,\tilde{e_i} \rangle\cdot \langle \nabla u,\tilde{e_j} \rangle \notag
\end{align}
we continue from the previous page:
\begin{align}
=& |D^3 u|^2 +4 |D^2 u|^2 +(3m+1)|\nabla u|^2 +2\langle \nabla \Delta u,\nabla u \rangle \notag\\
&-4 \sum\limits_{i,j=1}^{m+1} \left( \langle D_{\tilde{e_j}} D_{\tilde{e_i}}  \nabla u,\tilde{e_i}\rangle + R( \tilde{e_j},\tilde{e_i},\nabla u,\tilde{e_i} )\right) \cdot \langle\nabla u,\tilde{e_j}  \rangle\notag\\
=&|D^3 u|^2 +4 |D^2 u|^2 +(3m+1)|\nabla u|^2 +2\langle \nabla \Delta u,\nabla u \rangle \notag\\
&-4\sum\limits_{i=1}^{m+1}\langle D_{\nabla u} D_{\tilde{e_i}} \nabla u,\tilde{e_i} \rangle -4(m+1) |\nabla u |^2 +4|\nabla u |^2 \label{ineq'}\\
=& |D^3 u|^2 +4 |D^2 u|^2 -(m-1)|\nabla u|^2 -2\langle \nabla \Delta u,\nabla u  \rangle \notag,
\end{align}
where in \eqref{ineq'}, we use the curvature property of $S^{m+1}$ (see \eqref{curvature}):
$$R( \nabla u,\tilde{e_i},\nabla u,\tilde{e_i} )= |\nabla u|^2-\langle \nabla u,\tilde{e_i}\rangle^2.$$

\end{proof}

We also need the following well-known Reilly's formula:
\begin{prop}[\cite{Reilly1977apllication}] \label{Reilly} 
Under the assumptions of Theorem \ref{computation},
\begin{align}
&-2\int_{\Sigma} \langle \nabla^{\Sigma} \langle \nabla u,\mathbf{n} \rangle ,\nabla^{\Sigma} f \rangle \ d\sigma -\int_{\Sigma} A(\nabla^{\Sigma} f, \nabla^{\Sigma} f)\ d\sigma \notag\\
&=\int_{\Omega_1} |D^2 u|^2 +m\int_{\Omega_1} |\nabla u|^2-\int_{\Omega_1} (\Delta u)^2.\notag
\end{align}
\end{prop}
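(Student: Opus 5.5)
The plan is to integrate the Bochner formula on $\Omega_1$ and then rewrite the resulting boundary terms using the minimality of $\Sigma$. Since $S^{m+1}$ has $\mathrm{Ric}=m\,g$, Bochner's identity reads
$$\tfrac12\Delta|\nabla u|^2=|D^2u|^2+\langle\nabla u,\nabla\Delta u\rangle+m|\nabla u|^2 .$$
We integrate it over $\Omega_1$. Because $\mathbf n$ is the \emph{inward} normal, the divergence theorem gives $\int_{\Omega_1}\Delta\phi=-\int_\Sigma\langle\nabla\phi,\mathbf n\rangle$. Integrating by parts in the middle term gives
$$\int_{\Omega_1}\langle\nabla u,\nabla\Delta u\rangle=-\int_{\Omega_1}(\Delta u)^2-\int_\Sigma \Delta u\,u_n,\qquad u_n:=\langle\nabla u,\mathbf n\rangle .$$
Hence
$$\int_{\Omega_1}|D^2u|^2+m\int_{\Omega_1}|\nabla u|^2-\int_{\Omega_1}(\Delta u)^2=\int_\Sigma\Big(-D^2u(\nabla u,\mathbf n)+\Delta u\,u_n\Big),$$
using $\tfrac12\langle\nabla|\nabla u|^2,\mathbf n\rangle=D^2u(\nabla u,\mathbf n)$.

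Next we evaluate the boundary integrand pointwise on $\Sigma$. Decompose $\nabla u=\nabla^\Sigma f+u_n\mathbf n$; then
$$D^2u(\nabla u,\mathbf n)=D^2u(\nabla^\Sigma f,\mathbf n)+u_n\,D^2u(\mathbf n,\mathbf n).$$
Two standard identities are needed, and both use $B=-D\mathbf n$ together with $H=0$:
\begin{enumerate}
\item The Laplacian splits along $\Sigma$ as $\Delta u=\Delta^\Sigma f+D^2u(\mathbf n,\mathbf n)+mH\,u_n$. The last term vanishes by minimality, so $D^2u(\mathbf n,\mathbf n)=\Delta u-\Delta^\Sigma f$.
\item For $X$ tangent to $\Sigma$, $D^2u(X,\mathbf n)=X(u_n)-\langle\nabla u,D_X\mathbf n\rangle=X(u_n)+A(X,\nabla^\Sigma f)$.
\end{enumerate}
Substituting these, the boundary integrand becomes
$$-\langle\nabla^\Sigma u_n,\nabla^\Sigma f\rangle-A(\nabla^\Sigma f,\nabla^\Sigma f)-u_n(\Delta u-\Delta^\Sigma f)+\Delta u\,u_n .$$
The $\Delta u\,u_n$ terms cancel. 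What is left is $u_n\Delta^\Sigma f$, and on the closed manifold $\Sigma$ we have $\int_\Sigma u_n\Delta^\Sigma f=-\int_\Sigma\langle\nabla^\Sigma u_n,\nabla^\Sigma f\rangle$. This yields exactly the left-hand side of the proposition, namely $-2\int_\Sigma\langle\nabla^\Sigma u_n,\nabla^\Sigma f\rangle-\int_\Sigma A(\nabla^\Sigma f,\nabla^\Sigma f)$.

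The main obstacle is sign bookkeeping, because three conventions interact: the inward normal in the divergence theorem, $A(\eta_1,\eta_2)=\langle D_{\eta_1}\eta_2,\mathbf n\rangle$ versus $B=-D\mathbf n$, and the mean-curvature term in the splitting of $\Delta u$. I would verify the signs in identity (2) via $\langle D_X\mathbf n,Y\rangle=-A(X,Y)$, which follows by differentiating $\langle\mathbf n,Y\rangle=0$. In identity (1) the normal-derivative term carries $H$ and therefore drops out, so that is the only place minimality enters.
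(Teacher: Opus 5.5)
Your proposal is correct and follows the paper's proof essentially step by step. You integrate Bochner's formula $\tfrac12\Delta|\nabla u|^2=|D^2u|^2+m|\nabla u|^2+\langle\nabla\Delta u,\nabla u\rangle$ over $\Omega_1$ with the inward normal, split $D^2u(\nabla u,\mathbf n)$ on $\Sigma$ via $D^2u(\mathbf n,\mathbf n)=\Delta u-\Delta^\Sigma f$ and $D^2u(X,\mathbf n)=X(u_n)+A(X,\nabla^\Sigma f)$, and integrate by parts on $\Sigma$; the only difference is that the paper derives Bochner's formula from its coordinate computations with $\langle\nabla u,\partial/\partial x_\alpha\rangle$ rather than quoting $\mathrm{Ric}=m\,g$. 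One cosmetic slip: with $A(\eta_1,\eta_2)=\langle D_{\eta_1}\eta_2,\mathbf n\rangle$ the splitting is $\Delta u=\Delta^\Sigma f+D^2u(\mathbf n,\mathbf n)-mH\,u_n$ (not $+mH\,u_n$), which is harmless since $H=0$.
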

\begin{proof}
 The equalities  \eqref{laplacian computation}  and \eqref{gradient computation} give 
 \begin{align}
  &\frac{1}{2} \Delta |\nabla u|^2 \notag\\=&\frac{1}{2} \sum\limits_{\alpha=1}^{m+2} \Delta \langle \nabla u,\frac{\partial}{\partial x_{\alpha}} \rangle^2\notag\\
  =& \sum\limits_{\alpha=1}^{m+2}|\nabla \langle \nabla u,\frac{\partial}{\partial x_{\alpha}} \rangle|^2 +\sum\limits_{\alpha=1}^{m+2} \langle \nabla u,\frac{\partial}{\partial x_{\alpha}} \rangle \Delta \langle \nabla u,\frac{\partial}{\partial x_{\alpha}} \rangle \notag\\
  =& |D^2 u|^2 +|\nabla u|^2 +\sum\limits_{\alpha=1}^{m+2} \langle \nabla u, \frac{\partial}{\partial x_{\alpha}} \rangle \cdot\Big(\langle \nabla \Delta u, \frac{\partial}{\partial x_{\alpha}} \rangle-2x_{\alpha} \Delta u +(m-1) \langle \nabla u,\frac{\partial}{\partial x_{\alpha}} \rangle\Big) \notag\\
  =& |D^2 u|^2+m|\nabla u|^2 +\langle \nabla \Delta u , \nabla u\rangle -\Delta u \cdot  \langle \nabla u ,X \rangle \notag\\
  =&|D^2 u|^2+m|\nabla u|^2 +\langle \nabla \Delta u , \nabla u\rangle.  \end{align}
This is  Bochner's  formula.

Applying the divergence theorem yields 
\begin{align}\label{first div}
-\int_{\Sigma} \langle D_{\mathbf{n}} \nabla u ,\nabla u   \rangle=&\int_{\Omega_1} |D^2 u|^2+m \int_{\Omega_1}|\nabla u|^2 + \int_{\Omega_1} \langle \nabla \Delta u , \nabla u\rangle \notag\\
=&  \int_{\Omega_1} |D^2 u|^2+m \int_{\Omega_1}|\nabla u|^2 - \int_{\Sigma}
\langle \nabla u, \mathbf{n} \rangle \Delta u -\int_{\Omega_1} (\Delta u)^2,
\end{align}
where we note that $\mathbf{n}$ is the  inward-pointing  unit normal vector field on $\Sigma$.

For the term 
$\langle D_{\mathbf{n}} \nabla u ,\nabla u   \rangle$, we have
\begin{align}\label{local com}
&\langle D_{\mathbf{n}} \nabla u ,\nabla u   \rangle= \langle D_{\nabla u}  \nabla u,\mathbf{n} \rangle \notag\\
=& \langle \nabla u   , \mathbf{n} \rangle \cdot \langle D_{\mathbf{n}}\nabla u,\mathbf{n} \rangle +\langle D_{\nabla^{\Sigma} f} \nabla u,\mathbf{n}  \rangle \notag\\
=& \langle \nabla u   , \mathbf{n} \rangle \cdot\left(\Delta u-\sum_{i=1}^{m} \langle D_{e_i} \nabla u,e_i\rangle \right) +\langle \nabla^{\Sigma} f, \nabla^{\Sigma} \langle \nabla u,\mathbf{n}\rangle    \rangle+ A(\nabla^{\Sigma} f, \nabla^{\Sigma} f) \notag\\
=&\langle \nabla u   , \mathbf{n} \rangle \left(\Delta u+m H \cdot\langle \nabla u,\mathbf{n} \rangle -\Delta^{\Sigma}  f \right)+ \langle \nabla^{\Sigma} f, \nabla^{\Sigma} \langle \nabla u,\mathbf{n}\rangle    \rangle+ A(\nabla^{\Sigma} f, \nabla^{\Sigma} f) \notag\\
=& \langle \nabla u   , \mathbf{n} \rangle\Delta u-  \langle \nabla u   , \mathbf{n} \rangle \Delta^{\Sigma}  f + \langle \nabla^{\Sigma} f, \nabla^{\Sigma} \langle \nabla u,\mathbf{n}\rangle    \rangle+ A(\nabla^{\Sigma} f, \nabla^{\Sigma} f),
\end{align}
where $\{e_i\}$ is a local orthonormal basis on $\Sigma$, $u|_{\Sigma}=f$ and  the mean curvature $H$ vanishes identically.

Combining \eqref{first div} and \eqref{local com}  and noting that
\begin{align}
 \int_{\Sigma}  \langle \nabla u,\mathbf{n}\rangle \Delta^{\Sigma} f=-\int_{\Sigma} \langle \nabla^{\Sigma} \langle \nabla u,\mathbf{n} \rangle ,\nabla^{\Sigma} f \rangle \notag
 \end{align}
by integration by parts, we  can get the conclusion of this proposition. 
\end{proof}
Since the above Reilly's formula holds for any smooth function, this formula applies to $\langle\nabla u,\frac{\partial}{\partial x_{\alpha}}    \rangle$.
\begin{lem}\label{integral  computations}
Under the assumptions of Theorem \ref{computation},
\begin{align}
&-2\int_{\Sigma} A\left(\nabla^{\Sigma}(\Delta u-2\Delta^{\Sigma} f ),\nabla^{\Sigma} f\right)+2 \int_{\Sigma} \langle \nabla u,\mathbf{n} \rangle \cdot \Delta^{\Sigma} (\Delta u-2\Delta^{\Sigma}f) \notag\\
&-4m\int_{\Sigma} \langle \nabla u,\mathbf{n} \rangle \cdot \Delta^{\Sigma} f+(3m-1)\int_{\Sigma} A(\nabla^{\Sigma} f,\nabla^{\Sigma} f )\notag\\
&-5\int_{\Sigma}A\Big(\big( D_{\mathbf{n}}\nabla u\big)^{\top},\big( D_{\mathbf{n}}\nabla u\big)^{\top}\Big)+2\int_{\Sigma} (\Delta u-\Delta^{\Sigma} f )\cdot \langle A,F^{\star}\big(D^2u \big)\rangle \notag\\
&-\int_{\Sigma} \textup{Trace}_{\Sigma} \Bigg (A\Big(\big( D_{()} \nabla u \big)^{\top} ,\big( D_{()} \nabla u \big)^{\top}\Big)\Bigg)\notag\\
=&\int_{\Omega_1}|D^3 u|^2 -2m(m+1)\int_{\Omega_1}|\nabla u|^2 \notag\\
&-\int_{\Omega_1}|\nabla \Delta u |^2+m\int_{\Omega_1}(\Delta u)^2 -2m\int_{\Omega_1}\langle\nabla \Delta u,\nabla u    \rangle.\notag
\end{align}
\end{lem}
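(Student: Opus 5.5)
The plan is to apply Reilly's formula (Proposition \ref{Reilly}) to each function $u_\alpha:=\langle \nabla u,\frac{\partial}{\partial x_\alpha}\rangle$, $\alpha=1,\dots,m+2$, and sum over $\alpha$. For $u_\alpha$ we write $f_\alpha:=u_\alpha|_{\Sigma}$. The interior side is then
$$\sum_\alpha\Big(\int_{\Omega_1}|D^2u_\alpha|^2+m\int_{\Omega_1}|\nabla u_\alpha|^2-\int_{\Omega_1}(\Delta u_\alpha)^2\Big),$$
and Lemma \ref{some diffential computations} evaluates each of the three sums. Formula \eqref{hession computation} gives $\sum_\alpha|D^2u_\alpha|^2$, and \eqref{gradient computation} gives $\sum_\alpha|\nabla u_\alpha|^2$. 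Squaring \eqref{laplacian computation} and summing over $\alpha$, using $\sum x_\alpha^2=1$ and $\langle X,\nabla u\rangle=\langle X,\nabla\Delta u\rangle=0$, gives
$$\sum_\alpha(\Delta u_\alpha)^2=|\nabla\Delta u|^2+4(\Delta u)^2+(m-1)^2|\nabla u|^2+2(m-1)\langle\nabla\Delta u,\nabla u\rangle.$$
Combining the three, the interior side becomes
$$\int|D^3u|^2+(4+m)\int|D^2u|^2-m(m-1)\int|\nabla u|^2-\int|\nabla\Delta u|^2-4\int(\Delta u)^2-2m\int\langle\nabla\Delta u,\nabla u\rangle.$$
Next I eliminate $\int|D^2u|^2$ using Reilly's formula for $u$ itself together with Bochner's identity integrated (equation \eqref{first div}). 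This trades $(m+4)\int|D^2u|^2$ for $(m+4)\int(\Delta u)^2-m(m+4)\int|\nabla u|^2$ plus boundary terms. Tracking the coefficients reproduces the right-hand side of the lemma: $m\int(\Delta u)^2$ and $-2m(m+1)\int|\nabla u|^2$ (as $-m(m-1)-m(m+4)\ne$, so I will recheck), with any leftover $(\Delta u)^2$ or $|\nabla u|^2$ contributions converted to boundary integrals by integrating $\Delta u\cdot\Delta u$ or $\mathrm{div}(u\nabla u)$-type identities if necessary. The coefficient bookkeeping here must be done carefully.

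The main work is on the boundary side, $\sum_\alpha\big(-2\int_\Sigma\langle\nabla^\Sigma\langle\nabla u_\alpha,\mathbf n\rangle,\nabla^\Sigma f_\alpha\rangle-\int_\Sigma A(\nabla^\Sigma f_\alpha,\nabla^\Sigma f_\alpha)\big)$. Using \eqref{first derivate}, I have $\langle\nabla u_\alpha,\mathbf n\rangle=\langle D_{\mathbf n}\nabla u,\partial_\alpha\rangle-x_\alpha\langle\nabla u,\mathbf n\rangle$ and $\nabla^\Sigma f_\alpha=\sum_i(\langle D_{e_i}\nabla u,\partial_\alpha\rangle-x_\alpha\langle\nabla u,e_i\rangle)e_i$. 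Summing over $\alpha$ turns these into contractions of vectors in $\mathbb R^{m+2}$. This produces the terms $\mathrm{Trace}_\Sigma A((D_{(\,)}\nabla u)^\top,(D_{(\,)}\nabla u)^\top)$, normal components, and $A(\nabla^\Sigma f,\nabla^\Sigma f)$.

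For the gradient-pairing term, I integrate by parts on $\Sigma$ to move derivatives onto $\langle D_{\mathbf n}\nabla u,\partial_\alpha\rangle$. I then decompose $D_{\mathbf n}\nabla u=(D_{\mathbf n}\nabla u)^\top+\langle D_{\mathbf n}\nabla u,\mathbf n\rangle\mathbf n$, using $\langle D_{\mathbf n}\nabla u,\mathbf n\rangle=\Delta u-\Delta^\Sigma f$ (minimality, as in \eqref{local com}) and $(D_{\mathbf n}\nabla u)^\top=\nabla^\Sigma\langle\nabla u,\mathbf n\rangle+B(\nabla^\Sigma f)$. In addition, I express $F^\star(D^2u)=D^{\Sigma 2}f+\langle\nabla u,\mathbf n\rangle A$ via the Gauss formula. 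Codazzi ($\mathrm{div}A=0$ for minimal $\Sigma$ in a space form) lets me integrate $\int\langle A,D^{\Sigma2}\cdot\rangle$ by parts, and the ambient derivative of $\partial_\alpha$ along $\Sigma$ contributes $-x_\alpha F$ terms yielding the $m$-multiples.

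I expect the main obstacle to be this boundary bookkeeping. The difficulty is collecting exactly the coefficients $5$, $2$, $-4m$ and $3m-1$, and recognizing $\Delta^\Sigma(\Delta u-2\Delta^\Sigma f)$. I would organize the work by first computing everything pointwise in an orthonormal frame $\{e_i,\mathbf n\}$ adapted to principal directions, and only then integrating by parts on $\Sigma$ term by term.
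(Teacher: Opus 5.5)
Your overall strategy is the paper's. You apply Proposition~\ref{Reilly} to each $u_\alpha=\langle\nabla u,\frac{\partial}{\partial x_\alpha}\rangle$, sum over $\alpha$, and evaluate the interior with Lemma~\ref{some diffential computations}. You then remove $\int_{\Omega_1}|D^2u|^2$ using Reilly's formula for $u$ itself, and compute the boundary terms pointwise in a principal frame. As written, however, the proposal contains an arithmetic error with an unworkable remedy, and it does not actually carry out the boundary computation.

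\textbf{Interior bookkeeping.} Combine \eqref{hession computation}, $m$ times \eqref{gradient computation}, and the squared Laplacian. The coefficient of $|\nabla u|^2$ is $-(m-1)+m-(m-1)^2=-(m^2-2m)$, not $-m(m-1)$; your own check already flagged this. With the correct value, substituting \eqref{Relly'} gives $-(m^2-2m)-m(m+4)=-2m(m+1)$ and $-4+(m+4)=m$ exactly, so nothing is left over. Your fallback of converting leftover $\int_{\Omega_1}|\nabla u|^2$ or $\int_{\Omega_1}(\Delta u)^2$ into boundary integrals would not work. Here $u$ is not harmonic, so such conversions create terms like $\int_{\Omega_1}u\,\Delta u$ or $\int_\Sigma f\langle\nabla u,\mathbf{n}\rangle$ that are absent from the statement. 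What you must track is the boundary part of that substitution, $(m+4)\big(2\int_\Sigma\langle\nabla u,\mathbf{n}\rangle\Delta^\Sigma f-\int_\Sigma A(\nabla^\Sigma f,\nabla^\Sigma f)\big)$. Moving it to the left turns the boundary-side coefficients $2(4-m)$ and $2m-5$ into the $-4m$ and $3m-1$ of the lemma.

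\textbf{Boundary side.} This is the real content of the lemma, and you only list tools, two of which are off.

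First, the sign is wrong. With $A(\eta_1,\eta_2)=\langle D_{\eta_1}\eta_2,\mathbf{n}\rangle$ one has $F^\star(D^2u)=(D^\Sigma)^2f-\langle\nabla u,\mathbf{n}\rangle A$; compare \eqref{boundary information}.

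Second, the $m$-dependence does not come from the $-x_\alpha F$ pieces. After summing over $\alpha$, those contribute only $-\langle\nabla u,\mathbf{n}\rangle\Delta^\Sigma f+A(\nabla^\Sigma f,\nabla^\Sigma f)+2\langle\nabla^\Sigma f,\nabla^\Sigma\langle\nabla u,\mathbf{n}\rangle\rangle$. They sit next to the main term $\sum_i\langle D_{e_i}D_{\mathbf{n}}\nabla u,D_{e_i}\nabla u\rangle$.

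The missing idea is how to treat that main term. The paper keeps one derivative on each factor rather than integrating by parts onto $\langle D_{\mathbf{n}}\nabla u,\partial_\alpha\rangle$; your route would force you to compute $\Delta^\Sigma(D_{\mathbf{n}}\nabla u)$ and then integrate back. After splitting $D_{\mathbf{n}}\nabla u$ into tangential and normal parts, the paper writes
$\sum_{i,j}e_i\big(D^2u(e_j,\mathbf{n})\big)D^2u(e_i,e_j)$ as $\textup{div}^\Sigma Y-\sum_jD^2u(e_j,\mathbf{n})\sum_ie_i\big(D^2u(e_i,e_j)\big)$, with $Y$ as in \eqref{the definition of Y}. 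The inner sum is computed by commuting ambient covariant derivatives via \eqref{curvature}. One also differentiates $\sum_iD^2u(e_i,e_i)=\Delta^\Sigma f$, which holds on all of $\Sigma$ because $H\equiv0$. At the base point $q$ this gives
\[
\sum_ie_i\big(D^2u(e_i,e_j)\big)=e_j(\Delta^\Sigma f)-\mu_jD^2u(e_j,\mathbf{n})+(m-1)\langle\nabla u,e_j\rangle .
\]
The $(m-1)$ term produces the coefficients $2-m$ and $3-m$ in \eqref{last equ''}. The $-\mu_jD^2u(e_j,\mathbf{n})$ term, together with the $\mu_i$ coming from $D_{e_i}e_j$, gives $2A\big((D_{\mathbf{n}}\nabla u)^\top,(D_{\mathbf{n}}\nabla u)^\top\big)$. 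Adding $\sum_\alpha A(\nabla^\Sigma f_\alpha,\nabla^\Sigma f_\alpha)=A\big((D_{\mathbf{n}}\nabla u)^\top,(D_{\mathbf{n}}\nabla u)^\top\big)+\textup{Trace}_\Sigma(\cdots)+A(\nabla^\Sigma f,\nabla^\Sigma f)$ yields the coefficient $-5$.

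No Codazzi equation is needed. The term $\langle A,F^\star(D^2u)\rangle$ is left as is, since it appears in the statement.
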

\begin{proof}
First, for each $\langle \nabla u, \frac{\partial}{\partial x_{\alpha}} \rangle$, applying Proposition \ref{Reilly} gives 
\begin{align}
 & -2 \int_{\Sigma} \left\langle \nabla^{\Sigma} \Big\langle \nabla\langle \nabla u,\frac{\partial}{\partial x_{\alpha}} \rangle, \mathbf{n}  \Big \rangle , \nabla^{\Sigma} \langle \nabla u,\frac{\partial}{\partial x_{\alpha}}  \rangle   \right\rangle -\int_{\Sigma} A\left(\nabla^{\Sigma}\langle \nabla u,\frac{\partial}{\partial x_{\alpha}}  \rangle,\nabla^{\Sigma}\langle \nabla u,\frac{\partial}{\partial x_{\alpha}}  \rangle\right) \notag\\
&= \int_{\Omega_1} |D^2 \langle \nabla u,\frac{\partial}{\partial x_{\alpha}} \rangle|^2 +m \int_{\Omega_1} |\nabla \langle \nabla u,\frac{\partial}{\partial x_{\alpha}} \rangle|^2-\int_{\Omega_1} \left(\Delta\langle \nabla u,\frac{\partial}{\partial x_{\alpha}} \rangle\right)^2. \label{Reilly application}
\end{align}
Using \eqref{laplacian computation}, we have
\begin{align}
\sum_{\alpha=1} ^{m+2}\left(\Delta\langle \nabla u,\frac{\partial}{\partial x_{\alpha}} \rangle\right)^2 =& \sum_{\alpha=1}^{m+2}\left(\langle \nabla \Delta u, \frac{\partial}{\partial x_{\alpha}} \rangle-2x_{\alpha} \Delta u +(m-1) \langle \nabla u,\frac{\partial}{\partial x_{\alpha}} \rangle\right)^2\notag\\
=& |\nabla \Delta u|^2 +4 (\Delta u )^2 +(m-1)^2 | \nabla u|^2-4 \Delta u \cdot \langle  \nabla \Delta u ,X \rangle \notag\\
&+2 (m-1) \langle \nabla \Delta u, \nabla u \rangle-4(m-1) \Delta u \cdot \langle \nabla u,X\rangle\notag\\
=&|\nabla \Delta u|^2 +4 (\Delta u )^2 +(m-1)^2 | \nabla u|^2+ 2 (m-1) \langle \nabla \Delta u, \nabla u \rangle .\label{squared laplace}
\end{align}
 Fix $q\in \Sigma$ and choose a local  orthonormal frame $\{e_i\}_{i=1}^{m}$ near $q$ on $\Sigma$ such that 
$$ D^{\Sigma}_{e_i} e_j (q)=0, \ [e_i,e_j](q)=0,\ A(e_i,e_j)(q) =\mu_i \delta_{ij}, \  1\leq i,j \leq m.$$
At $q$, we compute
 \begin{align}
&\sum_{\alpha=1}^{m+2}\left\langle \nabla^{\Sigma} \Big\langle \nabla\langle \nabla u,\frac{\partial}{\partial x_{\alpha}} \rangle, \mathbf{n}  \Big \rangle , \nabla^{\Sigma} \langle \nabla u,\frac{\partial}{\partial x_{\alpha}}  \rangle   \right\rangle \notag\\
=& \sum_{\alpha=1}^{m+2} \sum_{i=1}^{m} e_i \Big\langle \nabla\langle \nabla u,\frac{\partial}{\partial x_{\alpha}} \rangle, \mathbf{n}  \Big \rangle \cdot e_i \langle \nabla u,\frac{\partial}{\partial x_{\alpha}}  \rangle\notag\\
=&\sum_{\alpha=1}^{m+2} \sum_{i=1}^{m} e_i \left( \langle D^{\mathbb{R}^{m+2}}_{\mathbf{n}} \nabla u ,  \frac{\partial}{\partial x_{\alpha}}\rangle    \right) \cdot \langle D^{\mathbb{R}^{m+2}}_{e_i} \nabla u , \frac{\partial}{\partial x_{\alpha}}\rangle \notag\\
 =& \sum_{\alpha=1}^{m+2} \sum_{i=1}^{m} e_i\left( 
\langle D_{\mathbf{n}} \nabla u ,\frac{\partial}{\partial x_{\alpha}} \rangle -x_{\alpha}  \langle \nabla u,\mathbf{n}\rangle  \right) \cdot \left( \langle  D_{e_i}\nabla u,\frac{\partial}{\partial x_{\alpha}}  \rangle -x_{\alpha} \langle \nabla u,e_i \rangle  \right) \label{equ'}\\
=& \sum_{\alpha=1}^{m+2} \sum_{i=1}^{m} \left(\langle D^{\mathbb{R}^{m+2}}_{e_i} D_{\mathbf{n}}\nabla u ,  \frac{\partial}{\partial x_{\alpha}}\rangle -\langle  e_i, \frac{\partial}{\partial x_{\alpha}} \rangle \langle \nabla u, \mathbf{n}\rangle- x_{\alpha}\langle D_{e_i} \nabla u, \mathbf{n}\rangle -x_{\alpha} \langle \nabla u,D_{e_i} \mathbf{n}\rangle\right)\notag\\
&\qquad \quad\times \left( \langle  D_{e_i}\nabla u,\frac{\partial}{\partial x_{\alpha}}  \rangle -x_{\alpha} \langle \nabla u,e_i \rangle  \right)\notag\\
=&\sum_{\alpha=1}^{m+2} \sum_{i=1}^{m}\left(\langle D_{e_i} D_{\mathbf{n}} \nabla u, \frac{\partial}{\partial x_{\alpha}} \rangle-\langle  e_i, \frac{\partial}{\partial x_{\alpha}} \rangle \langle \nabla u, \mathbf{n}\rangle- 2x_{\alpha}\langle D_{e_i} \nabla u, \mathbf{n}\rangle +\mu_i x_{\alpha} \langle \nabla u,e_i\rangle\right)\notag\\
&\quad\qquad\times \left( \langle  D_{e_i}\nabla u,\frac{\partial}{\partial x_{\alpha}}  \rangle -x_{\alpha} \langle \nabla u,e_i \rangle  \right) ,\label{equ''}
\end{align}
where in \eqref{equ'}, we use 
\begin{align}
\langle D^{\mathbb{R}^{m+2}}_{\mathbf{n}} \nabla u ,  \frac{\partial}{\partial x_{\alpha}}\rangle=& \langle D_{\mathbf{n}} \nabla u ,\frac{\partial}{\partial x_{\alpha}} \rangle +x_{\alpha} \langle D_{\mathbf{n}} \nabla u ,X \rangle \notag\\
=&\langle D_{\mathbf{n}} \nabla u ,\frac{\partial}{\partial x_{\alpha}} \rangle-x_{\alpha}\langle D_{\mathbf{n}} X,\nabla u  \rangle \notag\\
=&\langle D_{\mathbf{n}} \nabla u ,\frac{\partial}{\partial x_{\alpha}} \rangle -x_{\alpha}  \langle \nabla u,\mathbf{n}\rangle \notag;
\end{align}
in \eqref{equ''}, using an analogous
argument to the above  yields
\begin{align}
\langle D^{\mathbb{R}^{m+2}}_{e_i} D_{\mathbf{n}}\nabla u ,  \frac{\partial}{\partial x_{\alpha}}\rangle=\langle D_{e_i} D_{\mathbf{n}} \nabla u, \frac{\partial}{\partial x_{\alpha}} \rangle-x_{\alpha}\langle D_{e_i} \nabla u, \mathbf{n}\rangle
\end{align}
and use $ D_{e_i} \mathbf{n}=-\mu_i e_i$.

We will continue the computation from the previous page:
\begin{align}
&\sum_{\alpha=1}^{m+2}\left\langle \nabla^{\Sigma} \Big\langle \nabla\langle \nabla u,\frac{\partial}{\partial x_{\alpha}} \rangle, \mathbf{n}  \Big \rangle , \nabla^{\Sigma} \langle \nabla u,\frac{\partial}{\partial x_{\alpha}}  \rangle   \right\rangle \notag\\
=& \sum_{i=1}^{m} \Big(\langle D_{e_i} D_{\mathbf{n}} \nabla u,D_{e_i } \nabla u \rangle -\langle D_{e_i}  \nabla u,e_i\rangle \langle \nabla u, \mathbf{n}\rangle \notag\\
&\qquad-2 \langle D_{e_i} \nabla u ,X \rangle \langle D_{e_i} \nabla u,\mathbf{n} \rangle +\mu_i \langle D_{e_i} \nabla u,X \rangle  \langle\nabla u,e_i \rangle \notag\\
&\qquad - \langle D_{e_i} D_{\mathbf{n}} \nabla u, X\rangle \langle \nabla u,e_i\rangle + \langle e_i, X\rangle \langle \nabla u,e_i\rangle \langle \nabla u,\mathbf{n} \rangle \notag\\
&\qquad+2\langle\nabla u,e_i \rangle\langle D_{e_i} \nabla u,\mathbf{n} \rangle  -\mu_i \langle \nabla u,e_i \rangle^2\Big)\notag\\
=& \sum_{i=1}^{m} \Big(\langle D_{e_i} D_{\mathbf{n}} \nabla u,D_{e_i } \nabla u \rangle -\mu_i \langle\nabla^{\Sigma}  f,e_i \rangle ^2  \Big) -
\langle\nabla u,\mathbf{n} \rangle \cdot \Delta^{\Sigma} f  \notag\\
&+mH \cdot \langle \nabla u,\mathbf{n}\rangle^2+2 \langle D_{\nabla^{\Sigma} f} \nabla u,\mathbf{n} \rangle \label{equ'''}\\
=& \sum_{i,j=1}^{m} \langle D_{e_i} \big(\langle D_{\mathbf{n}} \nabla u, e_j\rangle e_j\big), D_{e_i} \nabla u\rangle + \sum_{i=1}^{m} \langle D_{e_i}\big(\langle D_{\mathbf{n}}\nabla u,\mathbf{n} \rangle \mathbf{n}\big),D_{e_i} \nabla u \rangle \notag\\
& + A(\nabla^{\Sigma} f,\nabla^{\Sigma} f) +2\langle \nabla^{\Sigma} f, \nabla^{\Sigma}\langle\nabla u,\mathbf{n} \rangle \rangle-\langle \nabla u,\mathbf{n} \rangle\cdot \Delta^{\Sigma} f \label{equ''''}\\
=&\sum_{i,j=1}^{m} \Big(e_i \langle D_{e_j} \nabla u,\mathbf{n} \rangle \cdot \langle D_{e_j} \nabla u,e_i \rangle+ \langle D_{e_j} \nabla u,\mathbf{n}\rangle \langle D_{e_i} e_j, D_{e_i} \nabla u \rangle \Big)\notag\\
&+ \sum_{i=1}^{m} \Big(e_i(\Delta u-\sum_{j=1}^{m}\langle D_{e_j}\nabla u,e_j\rangle) \cdot \langle D_{e_i}\nabla u,\mathbf{n} \rangle \notag\\  
&+ (\Delta u-\sum_{j=1}^{m}\langle D_{e_j}\nabla u,e_j\rangle)\cdot\langle D_{e_i} \mathbf{n},D_{e_i} \nabla u\rangle   \Big) \notag\\
&+ A(\nabla^{\Sigma} f,\nabla^{\Sigma} f)+ 2\langle \nabla^{\Sigma} f, \nabla^{\Sigma}\langle\nabla u,\mathbf{n} \rangle \rangle-\langle \nabla u,\mathbf{n} \rangle\cdot \Delta^{\Sigma} f\notag
\end{align}
where in \eqref{equ'''}, we use $u|_{\Sigma}=f$ and 
\begin{align}
 \sum_{i=1}^{m} \langle D_{e_i} \nabla u, e_i\rangle=\Delta^{\Sigma} f-mH\cdot \langle \nabla u,\mathbf{n} \rangle \label{basic equ};
\end{align}
in \eqref{equ''''}, we use $H=0$ and 
\begin{align}
 \langle D_{\nabla^{\Sigma} f}\nabla u ,\mathbf{n}\rangle =   A(\nabla^{\Sigma} f,\nabla^{\Sigma} f)+ \langle \nabla^{\Sigma} f,\nabla^{\Sigma}\langle\nabla u,\mathbf{n} \rangle \rangle.\label{basic equ'}
 \end{align}
\begin{align}
  \sum_{i=1}^{m} \mu_i \langle\nabla^{\Sigma} f,e_i\rangle^2= A(\nabla^{\Sigma}f, \nabla^{\Sigma}f).\notag
  \end{align}
We  continue the computation from the previous page:
\begin{align}
&\sum_{\alpha=1}^{m+2}\left\langle \nabla^{\Sigma} \Big\langle \nabla\langle \nabla u,\frac{\partial}{\partial x_{\alpha}} \rangle, \mathbf{n}  \Big \rangle , \nabla^{\Sigma} \langle \nabla u,\frac{\partial}{\partial x_{\alpha}}  \rangle   \right\rangle \notag\\
=& \textup{div}^{\Sigma}(Y)-\sum_{i,j=1}^{m}\langle D_{e_j} \nabla u,\mathbf{n}\rangle\langle D_{e_i}D_{e_j} \nabla u,e_i \rangle\notag\\
&+\sum_{i,j=1}^{m}\Big( -\langle D_{e_j}\nabla u,\mathbf{n} \rangle \langle D_{e_j}\nabla u,D_{e_i}e_i \rangle +\langle D_{e_j} \nabla u,\mathbf{n}\rangle \langle D_{e_i} e_j, D_{e_i} \nabla u \rangle \Big) \notag\\
&+\sum_{i=1}^{m} \Big(e_i(\Delta u-\Delta^{\Sigma} f+ mH\langle\nabla u,\mathbf{n} \rangle    ) \cdot \langle D_{e_i}\nabla u,\mathbf{n} \rangle\notag\\
&\qquad \quad + (\Delta u-\Delta^{\Sigma} f+ mH\langle\nabla u,\mathbf{n} \rangle )\cdot\langle D_{e_i} \mathbf{n},D_{e_i} \nabla u\rangle   \Big)\label{equ'''''}\\
&+ A(\nabla^{\Sigma} f,\nabla^{\Sigma} f)+ 2\langle \nabla^{\Sigma} f, \nabla^{\Sigma}\langle\nabla u,\mathbf{n} \rangle \rangle-\langle \nabla u,\mathbf{n} \rangle\cdot \Delta^{\Sigma} f\notag\\
=& \textup{div}^{\Sigma}(Y)-\sum_{i,j=1}^{m}\langle D_{e_j} \nabla u,\mathbf{n}\rangle\langle D_{e_i}D_{e_j} \nabla u,e_i \rangle \notag  \\
& +\sum_{i=1}^{m} (\mu_i-mH)  \langle D_{e_i} \nabla u,\mathbf{n} \rangle^2
+ A(\nabla^{\Sigma} f,\nabla^{\Sigma} f)+ 2\langle \nabla^{\Sigma} f, \nabla^{\Sigma}\langle\nabla u,\mathbf{n} \rangle \rangle \notag\\
&-\langle \nabla u,\mathbf{n} \rangle\cdot \Delta^{\Sigma} f+\sum_{i=1}^{m} \Big(e_i(\Delta u-\Delta^{\Sigma} f+ mH\langle\nabla u,\mathbf{n} \rangle    ) \cdot \langle D_{e_i}\nabla u,\mathbf{n} \rangle\notag\\
&\qquad \qquad \qquad \qquad \qquad \  -\mu_i (\Delta u-\Delta^{\Sigma} f+ m H\langle\nabla u,\mathbf{n} \rangle )\cdot\langle D_{e_i} \nabla u,e_i \rangle \Big), \notag
\end{align}
where in \eqref{equ'''''}, $Y$  is a global smooth vector field on $\Sigma$, defined  by
\begin{align}\label{the definition of Y}
    Y=\sum_{i=1}^{m} \Big(\sum_{j=1}^{m} \langle D_{\overline{e_j}}\nabla u,\mathbf{n} \rangle \cdot \langle D_{\overline{e_j}} \nabla u,\overline{e_i}\rangle  \Big) \overline{e_i}
    \end{align}
 in any local orthonormal frame $\{\overline{e_l}\}_{l=1}^{m}$ on $\Sigma$, and we also use \eqref{basic equ}. 
 
We  continue the computation from the previous page:
\begin{align}
&\sum_{\alpha=1}^{m+2}\left\langle \nabla^{\Sigma} \Big\langle \nabla\langle \nabla u,\frac{\partial}{\partial x_{\alpha}} \rangle, \mathbf{n}  \Big \rangle , \nabla^{\Sigma} \langle \nabla u,\frac{\partial}{\partial x_{\alpha}}  \rangle   \right\rangle \notag\\
=& \textup{div}^{\Sigma}(Y)-\sum_{i,j=1}^{m}\langle D_{e_j} \nabla u,\mathbf{n}  \rangle \cdot \Big(\langle D_{e_j} D_{e_i}\nabla u,e_i\rangle +R(e_j,e_i,\nabla u,e_i)\Big)\notag\\
&+ \sum_{i=1}^{m}\mu_i \left (D^2 u(e_i,\mathbf{n})\right)^2+ A(\nabla^{\Sigma} f,\nabla^{\Sigma} f)+ 2\langle \nabla^{\Sigma} f, \nabla^{\Sigma}\langle\nabla u,\mathbf{n} \rangle \rangle\notag\\
&-\langle \nabla u,\mathbf{n} \rangle\cdot \Delta^{\Sigma} f+A\left(\nabla^{\Sigma}(\Delta u-\Delta^{\Sigma} f ),\nabla^{\Sigma} f\right)\notag\\
&+\langle\nabla^{\Sigma}(\Delta u-\Delta^{\Sigma} f),\nabla^{\Sigma} \langle \nabla u,\mathbf{n} \rangle\rangle    -\sum_{i=1}^{m} \mu_i(\Delta u-\Delta^{\Sigma} f )\cdot D^2 u(e_i,e_i)\label{last equ}\\
=& \textup{div}^{\Sigma}(Y)- \sum_{j=1}^{m}\langle D_{e_j} \nabla u,\mathbf{n}  \rangle \cdot e_j ( \Delta^{\Sigma} f-m H\langle\nabla u,\mathbf{n} \rangle)\notag\\
&+\sum_{i,j=1}^{m}\langle D_{e_j} \nabla u,\mathbf{n}  \rangle \cdot \langle D_{e_i} \nabla u, D_{e_j}e_i \rangle -\sum_{i,j=1}^{m}\langle D_{e_j} \nabla u,\mathbf{n}  \rangle \cdot \big(\langle\nabla u,e_j \rangle  -\delta_{ij}\langle \nabla u,e_i  \rangle  \big)\label{last equ'}\\
&+\sum_{i=1}^{m}\mu_i \left (D^2 u(e_i,\mathbf{n})\right)^2 +  A(\nabla^{\Sigma} f,\nabla^{\Sigma} f) + 2\langle \nabla^{\Sigma} f, \nabla^{\Sigma}\langle\nabla u,\mathbf{n} \rangle \rangle\notag\\
&-\langle \nabla u,\mathbf{n} \rangle\cdot \Delta^{\Sigma} f
+A\left(\nabla^{\Sigma}(\Delta u-\Delta^{\Sigma} f ),\nabla^{\Sigma} f\right)+\langle\nabla^{\Sigma}(\Delta u-\Delta^{\Sigma} f),\nabla^{\Sigma} \langle \nabla u,\mathbf{n} \rangle\rangle \notag\\
&-\sum_{i=1}^{m} \mu_i(\Delta u-\Delta^{\Sigma} f )\cdot D^2 u(e_i,e_i)\notag\\
=& \textup{div}^{\Sigma}(Y)+A\left(\nabla^{\Sigma}(\Delta u-2\Delta^{\Sigma} f ),\nabla^{\Sigma} f\right) +\langle\nabla^{\Sigma}(\Delta u-2\Delta^{\Sigma} f),\nabla^{\Sigma} \langle \nabla u,\mathbf{n} \rangle\rangle\notag\\
&+ (2-m) A(\nabla^{\Sigma} f,\nabla^{\Sigma} f)+ (3-m)\langle \nabla^{\Sigma} f, \nabla^{\Sigma}\langle\nabla u,\mathbf{n} \rangle \rangle-\langle \nabla u,\mathbf{n} \rangle\cdot \Delta^{\Sigma} f\label{last equ''}\\
&+2 \sum_{i=1}^{m}\mu_i \left (D^2 u(e_i,\mathbf{n})\right)^2- \sum_{i=1}^{m} \mu_i(\Delta u-\Delta^{\Sigma} f )\cdot D^2 u(e_i,e_i) \notag\\
=& \textup{div}^{\Sigma}(Y)+A\left(\nabla^{\Sigma}(\Delta u-2\Delta^{\Sigma} f ),\nabla^{\Sigma} f\right) +\langle\nabla^{\Sigma}(\Delta u-2\Delta^{\Sigma} f),\nabla^{\Sigma} \langle \nabla u,\mathbf{n} \rangle\rangle\notag\\
&+ (2-m) A(\nabla^{\Sigma} f,\nabla^{\Sigma} f)+ (3-m)\langle \nabla^{\Sigma} f, \nabla^{\Sigma}\langle\nabla u,\mathbf{n} \rangle \rangle-\langle \nabla u,\mathbf{n} \rangle\cdot \Delta^{\Sigma} f 
\notag\\
&+2 A\Big(\big( D_{\mathbf{n}}\nabla u\big)^{\top},\big(D_{\mathbf{n}}\nabla u\big)^{\top}\Big)-(\Delta u-\Delta^{\Sigma} f )\cdot                    \langle A,F^{\star}\big(D^2u \big)\rangle,\notag
\end{align}
where in \eqref{last equ}, we use $H=0$ and \eqref{basic equ'} ( substitute $f$ with  $ \Delta u-\Delta^{\Sigma} f$ ); in \eqref{last equ'}, we use \eqref{basic equ} and the curvature of $S^{m+1}$(see \eqref{curvature}):
$$R(e_j,e_i,\nabla u,e_i) =\langle \nabla u,e_j\rangle -\delta_{ij}\langle \nabla u,e_i\rangle;$$ 
in \eqref{last equ''}, one term  directly  uses  \eqref{basic equ'} and  the other use this but with $f$ replaced  by $ -\Delta^{\Sigma} f$;
in the last step, $\top$ denotes the projection onto the tangent bundle of $\Sigma$ and $F^{\star}$ denotes the pull-back of tensors corresponding to the map: $F:\Sigma \rightarrow S^{m+1} $.

For the term $A\left(\nabla^{\Sigma}\langle \nabla u,\frac{\partial}{\partial x_{\alpha}}  \rangle,\nabla^{\Sigma}\langle \nabla u,\frac{\partial}{\partial x_{\alpha}}  \rangle\right)$,
at $q$, we  have
\begin{align}
& \sum_{\alpha=1}^{m+2}A\left(\nabla^{\Sigma}\langle \nabla u,\frac{\partial}{\partial x_{\alpha}}  \rangle,\nabla^{\Sigma}\langle \nabla u,\frac{\partial}{\partial x_{\alpha}}  \rangle\right) \notag\\
=&\sum_{\alpha=1}^{m+2}\sum_{i=1}^{m} \mu_i \left(e_i \langle \nabla u,\frac{\partial}{\partial x_{\alpha}} \rangle\right)^2\notag\\
=& \sum_{\alpha=1}^{m+2}\sum_{i=1}^{m} \mu_i \left(\langle D^{\mathbb{R}^{m+2}}_{e_i}\nabla u, \frac{\partial}{\partial x_{\alpha}}\rangle \right)^2\notag\\
=&\sum_{\alpha=1}^{m+2}\sum_{i=1}^{m} \mu_i \left(\langle D_{e_i}\nabla u, \frac{\partial}{\partial x_{\alpha}}\rangle -x_{\alpha}\langle \nabla u,e_i \rangle \right)^2 \label{first derivate'}\\
=&\sum_{i=1}^{m}\mu_i \Big(  |D_{e_i} \nabla u|^2+ \langle \nabla^{\Sigma} f,e_i \rangle^2 \Big) \notag\\
=&A\Big(\big( D_{\mathbf{n}}\nabla u\big)^{\top},\big( D_{\mathbf{n}}\nabla u\big)^{\top}\Big)+\text{Trace}_{\Sigma} \Bigg (A\Big(\big( D_{()} \nabla u \big)^{\top} ,\big( D_{()} \nabla u \big)^{\top}\Big)\Bigg) \notag\\
&+A(\nabla^{\Sigma} f,\nabla^{\Sigma} f ).\notag
\end{align}
where in \eqref{first derivate'}, we use \eqref{first derivate};
in the last step,
$$\text{Trace}_{\Sigma} \Bigg (A\Big(\big( D_{()} \nabla u \big)^{\top} ,\big( D_{()} \nabla u \big)^{\top}\Big)\Bigg)=\sum_{i=1}^{m}
A\Big(\big( D_{\overline{e_i}} \nabla u \big)^{\top} ,\big( D_{\overline{e_i}} \nabla u \big)^{\top}\Big)$$ in any local orthonormal frame $\{\overline{e_l} \}_{l=1}^{m}$ on $\Sigma$.

Note the above two computations hold for any point on  $\Sigma$ since $q$ is arbitrary and all terms in the final result of the  computations are globally smooth. Hence, we can integrate them over $\Sigma$. Applying Stokes theorem  gives
 $$\int_{\Sigma} \textup{div}^{\Sigma} (Y)=0,$$
$$\int_{\Sigma} \langle\nabla^{\Sigma} f,\nabla^{\Sigma}\langle \nabla u,\mathbf{n} \rangle \rangle=-\int_{\Sigma} \langle \nabla u,\mathbf{n} \rangle \Delta^{\Sigma} f,$$
$$\int_{\Sigma} \langle\nabla^{\Sigma}(\Delta u-2\Delta^{\Sigma} f),\nabla^{\Sigma}\langle \nabla u,\mathbf{n} \rangle \rangle=-\int_{\Sigma} \langle \nabla u,\mathbf{n} \rangle \cdot \Delta^{\Sigma} (\Delta u-2\Delta^{\Sigma}f),$$
and (by combining like terms  )
\begin{align}
&-2\sum\limits_{\alpha=1}^{m+2} \int_{\Sigma} \left\langle \nabla^{\Sigma} \Big\langle \nabla\langle \nabla u,\frac{\partial}{\partial x_{\alpha}} \rangle, \mathbf{n}  \Big \rangle , \nabla^{\Sigma} \langle \nabla u,\frac{\partial}{\partial x_{\alpha}}  \rangle   \right\rangle \notag\\
&-\sum\limits_{\alpha=1}^{m+2}\int_{\Sigma} A\left(\nabla^{\Sigma}\langle \nabla u,\frac{\partial}{\partial x_{\alpha}}  \rangle,\nabla^{\Sigma}\langle \nabla u,\frac{\partial}{\partial x_{\alpha}}  \rangle\right) \notag
\end{align}
we continue from the previous page:
\begin{align}
=&-2\int_{\Sigma} A\left(\nabla^{\Sigma}(\Delta u-2\Delta^{\Sigma} f ),\nabla^{\Sigma} f\right)+2 \int_{\Sigma} \langle \nabla u,\mathbf{n} \rangle \cdot \Delta^{\Sigma} (\Delta u-2\Delta^{\Sigma}f) \notag\\
&+2(4-m)\int_{\Sigma} \langle \nabla u,\mathbf{n} \rangle \cdot \Delta^{\Sigma} f+(2m-5)\int_{\Sigma} A(\nabla^{\Sigma} f,\nabla^{\Sigma} f )\notag\\
&-5\int_{\Sigma}A\Big(\big( D_{\mathbf{n}}\nabla u\big)^{\top},\big( D_{\mathbf{n}}\nabla u\big)^{\top}\Big)+2\int_{\Sigma} (\Delta u-\Delta^{\Sigma} f )\cdot \langle A,F^{\star}\big(D^2u \big)\rangle \notag\\
&-\int_{\Sigma} \text{Trace}_{\Sigma} \Bigg (A\Big(\big( D_{()} \nabla u \big)^{\top} ,\big( D_{()} \nabla u \big)^{\top}\Big)\Bigg).\label{final result 1}
\end{align}
Summing \eqref{Reilly application} over $\alpha$ from 1 to $m+2$ and then substituting  \eqref{final result 1}, \eqref{hession computation},\eqref{gradient computation}, \eqref{squared laplace} into this expression   give
\begin{align}
&-2\int_{\Sigma} A\left(\nabla^{\Sigma}(\Delta u-2\Delta^{\Sigma} f ),\nabla^{\Sigma} f\right)+2 \int_{\Sigma} \langle \nabla u,\mathbf{n} \rangle \cdot \Delta^{\Sigma} (\Delta u-2\Delta^{\Sigma}f) \notag\\
&+2(4-m)\int_{\Sigma} \langle \nabla u,\mathbf{n} \rangle \cdot \Delta^{\Sigma} f+(2m-5)\int_{\Sigma} A(\nabla^{\Sigma} f,\nabla^{\Sigma} f )\notag\\
&-5\int_{\Sigma}A\Big(\big( D_{\mathbf{n}}\nabla u\big)^{\top},\big( D_{\mathbf{n}}\nabla u\big)^{\top}\Big)+2\int_{\Sigma} (\Delta u-\Delta^{\Sigma} f )\cdot \langle A,F^{\star}\big(D^2u \big)\rangle \notag\\
&-\int_{\Sigma} \text{Trace}_{\Sigma} \Bigg (A\Big(\big( D_{()} \nabla u \big)^{\top} ,\big( D_{()} \nabla u \big)^{\top}\Big)\Bigg)\notag\\
=&\int_{\Omega_1}|D^3 u|^2+(m+4)\int_{\Omega_1}|D^2 u|^2 -(m^2-2m)\int_{\Omega_1}|\nabla u|^2\notag\\
&-\int_{\Omega_1}|\nabla \Delta u |^2-4\int_{\Omega_1}(\Delta u)^2 -2m\int_{\Omega_1}\langle\nabla \Delta u,\nabla u    \rangle .\label{final result 2}
\end{align}
Note by Proposition \ref{Reilly}, 
\begin{align}
    \int_{\Omega_1}|D^2 u|^2=& -2\int_{\Sigma} \langle \nabla^{\Sigma} \langle \nabla u,\mathbf{n} \rangle ,\nabla^{\Sigma} f \rangle  -\int_{\Sigma} A(\nabla^{\Sigma} f, \nabla^{\Sigma} f)\notag\\
    &-m\int_{\Omega_1}|\nabla u|^2 +\int_{\Omega_1}(\Delta u)^2\notag\\
    =& 2 \int_{\Sigma} \langle \nabla u,\mathbf{n} \rangle \cdot \Delta^{\Sigma}f-  \int_{\Sigma} A(\nabla^{\Sigma} f, \nabla^{\Sigma} f)\notag\\
    &-m\int_{\Omega_1}|\nabla u|^2 +\int_{\Omega_1}(\Delta u)^2.\notag\\    \label{Relly'}
    \end{align}
 The lemma follows by substituting \eqref{Relly'} into \eqref{final result 2} and simplifying it.
\end{proof}
The results of Lemma \ref{some diffential computations}, Proposition \ref{Reilly} and Lemma \ref{integral  computations}  also apply to $v$. Now, we prove Theorem \ref{computation}.
\begin{proof}[Proof of Theorem \ref{computation}]
Since $\mathbf{n}$ is  the inward-pointing unit normal vector field with respect to $\Omega_1$ and $A$ is  the corresponding  second fundamental form, $-\mathbf{n}$ is the inward-pointing unit normal vector field with respect to $\Omega_2$ and $-A$ is  the corresponding  second fundamental form. Then, applying  Lemma \ref{integral  computations} to $v$ and noting that $v|_{\Sigma}=f$, we obtain
\begin{align}
&2\int_{\Sigma} A\left(\nabla^{\Sigma}(\Delta v-2\Delta^{\Sigma} f ),\nabla^{\Sigma} f\right)-2 \int_{\Sigma} \langle \nabla v,\mathbf{n} \rangle \cdot \Delta^{\Sigma} (\Delta v-2\Delta^{\Sigma}f) \notag\\
&+4m\int_{\Sigma} \langle \nabla v,\mathbf{n} \rangle \cdot \Delta^{\Sigma} f-(3m-1)\int_{\Sigma} A(\nabla^{\Sigma} f,\nabla^{\Sigma} f )\notag\\
&+5\int_{\Sigma}A\Big(\big( D_{\mathbf{n}}\nabla v\big)^{\top},\big( D_{\mathbf{n}}\nabla v\big)^{\top}\Big)-2\int_{\Sigma} (\Delta v-\Delta^{\Sigma} f )\cdot \langle A, F^{\star}\big(D^2v \big)\rangle \notag\\
&+\int_{\Sigma} \text{Trace}_{\Sigma} \Bigg (A\Big(\big( D_{()} \nabla v \big)^{\top} ,\big( D_{()} \nabla v \big)^{\top}\Big)\Bigg)\notag\\
=&\int_{\Omega_2}|D^3 v|^2 -2m(m+1)\int_{\Omega_2}|\nabla v|^2 \notag\\
&-\int_{\Omega_2}|\nabla \Delta v |^2+m\int_{\Omega_2}(\Delta v)^2 -2m\int_{\Omega_2}\langle\nabla \Delta v,\nabla v    \rangle .\label{final result 4}
\end{align}
By adding the result of Lemma \ref{integral  computations} and the inequality \eqref{final result 4}, the theorem follows.
\end{proof}

\section{Proof of Theorem \ref{main}}\label{main proof}
So far, we are now ready for the main theorem. The proof will follow from  Proposition \ref{function in tubular} and Theorem \ref{computation}. In this section,  we continue to  use the notations of Theorem \ref{computation} : $\mathbf{n}$ is the inward-pointing unit normal vector field with respect to $\Omega_1$, $A$ is the corresponding second fundamental form, and the case of $\Omega_2$ differs from that $\Omega_1$ only by a sign. Furthermore, we only need to prove the case that $|A|^2_{\max} >m$ because if $|A|^2_{\max} \leq m$, then $\Sigma$ is a totally geodesic sphere or a Clifford torus by the works of \cite{Simons1968minimal} and by either \cite{CherndoCarmominimal} or \cite{Lawson1969rigidity}, in which $\lambda_1=m$. In the following proof, we always assume that $|A|^2_{\max} >m$.

First, let's review  Choi and Wang's work \cite{chw1983first}.
\begin{thm}[\text{\cite{chw1983first}}]\label{Choi-Wang's work}
    Under the assumptions of Theorem \ref{main}, $$\lambda_1 \geq \frac{m}{2}.$$
\end{thm}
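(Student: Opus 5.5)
We will reproduce Choi and Wang's argument in the form that fits the present setting, namely via Reilly's formula (Proposition \ref{Reilly}) applied to harmonic extensions on both sides of $\Sigma$. Let $f$ be a first eigenfunction, $\Delta^{\Sigma} f=-\lambda_1 f$, and let $u$ and $v$ be the harmonic extensions of $f$ to $\Omega_1$ and $\Omega_2$. These are smooth up to the boundary by elliptic regularity, since $\Sigma$ is smooth.

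Since $\Delta u=0$, Proposition \ref{Reilly} gives
\begin{align}
-2\int_{\Sigma}\langle \nabla^{\Sigma}\langle\nabla u,\mathbf{n}\rangle,\nabla^{\Sigma} f\rangle-\int_{\Sigma}A(\nabla^{\Sigma} f,\nabla^{\Sigma} f)=\int_{\Omega_1}|D^2u|^2+m\int_{\Omega_1}|\nabla u|^2.\notag
\end{align}
Integrating by parts on $\Sigma$ turns the first term into $2\int_\Sigma\langle\nabla u,\mathbf{n}\rangle\Delta^{\Sigma}f=-2\lambda_1\int_\Sigma f\langle\nabla u,\mathbf{n}\rangle$. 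Since $\mathbf n$ points into $\Omega_1$, the divergence theorem gives $\int_{\Omega_1}|\nabla u|^2=-\int_\Sigma f\langle\nabla u,\mathbf{n}\rangle$. The left side is therefore $2\lambda_1\int_{\Omega_1}|\nabla u|^2-\int_\Sigma A(\nabla^\Sigma f,\nabla^\Sigma f)$, and we obtain
\begin{align}
(2\lambda_1-m)\int_{\Omega_1}|\nabla u|^2=\int_{\Omega_1}|D^2u|^2+\int_\Sigma A(\nabla^\Sigma f,\nabla^\Sigma f).\notag
\end{align}
For $\Omega_2$ the inward normal is $-\mathbf n$ and the second fundamental form is $-A$. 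The same computation gives $(2\lambda_1-m)\int_{\Omega_2}|\nabla v|^2=\int_{\Omega_2}|D^2v|^2-\int_\Sigma A(\nabla^\Sigma f,\nabla^\Sigma f)$.

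The key step is to add these two identities so that the boundary term involving $A$ cancels:
\begin{align}
(2\lambda_1-m)\Big(\int_{\Omega_1}|\nabla u|^2+\int_{\Omega_2}|\nabla v|^2\Big)=\int_{\Omega_1}|D^2u|^2+\int_{\Omega_2}|D^2v|^2\geq 0.\notag
\end{align}
This cancellation is the only delicate point. Choi and Wang handle it by choosing the side on which $\int_\Sigma A(\nabla^\Sigma f,\nabla^\Sigma f)\ge 0$, which is equivalent. Finally, $f$ is nonconstant, so $u$ is nonconstant and $\int_{\Omega_1}|\nabla u|^2>0$. Hence $2\lambda_1-m\ge0$, that is, $\lambda_1\ge \frac m2$.

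The regularity of the harmonic extensions up to the boundary is standard. I expect no real obstacle beyond keeping the signs of $\mathbf n$ and $A$ consistent between the two sides. This identity is also the one later sharpened by not discarding the Hessian terms.
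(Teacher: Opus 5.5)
Your proposal is correct and is essentially the paper's own argument. The paper likewise applies Reilly's formula to the harmonic extensions $u$ and $v$ on the two sides and converts the boundary terms using $-\int_\Sigma f\langle\nabla u,\mathbf{n}\rangle=\int_{\Omega_1}|\nabla u|^2$ and $\int_\Sigma f\langle\nabla v,\mathbf{n}\rangle=\int_{\Omega_2}|\nabla v|^2$; adding the two identities cancels the $A(\nabla^\Sigma f,\nabla^\Sigma f)$ terms and gives $\int_{\Omega_1}|D^2u|^2+\int_{\Omega_2}|D^2v|^2=(2\lambda_1-m)\big(\int_{\Omega_1}|\nabla u|^2+\int_{\Omega_2}|\nabla v|^2\big)$.
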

\begin{proof}
    Let $f$ be an eigenfunction with eigenvalue $\lambda_1$, i.e.,
\begin{align}
  \Delta^{\Sigma}f=-\lambda_1 f.\notag
\end{align}
In the regions $\Omega_1 $ and $\Omega_2$, we, respectively, solve the following Dirichlet problem:
\begin{equation*}
  \left\{\begin{array}{c}
     \Delta u= 0, \\
 u|_{\Sigma} =f.
  \end{array}
\right.
\end{equation*}
and  
\begin{equation*}
  \left\{\begin{array}{c}
     \Delta v= 0, \\
 u|_{\Sigma} =f.
  \end{array}
\right.
\end{equation*}
 By Proposition \ref{Reilly} or \eqref{Relly'}, we have 
\begin{align}
 \int_{\Omega_1} |D^2 u|^2= -2\lambda_1 \int_{\Sigma} f \langle \nabla u,\mathbf{n}\rangle-\int_{\Sigma} A(\nabla^{\Sigma},\nabla^{\Sigma} f )-m\int_{\Omega_1} |\nabla u|^2 \notag
\end{align}
and 
\begin{align}
 \int_{\Omega_2} |D^2 v|^2= 2\lambda_1 \int_{\Sigma} f \langle \nabla v,\mathbf{n}\rangle +\int_{\Sigma} A(\nabla^{\Sigma},\nabla^{\Sigma} f )-m\int_{\Omega_2} |\nabla v|^2.\notag
\end{align}
Using integration by parts gives 
\begin{align}
   -\int_{\Omega_1} f \langle \nabla u,\mathbf{n}\rangle=\int_{\Omega_1}|\nabla u|^2\label{basic integral u}
   \end{align}
and 
\begin{align}
  \int_{\Omega_2} f \langle \nabla v,\mathbf{n}\rangle=\int_{\Omega_2}|\nabla v|^2,\label{basic integral v}
\end{align}
where we note that $\mathbf{n}$ is the outward-pointing unit normal vector field with respect to $\Omega_2$.

Then, combining the above results, we obtain
\begin{align}
\int_{\Omega_1} |D^2 u|^2+ \int_{\Omega_2} |D^2 v|^2=(2\lambda_1 -m) \Big (\int_{\Omega_1} |\nabla u|^2+\int_{\Omega_2} |\nabla v|^2\Big).\label{basic inegral}
\end{align}
In particular,
\begin{align}
    \lambda_1 \geq \frac{m}{2}.\notag
\end{align}
 \end{proof}

Since 
\begin{align}
   D^2 u (\mathbf{n},\mathbf{n})= D^2 v(\mathbf{n},\mathbf{n})=-\Delta^{\Sigma} f=\lambda_1 f 
\end{align}
by the equality \eqref{basic equ} and by $\Delta u=0, \Delta v=0$, 
the integrals 
$$\int_{\Omega_1} |D^2 u|^2+ \int_{\Omega_2} |D^2 v|^2$$ is greater than 0. This means the inequality above is  actually  strict. Hence, in the rest of the section, we are going to estimate the lower bound of 
$$\int_{\Omega_1} |D^2 u|^2+ \int_{\Omega_2} |D^2 v|^2.$$
In the following discussion, we always assume that 
$$ \Delta^{\Sigma} f=-\lambda_1 f $$ and 
in $\Omega_1$,\begin{equation*}
  \left\{\begin{array}{c}
     \Delta u= 0, \\
 u|_{\Sigma} =f;
  \end{array}
\right.
\end{equation*}
 in $\Omega_2$
\begin{equation*}
  \left\{\begin{array}{c}
     \Delta v= 0, \\
 u|_{\Sigma} =f.
  \end{array}
\right.
\end{equation*}
We put the information of $f,u,v$  into Theorem \ref{computation} and note that \eqref{basic integral u} and \eqref{basic integral v}. This  directly gives 
\begin{lem}\label{forumla}
\begin{align}
&5\int_{\Sigma}A\Big(\big( D_{\mathbf{n}}\nabla v\big)^{\top},\big( D_{\mathbf{n}}\nabla v\big)^{\top}\Big)- 5\int_{\Sigma}A\Big(\big( D_{\mathbf{n}}\nabla u\big)^{\top},\big( D_{\mathbf{n}}\nabla u\big)^{\top}\Big) \notag\\
&+2\lambda_1 \int_{\Sigma} f\Big\langle A,F^{\star}\big(D^2u \big)-F^{\star}\big(D^2v \big) \Big\rangle+ \int_{\Sigma} \textup{Trace}_{\Sigma} \Bigg (A\Big(\big( D_{()} \nabla v \big)^{\top} ,\big( D_{()} \nabla v \big)^{\top}\Big)\Bigg)\notag\\
&-\int_{\Sigma} \textup{Trace}_{\Sigma} \Bigg (A\Big(\big(D_{()} \nabla u \big)^{\top} ,\big( D_{()} \nabla u \big)^{\top}\Big)\Bigg)\notag\\
=&2(2m\lambda_1-2\lambda_1 ^2-m^2-m)\int_{\Omega_1}|\nabla u|^2+ 2(2m\lambda_1-2\lambda_1 ^2-m^2-m)\int_{\Omega_2}|\nabla v|^2\notag\\
&+\int_{\Omega_1}|D^3 u|^2 + \int_{\Omega_2}|D^3 v|^2.\notag
\end{align}
\end{lem}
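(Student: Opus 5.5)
We substitute the data $\Delta^{\Sigma} f=-\lambda_1 f$, $\Delta u=\Delta v=0$ into Theorem \ref{computation} and simplify each term using only integration by parts on $\Sigma$ together with \eqref{basic integral u} and \eqref{basic integral v}. No new geometric input is needed; the argument is pure bookkeeping.

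\textbf{Left-hand side of Theorem \ref{computation}.} We treat the terms one at a time.

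First, since $\Delta v-\Delta u=0$, the term $2\int_\Sigma A(\nabla^\Sigma(\Delta v-\Delta u),\nabla^\Sigma f)$ vanishes.

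Next, $\Delta u-2\Delta^\Sigma f=2\lambda_1 f$, and therefore
$$\Delta^\Sigma(\Delta u-2\Delta^\Sigma f)=-2\lambda_1^2 f .$$
Hence
$$2\int_\Sigma\langle\nabla u,\mathbf n\rangle\,\Delta^\Sigma(\Delta u-2\Delta^\Sigma f)=-4\lambda_1^2\int_\Sigma f\langle\nabla u,\mathbf n\rangle=4\lambda_1^2\int_{\Omega_1}|\nabla u|^2 .$$
In the same way, the $v$-term $-2\int_\Sigma\langle\nabla v,\mathbf n\rangle\,\Delta^\Sigma(\cdots)$ equals $4\lambda_1^2\int_{\Omega_2}|\nabla v|^2$.

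The term $4m\int_\Sigma(\langle\nabla v,\mathbf n\rangle-\langle\nabla u,\mathbf n\rangle)\Delta^\Sigma f$ becomes $-4m\lambda_1\int_\Sigma f\,(\langle\nabla v,\mathbf n\rangle-\langle\nabla u,\mathbf n\rangle)$. By \eqref{basic integral u} and \eqref{basic integral v}, this equals $-4m\lambda_1\big(\int_{\Omega_1}|\nabla u|^2+\int_{\Omega_2}|\nabla v|^2\big)$.

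Finally, $\Delta u-\Delta^\Sigma f=\lambda_1 f=\Delta v-\Delta^\Sigma f$. So the two $\langle A,F^\star(D^2\cdot)\rangle$ terms combine to $2\lambda_1\int_\Sigma f\langle A,F^\star(D^2u)-F^\star(D^2v)\rangle$. The $A((D_{\mathbf n}\nabla\cdot)^\top,\cdot)$ terms and the trace terms remain unchanged.

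\textbf{Right-hand side of Theorem \ref{computation}.} The harmonicity of $u$ and $v$ kills $|\nabla\Delta u|^2$, $(\Delta u)^2$ and $\langle\nabla\Delta u,\nabla u\rangle$, together with their $v$-analogues. What remains is
$$\int_{\Omega_1}|D^3u|^2+\int_{\Omega_2}|D^3v|^2-2m(m+1)\Big(\int_{\Omega_1}|\nabla u|^2+\int_{\Omega_2}|\nabla v|^2\Big).$$

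\textbf{Collecting terms.} We move the gradient-energy terms $(4\lambda_1^2-4m\lambda_1)\big(\int_{\Omega_1}|\nabla u|^2+\int_{\Omega_2}|\nabla v|^2\big)$ from the left-hand side to the right-hand side. Combined with $-2m(m+1)$, the resulting coefficient is
$$-2m(m+1)-4\lambda_1^2+4m\lambda_1=2(2m\lambda_1-2\lambda_1^2-m^2-m),$$
which is exactly the claimed identity.

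The only real obstacle is sign bookkeeping. Here $\mathbf n$ points into $\Omega_1$ but out of $\Omega_2$, which is why \eqref{basic integral u} and \eqref{basic integral v} carry opposite signs. We double-check that both $\lambda_1^2$ contributions come out with the same sign, and that the $4m$ term gives $-4m\lambda_1$ times the total energy rather than a difference of the two energies.
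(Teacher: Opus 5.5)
Your proposal is correct and follows the paper's own argument. The paper likewise substitutes $\Delta u=\Delta v=0$ and $\Delta^{\Sigma}f=-\lambda_1 f$ into Theorem \ref{computation}, then converts the flux terms $\int_\Sigma f\langle\nabla u,\mathbf{n}\rangle$ and $\int_\Sigma f\langle\nabla v,\mathbf{n}\rangle$ into Dirichlet energies via \eqref{basic integral u} and \eqref{basic integral v}. Your sign bookkeeping is right: each $\lambda_1^2$-term contributes $+4\lambda_1^2$ times its energy, and the $4m$-term contributes $-4m\lambda_1$ times the total energy, which yields the coefficient $2(2m\lambda_1-2\lambda_1^2-m^2-m)$.
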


Applying Proposition \ref{function in tubular} to $|D^2 u|^2$ in $\Omega_1$ and to $|D^2 v|^2$ in $\Omega_2$  and combining the result with Lemma \ref{forumla} yield
\begin{lem} \label{final foruma 1}
 For any $\epsilon>0$, we have
\begin{align}
&\int_{\Sigma} \left (|D^2 u|^2+ |D^2 v|^2\right) \notag\\
\leq& \left[(2\lambda_1-m)\cdot\left(2\sqrt{\frac{m}{m-1}}|A|_{\max}+\frac{1}{\epsilon} \right)+2\epsilon (2\lambda_1^2-2 m\lambda_1 + m^2+m)\right]\notag\\
&\times \left(\int_{\Omega_1}|\nabla u|^2+ \int_{\Omega_2}|\nabla v|^2 \right)\notag\\    
&+5\epsilon  \bigintssss_{\Sigma}\bigg(A\Big(\big( D_{\mathbf{n}}\nabla v\big)^{\top},\big( D_{\mathbf{n}}\nabla v\big)^{\top}\Big)-A\Big(\big( D_{\mathbf{n}}\nabla u\big)^{\top},\big( D_{\mathbf{n}}\nabla u\big)^{\top}\Big)\bigg)\notag\\
&+\epsilon \bigintsss_{\Sigma} \Bigg[\textup{Trace}_{\Sigma} \Bigg (A\Big(\big( D_{()} \nabla v \big)^{\top} ,\big( D_{()} \nabla v \big)^{\top}\Big)\Bigg)\notag\\
& \qquad\qquad- \textup{Trace}_{\Sigma} \Bigg (A\Big(\big(D_{()} \nabla u \big)^{\top} ,\big( D_{()} \nabla u \big)^{\top}\Big)\Bigg)\Bigg]\notag\\
&+2\epsilon \lambda_1 \int_{\Sigma} f\Big\langle A,F^{\star}\big(D^2u \big)-F^{\star}\big(D^2v \big) \Big\rangle.\notag
\end{align}
\end{lem}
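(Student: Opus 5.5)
Apply Proposition \ref{function in tubular} with $\phi=|D^2u|^2$ on $\Omega_1$ and with $\phi=|D^2v|^2$ on $\Omega_2$; the proposition applies verbatim to $\Omega_2$ with $-\mathbf{n}$. We are in the case $|A|_{\max}^2>m$, so $\max\{|A|_{\max},\sqrt m\}=|A|_{\max}$, and adding the two inequalities gives
\begin{align*}
\int_{\Sigma}\left(|D^2u|^2+|D^2v|^2\right)\le 2\sqrt{\frac{m}{m-1}}|A|_{\max}\Big(\int_{\Omega_1}|D^2u|^2+\int_{\Omega_2}|D^2v|^2\Big)+\int_{\Omega_1}\big|\nabla|D^2u|^2\big|+\int_{\Omega_2}\big|\nabla|D^2v|^2\big|.
\end{align*}
By \eqref{basic inegral}, the first term on the right equals $2\sqrt{\frac{m}{m-1}}|A|_{\max}(2\lambda_1-m)\big(\int_{\Omega_1}|\nabla u|^2+\int_{\Omega_2}|\nabla v|^2\big)$. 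This produces the $|A|_{\max}$ part of the coefficient.

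For the gradient terms, Kato's inequality gives $|\nabla|D^2u|^2|=2|D^2u|\,|\nabla|D^2u||\le 2|D^2u|\,|D^3u|$. By Young's inequality,
\begin{align*}
2|D^2u|\,|D^3u|\le \frac{1}{\epsilon}|D^2u|^2+\epsilon|D^3u|^2,
\end{align*}
and similarly for $v$. The $\frac1\epsilon$ part is again controlled by \eqref{basic inegral}, and it yields the term $(2\lambda_1-m)\frac{1}{\epsilon}\big(\int_{\Omega_1}|\nabla u|^2+\int_{\Omega_2}|\nabla v|^2\big)$.

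It remains to handle $\epsilon\big(\int_{\Omega_1}|D^3u|^2+\int_{\Omega_2}|D^3v|^2\big)$. For this we solve Lemma \ref{forumla} for the $D^3$ integrals:
\begin{align*}
\int_{\Omega_1}|D^3u|^2+\int_{\Omega_2}|D^3v|^2=2(2\lambda_1^2-2m\lambda_1+m^2+m)\Big(\int_{\Omega_1}|\nabla u|^2+\int_{\Omega_2}|\nabla v|^2\Big)+(\text{boundary terms}).
\end{align*}
The boundary terms are exactly the left-hand side of Lemma \ref{forumla}: $5\int_\Sigma[A((D_{\mathbf n}\nabla v)^\top,\cdot)-A((D_{\mathbf n}\nabla u)^\top,\cdot)]$, the difference of the trace terms, and $2\lambda_1\int_\Sigma f\langle A,F^\star(D^2u)-F^\star(D^2v)\rangle$. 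Multiplying by $\epsilon$ and substituting gives the coefficient $2\epsilon(2\lambda_1^2-2m\lambda_1+m^2+m)$ and reproduces the remaining lines of the statement exactly. Collecting the three contributions proves the lemma.

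The only delicate point is the sign bookkeeping. The left-hand side of Lemma \ref{forumla} already carries the $\Omega_2$ orientation (via $-\mathbf n$, $-A$), so the $v$-terms appear with a plus sign and the $u$-terms with a minus sign, as in the statement. We must also check that Lemma \ref{forumla} was derived using $\Delta u=\Delta v=0$, $\Delta^\Sigma f=-\lambda_1 f$, and the identities \eqref{basic integral u}, \eqref{basic integral v}, which convert $\int_\Sigma f\langle\nabla u,\mathbf n\rangle$-type terms into the Dirichlet energies. No estimate is lost beyond Kato's and Young's inequalities, so the lemma holds for every $\epsilon>0$.
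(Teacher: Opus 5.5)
Your proposal is correct and follows the paper's proof essentially step for step. Both arguments apply Proposition \ref{function in tubular} to $|D^2u|^2$ on $\Omega_1$ and $|D^2v|^2$ on $\Omega_2$, bound $|\nabla|D^2u|^2|\le 2|D^2u|\,|D^3u|$, use Young's inequality with $\epsilon$, and then invoke \eqref{basic inegral} and Lemma \ref{forumla} to replace the Hessian and third-derivative integrals. The only difference is cosmetic: the paper gets the gradient bound directly by Cauchy--Schwarz on $\nabla_j|D^2u|^2=2\sum_{i,k}D^2u(\tilde{e_i},\tilde{e_k})\,D^3u(\tilde{e_i},\tilde{e_k},\tilde{e_j})$, which avoids the non-differentiability of $|D^2u|$ where $D^2u=0$ that your Kato formulation passes over.
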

\begin{proof}
Applying Proposition \ref{function in tubular} to $|D^2 u|^2$ in $\Omega_1$ and to $|D^2 v|^2$ in $\Omega_2$ yields
\begin{align}
 \int_{\Sigma} |D^2 u|^2 \leq 2\sqrt{\frac{m}{m-1}}\cdot|A|_{\max}\int_{\Omega_1}|D^2u|^2 +\int_{\Omega_1} |\nabla |D^2 u|^2 | \label{corollary application u}
\end{align}
and 
\begin{align}
 \int_{\Sigma} |D^2 v|^2 \leq 2\sqrt{\frac{m}{m-1}} \cdot|A|_{\max}\int_{\Omega_2}|D^2v|^2 +\int_{\Omega_2} |\nabla |D^2 v|^2 |.\label{corollary application v}
\end{align}

 Choose a local orthonormal frame $\{\tilde{e_i}\}_{i=1}^{m+1}$.
Then 
\begin{align}
|\nabla |D^2 u|^2|^2=&4\sum\limits_{j=1}^{m+1} \Big(\sum\limits_{1\leq i,k \leq m+1} D^2u (\tilde{e_i},\tilde{e_k} )^2 D^3 u(\tilde{e_i},\tilde{e_k},\tilde{e_j})\Big)^2\notag\\
\leq & 4 |D^2 u|^2\cdot |D^3 u|^2.\notag
\end{align}
where we use the Cauchy-Schwarz inequality, and  the case of $v$ is the same.

That is,
\begin{align}
|\nabla |D^2 u|^2| \leq 2 |D^2u| \cdot |D^3 u| \label{D^3 u estimate}
\end{align}
and 
\begin{align}
|\nabla |D^2 v|^2| \leq 2 |D^2v| \cdot |D^3 v| \label{D^3 v estimate}.
\end{align}
Put \eqref{D^3 u estimate} and \eqref{D^3 v estimate} into \eqref{corollary application u} and \eqref{corollary application v}, respectively, and then add \eqref{corollary application u} and \eqref{corollary application v}.
We obtain,  for any $\epsilon>0$,
\begin{align}
\int_{\Sigma} \left (|D^2 u|^2+ |D^2 v|^2\right) \leq &2\sqrt{\frac{m}{m-1}}\cdot|A|_{\max}\left(\int_{\Omega_1}|D^2u|^2+ \int_{\Omega_2}|D^2v|^2 \right)\notag\\ 
&+2\int_{\Omega_1} |D^2u| \cdot |D^3 u| +2 \int_{\Omega_2} |D^2v| \cdot |D^3 v|\notag\\
\leq & \left(2\sqrt{\frac{m}{m-1}}\cdot|A|_{\max}+\frac{1}{\epsilon} \right)\left(\int_{\Omega_1}|D^2u|^2+ \int_{\Omega_2}|D^2v|^2 \right)\notag\\
&+ \epsilon \left(\int_{\Omega_1} |D^3u|^2 +\int_{\Omega_2} |D^3v|^2     \right),\label{form forumla}
\end{align}
where use the inequality 
$$2ab=2\cdot \frac{a}{\sqrt{\epsilon}} \cdot \sqrt{\epsilon} b \leq \frac{a^2}{\epsilon} +\epsilon b^2 .$$
Then the lemma follows from Lemma \ref{forumla} and \eqref{basic inegral}.
\end{proof}
Before estimating the last three terms in the inequality, we first list the basic information $u$ and $v$ on the boundary for subsequent use.
Note that in a local orthonormal frame $\{e_i\}_{i=1}^{m}$ on $\Sigma$,
\begin{align}
D^2 u(e_i,e_j)=\big( D^{\Sigma}\big)^2 f(e_i,e_j)-A(e_i,e_j) \langle \nabla u,\mathbf{n} \rangle, \notag\\
D^2 v(e_i,e_j)=\big( D^{\Sigma}\big)^2 f(e_i,e_j)-A(e_i,e_j) \langle \nabla v,\mathbf{n} \rangle,\notag\\
D^2 u(\mathbf{n},e_i) =D^2 u(e_i,\mathbf{n})=A(\nabla^{\Sigma} f,e_i)+ \langle \nabla^{\Sigma}\langle\nabla u,\mathbf{n} \rangle,e_i  \rangle, \notag\\
D^2 v(\mathbf{n},e_i) =D^2 v(e_i,\mathbf{n})=A(\nabla^{\Sigma} f,e_i)+ \langle \nabla^{\Sigma}\langle\nabla v,\mathbf{n} \rangle,e_i  \rangle,\notag\\
D^2 u(\mathbf{n},\mathbf{n})=D^2 v(\mathbf{n},\mathbf{n})=-\Delta^{\Sigma} f=\lambda_1 f,\label{boundary information}
\end{align}
which implies
\begin{align}
\Big|F^{\star}\big(D^2u \big)-F^{\star}\big(D^2v \big) \Big|^2=&\sum\limits_{1 \leq i,j\leq m} \big(D^2 u(e_i,e_j)-D^2 v (e_i,e_j)\big)^2 \notag\\
=&|A|^2 \big( \langle \nabla v ,\mathbf{n}  \rangle-\langle \nabla u,\mathbf{n} \rangle  \big)^2, \label{boundary information 1}\\
\Big|\big (D_{\mathbf{n}} \nabla u)^{\top}-\big(D_{\mathbf{n}}\nabla v)^{\top}\Big|^2=&\sum\limits_{i=1}^{m} \big(D^2u(e_i,\mathbf{n})-D^2 v(e_i,\mathbf{n})\big)^2\notag\\
=&\big|\nabla^{\Sigma} \langle \nabla u,\mathbf{n}\rangle
-\nabla^{\Sigma} \langle \nabla v,\mathbf{n} \rangle\big|^2,\label{boundary information 2}
\end{align}
\begin{align}
|D^2 u|^2+|D^2 v|^2=&\Big (D^2 u(\mathbf{n},\mathbf{n})  \Big)^2+ \Big (D^2 v(\mathbf{n},\mathbf{n})  \Big)^2+2 \Big|\big (D_{\mathbf{n}} \nabla u)^{\top}\Big|^2+2\Big|\big (D_{\mathbf{n}} \nabla v)^{\top}\Big|^2\notag\\
&+\Big|F^{\star}\big(D^2u \big)\Big|^2+\Big|F^{\star}\big(D^2v \big) \Big|^2\notag\\
=&2\lambda_1 ^2 f^2 + \Big|\big( D_{\mathbf{n}}\nabla u\big)^{\top}+\big( D_{\mathbf{n}}\nabla v\big)^{\top}\Big|^2 +\big|\nabla^{\Sigma} \langle \nabla u,\mathbf{n}\rangle
-\nabla^{\Sigma} \langle \nabla v,\mathbf{n} \rangle\big|^2\notag\\
&+\frac{1}{2}\Big|F^{\star}\big(D^2u \big)+F^{\star}\big(D^2v \big) \Big|^2
+\frac{1}{2}\cdot|A|^2 \big( \langle \nabla v ,\mathbf{n}  \rangle-\langle \nabla u,\mathbf{n} \rangle  \big)^2.\label{boundary information 3}
\end{align}
Now, let's estimate the last three terms. 

First, for the term $$5\epsilon A\Big(\big( D_{\mathbf{n}}\nabla v\big)^{\top},\big( D_{\mathbf{n}}\nabla v\big)^{\top}\Big)-5\epsilon A\Big(\big( D_{\mathbf{n}}\nabla u\big)^{\top},\big( D_{\mathbf{n}}\nabla u\big)^{\top}\Big),$$ we obtain
\begin{lem}\label{D^2u,v(n,)}
   For any $\epsilon>0$, we have 
   \begin{align}
   & 5\epsilon A\Big(\big( D_{\mathbf{n}}\nabla v\big)^{\top},\big( D_{\mathbf{n}}\nabla v\big)^{\top}\Big)-5\epsilon A\Big(\big( D_{\mathbf{n}}\nabla u\big)^{\top},\big( D_{\mathbf{n}}\nabla u\big)^{\top}\Big)\notag\\
   \leq & \Big|\big (D_{\mathbf{n}} \nabla u)^{\top}+\big(D_{\mathbf{n}}\nabla v)^{\top}\Big|^2+ \frac{25\epsilon^2}{4}\cdot\frac{m-1}{m}|A|^2\cdot\big|\nabla^{\Sigma} \langle \nabla u,\mathbf{n}\rangle
-\nabla^{\Sigma} \langle \nabla v,\mathbf{n} \rangle\big|^2.\notag
\end{align}
\end{lem}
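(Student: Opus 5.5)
Write $a=(D_{\mathbf n}\nabla u)^\top$ and $b=(D_{\mathbf n}\nabla v)^\top$, both tangent to $\Sigma$. The plan is a polarization identity followed by a pointwise operator-norm bound on $A$ and Young's inequality. Since $A$ is a symmetric bilinear form on $T\Sigma$,
\begin{align}
A(b,b)-A(a,a)=A(b-a,\,b+a).\notag
\end{align}
By \eqref{boundary information 2}, $|a-b|=\big|\nabla^{\Sigma}\langle\nabla u,\mathbf n\rangle-\nabla^{\Sigma}\langle\nabla v,\mathbf n\rangle\big|$, which is the quantity on the right-hand side of the lemma. It therefore suffices to bound $5\epsilon\,|A(b-a,b+a)|$ pointwise on $\Sigma$.

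Next we control the operator norm of $A$. In a principal frame, $|A(X,Y)|\le \max_i|\mu_i|\,|X||Y|$. The computation in the proof of Proposition \ref{function in tubular}, using $\sum_i\mu_i=0$, gives $|A|^2\ge \mu_1^2+\frac{(\sum_{i\ge2}\mu_i)^2}{m-1}=\frac{m}{m-1}\mu_1^2$. The same argument applies to each index $i$, so $\mu_i^2\le \frac{m-1}{m}|A|^2$ for every $i$. Hence
\begin{align}
|A(b-a,b+a)|\le \sqrt{\tfrac{m-1}{m}}\,|A|\,|a-b|\,|a+b|.\notag
\end{align}

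Finally, apply $xy\le x^2+\frac14 y^2$ with $x=|a+b|$ and $y=5\epsilon\sqrt{\frac{m-1}{m}}|A|\,|a-b|$. This yields
\begin{align}
5\epsilon\big(A(b,b)-A(a,a)\big)\le |a+b|^2+\frac{25\epsilon^2}{4}\cdot\frac{m-1}{m}|A|^2\,|a-b|^2,\notag
\end{align}
which is exactly the claimed inequality.

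No step is a real obstacle. The only point needing care is the eigenvalue bound $\mu_i^2\le\frac{m-1}{m}|A|^2$ for all $i$, not just for the largest principal curvature. It follows from Cauchy–Schwarz applied to the remaining $m-1$ curvatures, whose sum is $-\mu_i$ by minimality.
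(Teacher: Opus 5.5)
Your proof is correct and is essentially the paper's argument. The paper works in a principal frame, writes $\mu_i\big(D^2v(e_i,\mathbf{n})^2-D^2u(e_i,\mathbf{n})^2\big)$ as $\mu_i$ times the sum and difference, applies Young's inequality componentwise, and then uses $\mu_i^2\le\frac{m-1}{m}|A|^2$. You instead polarize invariantly and bound the operator norm of $A$ before applying Young; these are the same ingredients in a slightly different order, and they give the identical constant.
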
    
   \begin{proof}
Fix $p\in \Sigma$ and choose a local orthonormal frame $\{e_i\}_{i=1}^{m}$ near $p$ on $\Sigma$ such that 
$$A(e_i,e_j)=\delta_{ij} \mu_i, 1\leq i,j\leq m.  $$
Then, at $p$, by \eqref{boundary information 2} and 
$$ |A|^2=\sum\limits_{i=1}^{m} \mu_i^2 \geq \mu_i^2 +\frac{\left(\sum\limits_{j\neq i} \mu_j \right)^2}{m-1}=\frac{m}{m-1}\mu_i^2,$$
we have
\begin{align}
   & 5\epsilon A\Big(\big( D_{\mathbf{n}}\nabla v\big)^{\top},\big( D_{\mathbf{n}}\nabla v\big)^{\top}\Big)-5\epsilon A\Big(\big( D_{\mathbf{n}}\nabla u\big)^{\top},\big( D_{\mathbf{n}}\nabla u\big)^{\top}\Big)\notag\\
   =&5\epsilon \sum\limits_{i=1}^{m}\mu_i \Big( D^2 v (e_i,\mathbf{n} ) \Big)^2 -5\epsilon \sum\limits_{i=1}^{m}\mu_i \Big( D^2 u (e_i,\mathbf{n} ) \Big)^2\notag\\
   =&5\epsilon \sum\limits_{i=1}^{m}\mu_i \Big( D^2 v (e_i,\mathbf{n} )+  D^2 u (e_i,\mathbf{n} )\Big)\cdot \Big( D^2 v (e_i,\mathbf{n} )- D^2 u (e_i,\mathbf{n} )\Big)\notag\\
   \leq & \sum\limits_{i=1}^{m} \Big( D^2 v (e_i,\mathbf{n} )+  D^2 u (e_i,\mathbf{n} )\Big)^2 +\frac{25\epsilon^2}{4}\sum\limits_{i=1}^{m} 
   \mu_i ^2\Big( D^2 v (e_i,\mathbf{n} )- D^2 u (e_i,\mathbf{n} )\Big)^2 \notag\\
   \leq &\Big|\big (D_{\mathbf{n}} \nabla u)^{\top}+\big(D_{\mathbf{n}}\nabla v)^{\top}\Big|^2+ \frac{25\epsilon^2}{4}\cdot\frac{m-1}{m}|A|^2\sum\limits_{i=1}^{m}
 \Big( D^2 u (e_i,\mathbf{n})-D^2 v(e_i,\mathbf{n} )\Big)^2\notag\\  
 =&\Big|\big (D_{\mathbf{n}} \nabla u)^{\top}+\big(D_{\mathbf{n}}\nabla v)^{\top}\Big|^2+ \frac{25\epsilon^2}{4}\cdot\frac{m-1}{m}|A|^2\cdot\big|\nabla^{\Sigma} \langle \nabla u,\mathbf{n}\rangle
-\nabla^{\Sigma} \langle \nabla v,\mathbf{n} \rangle\big|^2,\notag
   \end{align}
where we use the inequality 
$$ 5\epsilon ab =\sqrt{2}a \cdot \frac{5\epsilon}{\sqrt{2}} b\leq a^2 +\frac{25\epsilon^2}{4}b^2,$$
and this holds for any point in $\Sigma$.

   This completes the proof of this lemma.
    \end{proof}

Second, for the remaining term 
\begin{align}
&\epsilon \cdot \text{Trace}_{\Sigma} \Bigg (A\Big(\big( D_{()} \nabla v \big)^{\top} ,\big( D_{()} \nabla v \big)^{\top}\Big)\Bigg)- \epsilon \cdot \text{Trace}_{\Sigma} \Bigg (A\Big(\big(D_{()} \nabla u \big)^{\top} ,\big( D_{()} \nabla u \big)^{\top}\Big)\Bigg)\notag\\
&+2\epsilon \lambda_1  f\Big\langle A,F^{\star}\big(D^2 u \big)-F^{\star}\big(D^2 v \big) \Big\rangle,\notag
\end{align}
we obtain
\begin{lem}\label{D^2u,v(,)}
    For any absolute constant $\epsilon>0$ and  any  positive function $\beta$ defined on $\Sigma$, we have 
    \begin{align}
&\epsilon \cdot \textup{Trace}_{\Sigma} \Bigg (A\Big(\big( D_{()} \nabla v \big)^{\top} ,\big( D_{()} \nabla v \big)^{\top}\Big)\Bigg)- \epsilon \cdot \textup{Trace}_{\Sigma} \Bigg (A\Big(\big(D_{()} \nabla u \big)^{\top} ,\big( D_{()} \nabla u \big)^{\top}\Big)\Bigg)\notag\\
&+2\epsilon \lambda_1  f\Big\langle A,F^{\star}\big(D^2 u \big)-F^{\star}\big(D^2 v \big) \Big\rangle\notag\\
\leq & \frac{1}{2} \Big|F^{\star}\big(D^2 u \big)+F^{\star}\big(D^2 v \big) \Big|^2-\frac{2\lambda_1^2}{m} f^2 +\frac{\epsilon^2(m^2-3m+3)}{2m(m-1)}|A|^4\big( \langle \nabla v ,\mathbf{n}  \rangle-\langle \nabla u,\mathbf{n} \rangle  \big)^2 \notag\\
&+ \epsilon \beta |A|^2 \big( \langle \nabla v ,\mathbf{n}  \rangle-\langle \nabla u,\mathbf{n} \rangle  \big)^2+ \left(\frac{m+1}{m}\right)^2 \frac{\epsilon\lambda_1 ^2}{\beta} \cdot|A|^2 f^2.\notag
    \end{align}
\end{lem}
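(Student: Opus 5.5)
The inequality is pointwise, so I will prove it at an arbitrary $p\in\Sigma$ and then note that $p$ was arbitrary. At $p$ I choose a local orthonormal frame $\{e_i\}_{i=1}^m$ with $A(e_i,e_j)=\mu_i\delta_{ij}$. I write $P_{ij}=D^2u(e_i,e_j)$ and $Q_{ij}=D^2v(e_i,e_j)$, so $P=F^{\star}(D^2u)$ and $Q=F^{\star}(D^2v)$. I also set $S=P+Q$ and $w=\langle\nabla v,\mathbf n\rangle-\langle\nabla u,\mathbf n\rangle$.

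\emph{Boundary identities.} By \eqref{boundary information}, $Q-P=A\,w$, so $Q_{ij}-P_{ij}=\mu_i\delta_{ij}w$. Since $H=0$ and $\Delta^\Sigma f=-\lambda_1 f$, we have $\operatorname{tr}P=\operatorname{tr}Q=-\lambda_1 f$, hence $\operatorname{tr}S=-2\lambda_1 f$.

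\emph{Rewriting the left-hand side.} The trace term is $\sum_{i,j}\mu_jP_{ij}^2$, and similarly for $v$. Their difference is therefore
\[
\epsilon\sum_{i,j}\mu_j(Q_{ij}+P_{ij})(Q_{ij}-P_{ij})=\epsilon w\sum_i\mu_i^2S_{ii}.
\]
The last term is $2\epsilon\lambda_1 f\langle A,P-Q\rangle=-2\epsilon\lambda_1 f|A|^2w$. Only the diagonal of $S$ enters these terms.

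\emph{Splitting off the traceless part.} Write $S=\mathring S-\frac{2\lambda_1 f}{m}I$, where $\mathring S$ is traceless. Then the left-hand side becomes
\[
\epsilon w\sum_i\mu_i^2\mathring S_{ii}-\tfrac{2\epsilon\lambda_1 f}{m}|A|^2w-2\epsilon\lambda_1f|A|^2w
=\epsilon w\sum_i\mu_i^2\mathring S_{ii}-2\epsilon\tfrac{m+1}{m}\lambda_1 f|A|^2 w .
\]
On the right-hand side, $\frac12|S|^2-\frac{2\lambda_1^2}{m}f^2=\frac12|\mathring S|^2\ge\frac12\sum_i\mathring S_{ii}^2$. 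So it suffices to bound the two pieces separately.

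\emph{Second piece.} By Cauchy--Schwarz with the weight $\beta$,
\[
-2\epsilon\tfrac{m+1}{m}\lambda_1 f|A|^2w\le\epsilon\beta|A|^2w^2+\left(\tfrac{m+1}{m}\right)^2\tfrac{\epsilon\lambda_1^2}{\beta}|A|^2f^2 .
\]
This gives exactly the last two terms of the statement.

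\emph{First piece.} Because $\mathring S$ is traceless, I may replace $\mu_i^2$ by $\mu_i^2-\frac{|A|^2}{m}$. Then
\[
\epsilon w\sum_i\Big(\mu_i^2-\tfrac{|A|^2}{m}\Big)\mathring S_{ii}\le\tfrac12\sum_i\mathring S_{ii}^2+\tfrac{\epsilon^2w^2}{2}\Big(\sum_i\mu_i^4-\tfrac{|A|^4}{m}\Big).
\]

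\emph{Main obstacle: the quartic inequality.} The remaining step is the algebraic estimate
\[
\sum_i\mu_i^4\le\frac{m^2-3m+3}{m(m-1)}|A|^4\qquad\text{whenever }\sum_i\mu_i=0 .
\]
This is Okumura's sharp inequality; its extremal configuration is one eigenvalue against $m-1$ equal ones. I would prove it via Lagrange multipliers on $\{\sum\mu_i=0,\ \sum\mu_i^2=1\}$, where critical points have at most two distinct values. Alternatively, one can simply cite Okumura. Dropping the favorable $-|A|^4/m$ then yields the coefficient $\frac{\epsilon^2(m^2-3m+3)}{2m(m-1)}|A|^4w^2$. Summing the two pieces gives the stated inequality at $p$, hence on all of $\Sigma$.
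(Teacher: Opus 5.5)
Your argument is essentially the paper's: the paper likewise rewrites the left side as $\epsilon\sum_{i,j}\mu_i\bigl(S_{ij}+\tfrac{2\lambda_1}{m}\delta_{ij}f\bigr)(Q_{ij}-P_{ij})+2\bigl(1+\tfrac{1}{m}\bigr)\epsilon\lambda_1|A|^2fw$, i.e.\ it pairs the traceless part of $S$ against $Q-P$, then applies the same two Young inequalities (one weighted by $\beta$) and the same quartic bound, so your subtraction of $|A|^2/m$ is only a refinement that you later discard. There are two harmless slips. First, the boundary formula actually gives $Q-P=-A\,w$, not $A\,w$; this flips the sign of your rewritten left-hand side, but that is invisible after the Young inequalities. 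Second, the Lagrange system $4\mu_i^3=2\nu\mu_i+\lambda$ does have critical points with three distinct values (e.g.\ $m=6$, $\mu=\tfrac12(1,1,0,0,-1,-1)$), and the quartic bound is a quartic analogue of Okumura's cubic lemma rather than that lemma itself, so this step needs a second-order or direct argument---though the paper itself only invokes Lagrange multipliers there.
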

 \begin{proof}
Fix $p\in \Sigma$ and choose a local orthonormal frame $\{e_i\}_{i=1}^{m}$ near $p$ on $\Sigma$ such that 
$$A(e_i,e_j)=\delta_{ij} \mu_i, 1\leq i,j\leq m.  $$
 Then at $p$, by \eqref{boundary information}, we  obtain
\begin{align}
&\epsilon \cdot \text{Trace}_{\Sigma} \Bigg (A\Big(\big( D_{()} \nabla v \big)^{\top} ,\big( D_{()} \nabla v \big)^{\top}\Big)\Bigg)- \epsilon \cdot \text{Trace}_{\Sigma} \Bigg (A\Big(\big(D_{()} \nabla u \big)^{\top} ,\big( D_{()} \nabla u \big)^{\top}\Big)\Bigg)\notag\\
&+2\epsilon \lambda_1  f\Big\langle A,F^{\star}\big(D^2 u \big)-F^{\star}\big(D^2 v \big) \Big\rangle\notag\\
=&\epsilon\sum_{1\leq i,j \leq m} \mu_i\Big(D^2 v(e_i,e_j) \Big)^2-\epsilon\sum_{1\leq i,j \leq m} \mu_i\Big(D^2 u(e_i,e_j) \Big)^2\notag\\
&+ 2\epsilon \lambda_1 f \sum\limits_{i=1}^{m}\mu_i \Big (D^2 u (e_i,e_i)-D^2 v(e_i,e_i)\Big)\notag\\
=&\epsilon \sum_{1\leq i,j \leq m}\mu_i \Big(D^2 v(e_i,e_j)+D^2 u(e_i,e_j) +\frac{2\lambda_1}{m} \delta_{ij} f\Big)\cdot \Big(D^2 v(e_i,e_j)-D^2 u(e_i,e_j)\Big)
\notag\\
&+ 2 \left(1+\frac{1}{m}\right) \epsilon \lambda_1 |A|^2 f\big( \langle \nabla v ,\mathbf{n}  \rangle-\langle \nabla u,\mathbf{n} \rangle  \big)\notag \\ 
\leq & \frac{1}{2}\sum_{1\leq i,j \leq m}  \Big(D^2 v(e_i,e_j)+D^2 u(e_i,e_j)+\frac{2\lambda_1}{m} \delta_{ij} f\Big)^2 \notag\\
&+\frac{\epsilon^2}{2}\sum\limits_{1\leq i,j \leq m}
\mu_i ^2  \Big(D^2 v(e_i,e_j)-D^2 u(e_i,e_j)\Big)^2\notag\\
&+ \epsilon \beta |A|^2 \big( \langle \nabla v ,\mathbf{n}  \rangle-\langle \nabla u,\mathbf{n} \rangle  \big)^2+ \left(1+\frac{1}{m}\right)^2 \frac{\epsilon\lambda_1 ^2}{\beta} \cdot|A|^2 f^2\notag\\
=&\frac{1}{2} \Big|F^{\star}\big(D^2 u \big)+F^{\star}\big(D^2 v \big) \Big|^2-\frac{2\lambda_1^2}{m} f^2 +\frac{\epsilon^2}{2}\big( \langle \nabla v ,\mathbf{n}  \rangle-\langle \nabla u,\mathbf{n} \rangle  \big)^2\sum\limits_{i=1}^{m}\mu_i ^4\notag\\
&+ \epsilon \beta |A|^2 \big( \langle \nabla v ,\mathbf{n}  \rangle-\langle \nabla u,\mathbf{n} \rangle  \big)^2+ \left(1+\frac{1}{m}\right)^2 \frac{\epsilon\lambda_1 ^2}{\beta} \cdot|A|^2 f^2,\label{integral estimate i,j}
\end{align}
 where $\beta>0$ is an arbitrary positive function defined on $\Sigma$, and we use inequalities:
 $$\epsilon ab \leq \frac{1}{2} a^2+\frac{\epsilon^2}{2} b^2   $$ and 
 $$2 \left(1+\frac{1}{m}\right)  \lambda_1 ab \leq \left(1+\frac{1}{m}\right)^2\frac{\lambda_1^2}{\beta}a^2+\beta b^2.$$
 By Lagrange  multiplier theory, we can find 
 \begin{align}
\sum\limits_{i=1}^{m}\mu_i ^4 \leq \frac{m^2-3m+3}{m(m-1)}|A|^4\notag
 \end{align}
 under the constraints:
 $$ \sum\limits_{i=1}^{m}\mu_i=0,\sum\limits_{i=1}^{m}\mu_i ^2 =|A|^2.$$
Substituting this result into \eqref{integral estimate i,j}, we get this lemma.
\end{proof}

Plugging the results of Lemma \ref{D^2u,v(n,)} and  Lemma \ref{D^2u,v(,)}, and the equality \eqref{boundary information 3} into Lemma \ref{final foruma 1}   and sorting it out immediately yield
\begin{lem}\label{final formula 2}
For any absolute constant   $\epsilon>0$ and any  positive function $\beta$ defined on $\Sigma$,
\begin{align}
&\left[(2\lambda_1-m)\cdot\left(2\sqrt{\frac{m}{m-1}}\cdot|A|_{\max}+\frac{1}{\epsilon} \right)+2\epsilon (2\lambda_1^2-2 m\lambda_1 + m^2+m)\right] \notag\\
&\quad \times \left(\int_{\Omega_1}|\nabla u|^2+ \int_{\Omega_2}|\nabla v|^2 \right)\notag\\
\geq &  \frac{m+1}{m} \lambda_1 ^2 \bigintsss_{\Sigma}\Bigg( 2-\frac{m+1}{m}\frac{\epsilon}{\beta}\cdot |A|^2\Bigg)  f^2\notag\\
&+ \bigintsss_{\Sigma} \left(1-\frac{25\epsilon^2}{4}\cdot \frac{m-1}{m}|A|^2\right)\cdot \big|\nabla^{\Sigma} \langle \nabla u,\mathbf{n}\rangle
-\nabla^{\Sigma} \langle \nabla v,\mathbf{n} \rangle\big|^2\notag\\
&+ \bigintsss_{\Sigma}\left(\frac{1}{2}-\epsilon \beta-\frac{m^2-3m+3}{2m(m-1)}\epsilon^2 |A|^2\right)\cdot|A|^2 \big( \langle \nabla v ,\mathbf{n}  \rangle-\langle \nabla u,\mathbf{n} \rangle  \big)^2.\notag
\end{align}
\end{lem}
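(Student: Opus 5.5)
This is a bookkeeping step. I would take the inequality of Lemma \ref{final foruma 1} as the starting point: its left side is $\int_\Sigma(|D^2u|^2+|D^2v|^2)$, and its right side is the bracketed constant times $\int_{\Omega_1}|\nabla u|^2+\int_{\Omega_2}|\nabla v|^2$ plus three boundary terms.

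First, I would rewrite the left side pointwise using \eqref{boundary information 3}:
\[
2\lambda_1^2 f^2+\bigl|(D_{\mathbf n}\nabla u)^\top+(D_{\mathbf n}\nabla v)^\top\bigr|^2+\bigl|\nabla^\Sigma\langle\nabla u,\mathbf n\rangle-\nabla^\Sigma\langle\nabla v,\mathbf n\rangle\bigr|^2+\tfrac12\bigl|F^\star(D^2u)+F^\star(D^2v)\bigr|^2+\tfrac12|A|^2\bigl(\langle\nabla v,\mathbf n\rangle-\langle\nabla u,\mathbf n\rangle\bigr)^2 .
\]

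Second, I would bound the three boundary terms on the right side pointwise. For the $5\epsilon$-term I use Lemma \ref{D^2u,v(n,)}. For the combined Trace-term and $2\epsilon\lambda_1 f\langle A,\cdot\rangle$-term I use Lemma \ref{D^2u,v(,)} with the given $\beta$, and then integrate over $\Sigma$.

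Third, I move everything to one side and cancel the matching quantities. The term $|(D_{\mathbf n}\nabla u)^\top+(D_{\mathbf n}\nabla v)^\top|^2$ appears with coefficient $1$ on both sides and cancels exactly. So does $\tfrac12|F^\star(D^2u)+F^\star(D^2v)|^2$. What remains is the following.
\begin{itemize}
\item The $f^2$ terms: $2\lambda_1^2+\frac{2\lambda_1^2}{m}=\frac{2(m+1)}{m}\lambda_1^2$ from the left, minus $(\frac{m+1}{m})^2\frac{\epsilon\lambda_1^2}{\beta}|A|^2$ from the right. This equals $\frac{m+1}{m}\lambda_1^2\bigl(2-\frac{m+1}{m}\frac{\epsilon}{\beta}|A|^2\bigr)f^2$, as claimed.
\item The gradient-difference term, with coefficient $1-\frac{25\epsilon^2}{4}\frac{m-1}{m}|A|^2$.
\item The normal-derivative jump term $|A|^2(\langle\nabla v,\mathbf n\rangle-\langle\nabla u,\mathbf n\rangle)^2$, with coefficient $\frac12-\epsilon\beta-\frac{m^2-3m+3}{2m(m-1)}\epsilon^2|A|^2$.
\end{itemize}
These three pieces give exactly the right-hand side of Lemma \ref{final formula 2}, and the bracketed constant times the Dirichlet energies stays as the left-hand side.

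There is no real obstacle. The only care needed is sign conventions and confirming that the $\frac{2\lambda_1^2}{m}f^2$ produced in Lemma \ref{D^2u,v(,)} enters with the right sign to combine into the factor $\frac{m+1}{m}$. The inequality holds for arbitrary $\epsilon,\beta>0$, since nothing requires the coefficients to be positive at this stage.
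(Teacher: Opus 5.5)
Your proposal is correct and is exactly the paper's argument: substitute \eqref{boundary information 3} for the left side of Lemma~\ref{final foruma 1}, bound the boundary terms pointwise via Lemmas~\ref{D^2u,v(n,)} and~\ref{D^2u,v(,)}, integrate over $\Sigma$, and collect terms. Your bookkeeping also checks out, namely the exact cancellation of the two squared-sum terms and the combination $2\lambda_1^2+\frac{2\lambda_1^2}{m}=\frac{2(m+1)}{m}\lambda_1^2$ for the $f^2$ coefficient.
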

 Impose a restriction on $\epsilon$ and choose appropriate $\beta$ (depending on $\epsilon$)  to obtain a further bound on Lemma \ref{final formula 2}. This gives 
 \begin{lem}\label{final formula 5}
 For any $\epsilon>0$, if 
   \begin{align}
\epsilon <\frac{2}{5} \sqrt{\frac{m}{m-1}}\cdot\frac{1}{|A|_{\max}}\ ,\label{restriction epsion}   
\end{align}
 then  
 \begin{align}
    &\left[(2\lambda_1-m)\cdot\left(  2\sqrt{\frac{m}{m-1}}\cdot|A|_{\max}+\frac{1}{\epsilon} \right)+2\epsilon (2\lambda_1^2-2 m\lambda_1 + m^2+m)\right] \notag\\
& \times \left(\int_{\Omega_1}|\nabla u|^2+ \int_{\Omega_2}|\nabla v|^2 \right)\notag\\
\geq &2\sqrt{\frac{m+1}{m}}\cdot\lambda_1  \notag\\
&\times\bigintsss_{\Sigma}\Bigg\{   \Bigg[ \sqrt{2\lambda_1\cdot\left(1-\frac{25\epsilon^2}{4}\cdot \frac{m-1}{m}|A|_{\max}^2\right)+|A|^2 \left(1-\frac{m^2-3m+3}{m(m-1)}\epsilon^2 |A|^2\right)}\notag\\
 &\qquad\qquad-\sqrt{\frac{m+1}{m}} \cdot\epsilon |A|^2\Bigg]\cdot\Big|f \big(\langle \nabla u ,\mathbf{n}  \rangle-\langle \nabla v,\mathbf{n} \rangle\big)  \Big|\Bigg\}.\notag
\end{align} 
\end{lem}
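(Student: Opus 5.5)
The plan is to derive the statement from Lemma \ref{final formula 2} in three moves: a Poincaré inequality that turns the gradient term into a $w^2$ term, a pointwise choice of the free function $\beta$, and a pointwise AM--GM. Throughout, write
$$w=\langle \nabla u,\mathbf{n}\rangle-\langle \nabla v,\mathbf{n}\rangle .$$
Lemma \ref{final formula 2} bounds the left-hand side below by a sum of three boundary integrals: an $f^2$ term, a $|\nabla^{\Sigma} w|^2$ term, and a $|A|^2w^2$ term. The target inequality is purely in terms of $|f w|$, so the gradient term must first be converted into a $w^2$ term.

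\emph{Step 1 (Poincaré).} Since $\Delta u=0$ in $\Omega_1$ and $\Delta v=0$ in $\Omega_2$, the divergence theorem gives $\int_\Sigma\langle\nabla u,\mathbf{n}\rangle=-\int_{\Omega_1}\Delta u=0$, and similarly for $v$. Hence $\int_\Sigma w=0$, and the variational characterization of $\lambda_1$ on $\Sigma$ yields
$$\int_\Sigma|\nabla^\Sigma w|^2\ge\lambda_1\int_\Sigma w^2 .$$
The restriction \eqref{restriction epsion} says exactly that $c_1:=1-\frac{25\epsilon^2}{4}\frac{m-1}{m}|A|_{\max}^2>0$. The coefficient of the gradient term is pointwise $\ge c_1$ after replacing $|A|$ by $|A|_{\max}$, so that term is bounded below by $\lambda_1 c_1\int_\Sigma w^2$. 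This is the only genuinely global step and the one I expect to need care: it uses both the flux identity and the positivity of $c_1$.

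\emph{Step 2 (choice of $\beta$).} The lower bound now has the integrand
$$\tfrac{2(m+1)}{m}\lambda_1^2f^2+\tfrac12 K w^2-\Big(\tfrac{m+1}{m}\Big)^2\tfrac{\epsilon\lambda_1^2|A|^2}{\beta}f^2-\epsilon\beta|A|^2w^2,$$
where
$$K=2\lambda_1c_1+|A|^2\Big(1-\tfrac{m^2-3m+3}{m(m-1)}\epsilon^2|A|^2\Big).$$
The two $\beta$-dependent terms are optimized pointwise by $\beta=\sqrt{\frac{m+1}{m}}\,\lambda_1|f|/|w|$, which reduces them to $-2\sqrt{\frac{m+1}{m}}\,\epsilon\lambda_1|A|^2|fw|$. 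Where $fw=0$ this $\beta$ is not a positive function. I would handle this by taking $\beta_\delta=\sqrt{\frac{m+1}{m}}\lambda_1\frac{|f|+\delta}{|w|+\delta}$, which is positive and continuous, and letting $\delta\to0$ by dominated convergence. Since Lemma \ref{final formula 2} holds for every positive $\beta$, this is legitimate.

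\emph{Step 3 (AM--GM).} First check $K\ge0$. Since $\frac{m^2-3m+3}{m(m-1)}\le1$ and \eqref{restriction epsion} gives $\epsilon^2|A|^2<\frac{4m}{25(m-1)}<1$, the second summand of $K$ is positive. Then
$$\tfrac{2(m+1)}{m}\lambda_1^2f^2+\tfrac12Kw^2\ge2\sqrt{\tfrac{m+1}{m}}\,\lambda_1\sqrt K\,|fw| .$$
Combining this with Step 2 and factoring out $2\sqrt{\frac{m+1}{m}}\,\lambda_1$ gives exactly the integrand $\big[\sqrt K-\sqrt{\frac{m+1}{m}}\,\epsilon|A|^2\big]|fw|$ claimed in Lemma \ref{final formula 5}. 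The remaining work is routine bookkeeping of constants.
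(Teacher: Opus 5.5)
Your three steps are the right ones, but the constant in Step 2 is wrong. Set $a=\bigl(\frac{m+1}{m}\bigr)^2\epsilon\lambda_1^2|A|^2f^2$ and $b=\epsilon|A|^2w^2$. The pointwise minimizer of $a/\beta+b\beta$ is then $\beta=\frac{m+1}{m}\lambda_1|f|/|w|$, and the minimum is $2\,\frac{m+1}{m}\,\epsilon\lambda_1|A|^2|fw|$. You wrote $\sqrt{\frac{m+1}{m}}$ in both places, and with those values the bookkeeping does not close. Factoring $2\sqrt{\frac{m+1}{m}}\lambda_1$ out of your stated minimum leaves $\epsilon|A|^2$. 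Your literal $\beta$ actually produces the weaker term $\frac{2m+1}{2m}\epsilon|A|^2$.

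With the corrected $\beta_\delta=\frac{m+1}{m}\lambda_1\frac{|f|+\delta}{|w|+\delta}$, the convergence as $\delta\to0$ is even uniform. The factorization then gives exactly $\sqrt K-\sqrt{\frac{m+1}{m}}\,\epsilon|A|^2$. Steps 1 and 3, including the checks $c_1>0$ and $K>0$, are fine. Choosing $\beta$ pointwise optimally is the same as never applying Young's inequality to the cross term $2\frac{m+1}{m}\epsilon\lambda_1|A|^2f\bigl(\langle\nabla v,\mathbf{n}\rangle-\langle\nabla u,\mathbf{n}\rangle\bigr)$ in Lemma \ref{D^2u,v(,)}. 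Skipping that step would spare you the limit altogether.

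With that fix, your route is correct but orders things differently from the paper, which uses the same two AM--GM inequalities in the opposite order. It first applies $a^2+b^2\ge2|a||b|$ to the $f^2$ and $w^2$ terms with $\beta$ still free. It then takes $\beta=\sqrt{\frac{m+1}{4m}K}$, a function of $|A|^2$ alone, which maximizes the product of the two coefficients. That maximum is $\bigl(\sqrt K-\sqrt{\frac{m+1}{m}}\epsilon|A|^2\bigr)^2$, so both routes give the same constant.

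Because the paper's $\beta$ does not depend on $u,v$, there is no degenerate set $\{fw=0\}$ and no limit. In exchange, the paper must check that this $\beta$ satisfies \eqref{restriction beta 1} and \eqref{restriction beta 2}, i.e.\ that both coefficients stay positive. This amounts to $\sqrt K>\sqrt{\frac{m+1}{m}}\epsilon|A|^2$, a second use of the restriction on $\epsilon$. It also yields the pointwise positivity of the bracket, which the paper relies on in Lemma \ref{final formula 6}.

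Your order needs only $c_1>0$ and $K\ge0$, and it proves the inequality whatever the sign of the bracket. It is therefore more economical for this lemma itself, but the positivity of the bracket must then be checked separately where the lemma is applied.
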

 \begin{proof}
$\Delta u=0, \Delta v=0$ and the divergence theorem give
$$\int_{\Sigma} \langle \nabla u ,\mathbf{n}\rangle =\int_{\Sigma} \langle \nabla v,\mathbf{n} \rangle=0.$$
Consequently, by  definition of $\lambda_1$, we have 
$$\int_{\Sigma} \big|\nabla^{\Sigma} \langle \nabla u,\mathbf{n}\rangle
-\nabla^{\Sigma} \langle \nabla v,\mathbf{n} \rangle\big|^2\geq \lambda_1\int_{\Sigma}\big( \langle \nabla u ,\mathbf{n}  \rangle-\langle \nabla v,\mathbf{n} \rangle  \big)^2$$ 

Substituting this result into Lemma \ref{final formula 2}, we get
\begin{align}
&\left[(2\lambda_1-m)\cdot\left(2\sqrt{\frac{m}{m-1}}\cdot|A|_{\max}+\frac{1}{\epsilon} \right)+2\epsilon (2\lambda_1^2-2 m\lambda_1 + m^2+m)\right] \notag\\
& \times \left(\int_{\Omega_1}|\nabla u|^2+ \int_{\Omega_2}|\nabla v|^2 \right)\notag\\
\geq &  \frac{m+1}{m}\lambda_1 ^2 \bigintsss_{\Sigma}\Bigg( 2-\frac{m+1}{m}\frac{\epsilon}{\beta}\cdot |A|^2\Bigg)  f^2\notag\\
&+ \bigintss_{\Sigma} \Bigg\{\Bigg[\lambda_1\cdot\left(1-\frac{25\epsilon^2}{4}\cdot \frac{m-1}{m}|A|_{\max}^2\right)+\left(\frac{1}{2}-\epsilon \beta-\frac{m^2-3m+3}{2m(m-1)}\epsilon^2 |A|^2\right)\cdot|A|^2\Bigg] \notag\\
&\qquad \times \big( \langle \nabla u ,\mathbf{n}  \rangle-\langle \nabla v,\mathbf{n} \rangle  \big)^2 \  \Bigg\},\label{final formula 3}
\end{align}
where we use  $|A|^2\leq |A|_{\max} ^2 $.

For later estimates, we impose some restrictions on $\beta$: fix $\epsilon$, and for each point on $\Sigma$, 
$$ 2-\frac{m+1}{m}\frac{\epsilon}{\beta}\cdot |A|^2>0  $$ and 
$$\lambda_1\cdot\left(1-\frac{25\epsilon^2}{4}\cdot \frac{m-1}{m}|A|_{\max}^2\right)+\left(\frac{1}{2}-\epsilon \beta-\frac{(m^2-3m+3)}{2m(m-1)}\epsilon^2 |A|^2\right)\cdot|A|^2 >0.$$
i.e., for any point on $\Sigma$, 
\begin{align}
\beta >\frac{m+1}{2m}\epsilon |A|^2,\label{restriction beta 1}
\end{align}
and 
\begin{align}
\beta&<-\frac{m^2-3m+3}{2m(m-1)}\epsilon  |A|^2+\frac{1}{2\epsilon}+\frac{\lambda_1}{|A|^2\epsilon} \cdot\left(1-\frac{25\epsilon^2}{4}\cdot \frac{m-1}{m}|A|_{\max}^2\right) \notag\\
&=\left(\frac{1}{2}+\frac{\lambda_1}{|A|^2}\right)\frac{1}{\epsilon}
-\left(\frac{m^2-3m+3}{2m(m-1)}  |A|^2 +\frac{25}{4}\cdot\frac{m-1}{m} \lambda_1 \frac{|A|_{\max}^2}{|A|^2}  \right)\epsilon, \ |A|^2\neq 0.  \label{restriction beta 2}
\end{align}
Assume that there exists $\beta$ such that it satisfies \eqref{restriction beta 1} and \eqref{restriction beta 2} under \eqref{restriction epsion}. Then,
using $a^2+b^2 \geq 2|a|\cdot|b|$ for the integrand in \eqref{final formula 3}  yields 
\begin{align}
&\left[(2\lambda_1-m)\cdot\left( 2\sqrt{\frac{m}{m-1}}\cdot|A|_{\max}  +\frac{1}{\epsilon} \right)+2\epsilon (2\lambda_1^2-2 m\lambda_1 + m^2+m)\right] \notag\\
& \times \left(\int_{\Omega_1}|\nabla u|^2+ \int_{\Omega_2}|\nabla v|^2 \right)\notag\\
\geq &2\sqrt{\frac{m+1}{m}}\cdot\lambda_1  \bigintsss_{\Sigma}\Bigg\{ \Big|f \big(\langle \nabla u ,\mathbf{n}  \rangle-\langle \nabla v,\mathbf{n} \rangle\big)  \Big|\cdot\sqrt{ 2-\frac{m+1}{m}\frac{\epsilon}{\beta}\cdot |A|^2}\notag\\
&\quad \times\sqrt{\lambda_1\cdot\left(1-\frac{25\epsilon^2}{4}\cdot \frac{m-1}{m}|A|_{\max}^2\right)+\left(\frac{1}{2}-\epsilon \beta-\frac{m^2-3m+3}{2m(m-1)}\epsilon^2 |A|^2\right)\cdot|A|^2}\ \ \Bigg\}.\label{final formula 4}
\end{align}
Using $a^2+b^2\geq 2|a|\cdot|b|$ again, we obtain
 \begin{align}
 &\left(2-\frac{m+1}{m}\frac{\epsilon}{\beta}\cdot|A|^2\right)\notag\\
 &\times  \Bigg[\lambda_1\cdot\left(1-\frac{25\epsilon^2}{4}\cdot \frac{m-1}{m}|A|_{\max}^2\right)+\left(\frac{1}{2}-\epsilon \beta-\frac{m^2-3m+3}{2m(m-1)}\epsilon^2 |A|^2\right)\cdot|A|^2\Bigg]\notag\\
 =&2\lambda_1\cdot\left(1-\frac{25\epsilon^2}{4}\cdot \frac{m-1}{m}|A|_{\max}^2\right)+|A|^2 \left(1-\frac{m^2-3m+3}{m(m-1)}\epsilon^2 |A|^2\right)+\frac{m+1}{m}\epsilon^2 |A|^4\notag\\
 &-\frac{m+1}{m}\frac{\epsilon}{\beta}\cdot|A|^2 \Bigg[\lambda_1\left(1-\frac{25\epsilon^2}{4}\cdot \frac{m-1}{m}|A|_{\max}^2\right)+|A|^2 \left(\frac{1}{2}-\frac{m^2-3m+3}{2m(m-1)}\epsilon^2 |A|^2\right)\Bigg] \notag\\
 &-2\epsilon |A|^2 \beta\notag\\
 \leq & 2\lambda_1\cdot\left(1-\frac{25\epsilon^2}{4}\cdot \frac{m-1}{m}|A|_{\max}^2\right)+|A|^2 \left(1-\frac{m^2-3m+3}{m(m-1)}\epsilon^2 |A|^2\right)+\frac{m+1}{m}\epsilon^2 |A|^4\notag\\
 &-2\sqrt{2} \cdot\sqrt{\frac{m+1}{m}}\epsilon \cdot |A|^2\notag\\
 &\times \sqrt{\lambda_1\left(1-\frac{25\epsilon^2}{4}\cdot \frac{m-1}{m}|A|_{\max}^2\right)+|A|^2 \left(\frac{1}{2}-\frac{m^2-3m+3}{2m(m-1)}\epsilon^2 |A|^2\right)}\notag\\
 =& \Bigg[ \sqrt{2\lambda_1\cdot\left(1-\frac{25\epsilon^2}{4}\cdot \frac{m-1}{m}|A|_{\max}^2\right)+|A|^2 \left(1-\frac{m^2-3m+3}{m(m-1)}\epsilon^2 |A|^2\right)}\notag\\
 &\quad -\sqrt{\frac{m+1}{m}} \cdot\epsilon |A|^2\Bigg]^2,\label{final formula 4'}
 \end{align}
where the equality holds if and only if 
$$\beta^2=\frac{m+1}{2m}\Bigg[\lambda_1\left(1-\frac{25\epsilon^2}{4}\cdot \frac{m-1}{m}|A|_{\max}^2\right)+|A|^2 \left(\frac{1}{2}-\frac{m^2-3m+3}{2m(m-1)}\epsilon^2 |A|^2\right)\Bigg]$$
 holds when  $|A|^2\neq 0$.

 Now, we need to verify that under the range of $\epsilon$ \eqref{restriction epsion}, this function satisfies \eqref{restriction beta 1} and \eqref{restriction beta 2}.
 Combining  this function  with \eqref{restriction beta 1} and \eqref{restriction beta 2}, we deduce that the existence $\beta$ is equivalent to the following inequality holding for any point on $\Sigma$:
 $$\lambda_1\left(1-\frac{25\epsilon^2}{4}\cdot \frac{m-1}{m}|A|_{\max}^2\right)+|A|^2 \left(\frac{1}{2}-\frac{m^2-3m+3}{2m(m-1)}\epsilon^2 |A|^2\right)> \frac{m+1}{2m}\epsilon^2 |A|^4,$$
that is,
$$\lambda_1\left(1-\frac{25\epsilon^2}{4}\cdot \frac{m-1}{m}|A|_{\max}^2\right) > \frac{|A|^2}{2}\left(\frac{2m^2-3m+2}{m(m-1)}|A|^2\epsilon^2-1 \right).$$
This is also  equivalent to the following inequality holding:
$$\lambda_1\left(1-\frac{25\epsilon^2}{4}\cdot \frac{m-1}{m}|A|_{\max}^2\right) > \frac{|A|_{\max}^2}{2}\left(\frac{2m^2-3m+2}{m(m-1)}|A|_{\max}^2\epsilon^2-1 \right).$$
 i.e.,
 $$\epsilon^2<\frac{|A|_{\max}^2+2\lambda_1}{ \frac{2m^2-3m+2}{m(m-1)}|A|_{\max}^4 +\frac{25}{2}\frac{m-1}{m} \lambda_1 |A|_{\max}^2}.$$
Comparing it with \eqref{restriction epsion}, we find  that the range above  of $\epsilon$ is larger than  \eqref{restriction epsion}, which implies that we can choose $\beta$ to be this function. Also, from the above inequality of  $\epsilon$, the term inside the square bracket in the final result of \eqref{final formula 4'} is positive.

Fix $\epsilon$ and let  
$$ \beta^2=\frac{m+1}{2m}\Bigg[\lambda_1\left(1-\frac{25\epsilon^2}{4}\cdot \frac{m-1}{m}|A|_{\max}^2\right)+|A|^2 \left(\frac{1}{2}-\frac{m^2-3m+3}{2m(m-1)}\epsilon^2 |A|^2\right)\Bigg].$$
Combining \eqref{final formula 4} and \eqref{final formula 4'}, we obtain this lemma.
 \end{proof}
Finally, we need a lemma to get a lower bound for the term 
$$\sqrt{2\lambda_1\cdot\left(1-\frac{25\epsilon^2}{4}\cdot \frac{m-1}{m}|A|_{\max}^2\right)+|A|^2 \left(1-\frac{m^2-3m+3}{m(m-1)}\epsilon^2 |A|^2\right)}-\sqrt{\frac{m+1}{m}} \cdot\epsilon |A|^2$$ 
that depends only on $\epsilon,|A|_{\max}$ and $|A|_{\min}$. This requires the further restriction on $\epsilon$. Then, based on this result, we can obtain a concise estimate independent of the functions $u$ and $v$. The lemma is as follows.
\begin{lem}\label{final formula 6}
For any $\epsilon>0$, if 
\begin{align}
0<\epsilon \leq \frac{1}{2}\sqrt{\frac{m-1}{4m-3}}\cdot \frac{1}{|A|_{\max}}\ ,  \label{new restriction  epsion}
\end{align}
then
\begin{align}
&(2\lambda_1-m)\left(2\sqrt{\frac{m}{m-1}}\cdot|A|_{\max}+\frac{1}{\epsilon} \right)\notag\\
&+2\epsilon \Bigg[2\lambda_1^2+\left(\frac{m+1}{m}|A|^2_{\min}-2 m\right)\lambda_1 +m^2+m\Bigg] \notag\\
\geq & 2\sqrt{\frac{m+1}{m}}\cdot\lambda_1\sqrt{2\lambda_1+|A|_{\min}^2-\left(\frac{25(m-1)\lambda_1}{2m}|A|^2_{\max}+\frac{m^2-3m+3}{m(m-1)}|A|_{\min}^4 \right)\epsilon^2}\ .\notag
\end{align}    
\end{lem}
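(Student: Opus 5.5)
The plan is to start from Lemma \ref{final formula 5}, bound the bracketed integrand below pointwise by a constant that involves only $|A|_{\min}$, $|A|_{\max}$, $\lambda_1$, $\epsilon$ and $m$, and then compare $\int_\Sigma |f(\langle\nabla u,\mathbf n\rangle-\langle\nabla v,\mathbf n\rangle)|$ with the Dirichlet energies. First, \eqref{new restriction  epsion} implies \eqref{restriction epsion}, since $\frac12\sqrt{\frac{m-1}{4m-3}}<\frac25\sqrt{\frac{m}{m-1}}$. So Lemma \ref{final formula 5} applies.

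Put $t=|A|^2\in[|A|_{\min}^2,|A|_{\max}^2]$ and
$$C=2\lambda_1\Big(1-\tfrac{25\epsilon^2}{4}\tfrac{m-1}{m}|A|_{\max}^2\Big),\qquad k=\tfrac{m^2-3m+3}{m(m-1)} .$$
Then the bracket in Lemma \ref{final formula 5} is
$$g(t)=\sqrt{C+t-k\epsilon^2t^2}-\sqrt{\tfrac{m+1}{m}}\,\epsilon t .$$
The key step is to show that $g$ is nondecreasing on $[0,|A|_{\max}^2]$ under \eqref{new restriction  epsion}. This gives $g(|A|^2)\ge g(|A|_{\min}^2)$ pointwise on $\Sigma$. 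Differentiating, the claim is equivalent to
$$(1-2k\epsilon^2 t)^2\ \ge\ 4\,\tfrac{m+1}{m}\,\epsilon^2\,(C+t-k\epsilon^2t^2)\qquad\text{for } 0\le t\le |A|_{\max}^2 .$$
I expect this inequality to be the main obstacle. To prove it I would use $\epsilon^2 t\le \epsilon^2|A|_{\max}^2\le \frac{m-1}{4(4m-3)}$ to bound $2k\epsilon^2t$ away from $1$. I would bound $\epsilon^2 C\le 2\lambda_1\epsilon^2\le 2m\epsilon^2< \frac{m-1}{2(4m-3)}$, using $\lambda_1\le m$ and $|A|_{\max}^2>m$. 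After these substitutions the inequality becomes an explicit polynomial inequality in $s=\epsilon^2|A|_{\max}^2\in(0,\frac{m-1}{4(4m-3)}]$, with $m$-dependent coefficients. I would check it at the endpoint, and the constant $\frac{m-1}{4m-3}$ is presumably tuned exactly so that it holds. The same estimates give $C+t-k\epsilon^2t^2>0$, so $g(|A|_{\min}^2)$ is real.

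Next, combine the boundary identities \eqref{basic integral u} and \eqref{basic integral v}:
$$\int_{\Omega_1}|\nabla u|^2+\int_{\Omega_2}|\nabla v|^2=\int_\Sigma f\big(\langle\nabla v,\mathbf n\rangle-\langle\nabla u,\mathbf n\rangle\big)\le\int_\Sigma\big|f(\langle\nabla u,\mathbf n\rangle-\langle\nabla v,\mathbf n\rangle)\big| .$$
If $g(|A|_{\min}^2)\ge 0$, then Lemma \ref{final formula 5} gives
$$[\cdots]\Big(\int_{\Omega_1}|\nabla u|^2+\int_{\Omega_2}|\nabla v|^2\Big)\ \ge\ 2\sqrt{\tfrac{m+1}{m}}\,\lambda_1\, g(|A|_{\min}^2)\Big(\int_{\Omega_1}|\nabla u|^2+\int_{\Omega_2}|\nabla v|^2\Big).$$
The energy is positive since $f$ is nonconstant, so I divide by it. If instead $g(|A|_{\min}^2)<0$, the final inequality still holds, because its left side dominates $\frac{m+1}{m}\cdot 2\epsilon\lambda_1|A|_{\min}^2$: indeed $2\lambda_1\ge m$, the quadratic in $\lambda_1$ is positive, and that quantity exceeds $-2\sqrt{\frac{m+1}{m}}\lambda_1 g$ in this case.

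Finally, write $2\sqrt{\frac{m+1}{m}}\lambda_1 g(|A|_{\min}^2)$ as
$$2\sqrt{\tfrac{m+1}{m}}\lambda_1\sqrt{C+|A|_{\min}^2-k\epsilon^2|A|_{\min}^4}\;-\;2\epsilon\,\tfrac{m+1}{m}\lambda_1|A|_{\min}^2 .$$
Move the second term to the left-hand side, where it becomes the summand $\frac{m+1}{m}|A|_{\min}^2\lambda_1$ inside $2\epsilon[\cdots]$. Expanding $C$ gives $2\lambda_1-\frac{25(m-1)\lambda_1}{2m}|A|_{\max}^2\epsilon^2$, which is exactly the expression under the square root in the statement.
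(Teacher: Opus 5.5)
Your proposal is correct and follows essentially the paper's argument. Like the paper, you show that the bracket from Lemma~\ref{final formula 5} is increasing in $y=|A|^2$, evaluate it at $|A|_{\min}^2$, use the energy identity, and rearrange; the only real difference is that the paper uses $\eta''<0$ to reduce the sign check of $\eta'$ to the endpoint $y=|A|_{\max}^2$, while you check the squared inequality on the whole interval. Carrying out your substitutions gives $1-\frac{4(4m-3)}{m-1}\epsilon^2|A|_{\max}^2+(\text{positive})\,\epsilon^4|A|_{\max}^4\ge 0$, which has the same linear coefficient as the paper's condition and holds because the linear term cancels the constant exactly at the endpoint of the restriction on $\epsilon$. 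Your separate case $g(|A|_{\min}^2)<0$ is harmless but vacuous, since the proof of Lemma~\ref{final formula 5} already gives positivity of the bracket at every point of $\Sigma$, including where $|A|_{\min}$ is attained.
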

\begin{proof}
  Let $$\eta(y)=\sqrt{2\lambda_1\cdot\left(1-\frac{25\epsilon^2}{4}\cdot \frac{m-1}{m}|A|_{\max}^2\right)+y \left(1-\frac{m^2-3m+3}{m(m-1)}\epsilon^2 y\right)}-\sqrt{\frac{m+1}{m}} \cdot\epsilon y, $$ 
    where $y \in \big[|A|^2_{\min} , |A|^2_{\max}\big].$
    
    Our goal is to get its minimum of $\eta$ on the interval $\big[|A|^2_{\min} , |A|^2_{\max}\big]$.
    
    Taking the derivative of $\eta$, we obtain
    $$\eta'(y)=\frac{1-\frac{2(m^2-3m+3)\epsilon^2 y}{m(m-1)}}{2\sqrt{2\lambda_1\cdot\left(1-\frac{25\epsilon^2}{4}\cdot \frac{m-1}{m}|A|_{\max}^2\right)+y \left(1-\frac{m^2-3m+3}{m(m-1)}\epsilon^2 y\right)}} -\sqrt{\frac{m+1}{m}} \cdot\epsilon.$$
    Then, $\eta''(y)<0$, which implies that $\eta'$ is strictly decreasing on $\big[|A|^2_{\min} , |A|^2_{\max}\big]$. Therefore, 
\begin{align}
&\eta'(y)\geq \eta'(|A|_{\max}^2 )\notag\\
=&\frac{1-\frac{2(m^2-3m+3)\epsilon^2 |A|^2_{\max}}{m(m-1)}}{2\sqrt{2\lambda_1\cdot\left(1-\frac{25\epsilon^2}{4}\cdot \frac{m-1}{m}|A|_{\max}^2\right)+|A|^2_{\max} \left(1-\frac{m^2-3m+3}{m(m-1)}\epsilon^2 |A|^2_{\max}\right)}} -\sqrt{\frac{m+1}{m}} \cdot\epsilon\notag \\
>& \frac{1-\frac{2(m^2-3m+3)\epsilon^2 |A|^2_{\max} }{m(m-1)} }{2|A|_{\max}\sqrt{3- \frac{27m^2-56m+31}{2m(m-1)}|A|^2_{\max} \epsilon^2 }  }  -\sqrt{\frac{m+1}{m}}\cdot \epsilon\notag,
\end{align}
where we use $$\lambda_1 \leq m <|A|^2_{\max}.$$
We can see that the term on the right-hand side of the inequality above is greater than 0 is equivalent to 
\begin{align}
    &\left(1-\frac{2(m^2-3m+3)\epsilon^2 |A|^2_{\max}}{m(m-1)}\right)^2\notag\\
    \geq & \frac{4(m+1)}{m}|A|^2_{\max}\epsilon^2 \left(3- \frac{27m^2-56m+31}{2m(m-1)}|A|^2_{\max} \epsilon^2 \right),\notag
    \end{align}
that is,
\begin{align}
  \frac{2(28m^4-62m^3+19m^2+48m-22)}{m^2(m-1)^2}|A|_{\max}^4 \epsilon^4-\frac{4(4m-3)}{m-1}|A|^2_{\max}\epsilon^2+1\geq 0.\notag
\end{align}
 Under the range of $\epsilon$ \eqref{new restriction  epsion}: $$0<\epsilon \leq \frac{1}{2}\sqrt{\frac{m-1}{4m-3}}\cdot \frac{1}{|A|_{\max}},$$ the inequality above obviously holds, in which $\eta'(y)>0$ and 
\begin{align}
   & \eta(y) \geq \eta (|A|^2_{\min})\notag\\
    =&\sqrt{2\lambda_1\left(1-\frac{25\epsilon^2}{4} \cdot\frac{m-1}{m}|A|_{\max}^2\right)+|A|_{\min}^2 \left(1-\frac{m^2-3m+3}{m(m-1)}\epsilon^2 |A|_{\min}^2\right)}\notag\\
    &-\sqrt{\frac{m+1}{m}} \cdot\epsilon |A|_{\min}^2\notag\\
    =&\sqrt{2\lambda_1+|A|_{\min}^2-\left(\frac{25(m-1)\lambda_1}{2m}|A|^2_{\max}+\frac{m^2-3m+3}{m(m-1)}|A|_{\min}^4 \right)\epsilon^2}\notag\\
     &-\sqrt{\frac{m+1}{m}} \cdot\epsilon |A|_{\min}^2\notag.
    \end{align}

Note that by \eqref{basic integral u} and \eqref{basic integral v},
\begin{align}
\int_{\Sigma} \Big|f \big(\langle \nabla u,\mathbf{n}\rangle -\langle \nabla v,\mathbf{n} \rangle \big)  \Big|\geq \int_{\Omega_1}|\nabla u|^2 +\int_{\Omega_2}|\nabla v|^2.\notag
\end{align}
Substituting this result and the minimum of $\eta$ into Lemma \ref{final formula 5} and then dividing both sides of the inequality by $\int_{\Omega_1}|\nabla u|^2 +\int_{\Omega_2}|\nabla v|^2$ , we get this lemma.
\end{proof}

 Based on Lemma \ref{final formula 6}, we can see that when we choose $\epsilon>0$ to be a small constant, $\lambda_1$ has a lower bound greater than $\frac{m}{2}$. Now, we prove Theorem \ref{main}.
\begin{proof}[Proof of Theorem \ref{main}]
We take $$\epsilon=\frac{1}{2}\sqrt{\frac{m-1}{4m-3}}\cdot \frac{1}{|A|_{\max}}$$
in Lemma \ref{final formula 6}. This gives
\begin{align}
& 6\sqrt{\frac{m}{m-1}}\cdot(2\lambda_1-m) |A|_{\max} + \frac{1}{4\sqrt{m}}(2\lambda_1 -m)^2 +\frac{(m+2)\sqrt{m}}{4}
+\frac{m+1}{2m}\lambda_1\cdot|A|_{\min}\notag\\
>&(2\lambda_1-m)\left( 2\sqrt{\frac{m}{m-1}}\cdot|A|_{\max}+\frac{1}{\epsilon} \right)\notag\\
&+2\epsilon \Bigg[2\lambda_1^2+\left(\frac{m+1}{m}|A|^2_{\min}-2 m\right)\lambda_1 + m^2+m\Bigg] \notag\\
\geq& 2\sqrt{\frac{m+1}{m}}\cdot\lambda_1\sqrt{2\lambda_1+|A|_{\min}^2-\left(\frac{25(m-1)\lambda_1}{2m}|A|^2_{\max}+\frac{m^2-3m+3}{m(m-1)}|A|_{\min}^4 \right)\epsilon^2}\notag\\
>&\frac{\sqrt{3}}{2}\sqrt{\frac{m+1}{m}}\cdot\lambda_1\sqrt{\frac{13}{2}\lambda_1+ 5|A|^2_{\min} }
\notag\\
>&\frac{\sqrt{3}}{2}\sqrt{\frac{m+1}{m}}\cdot\lambda_1\sqrt{\frac{13m}{4}+5|A|_{\min}^2   }.\label{main estimate 1}
\end{align}   
where  the first and second inequalities  are the conclusion of Lemma \ref{final formula 6}; in the first  and third  inequalities, we use
$$\frac{1}{4}\sqrt{\frac{m-1}{m}}\cdot \frac{1}{|A|_{\max}}<\epsilon=\frac{1}{2}\sqrt{\frac{m-1}{4m-3}}\cdot \frac{1}{|A|_{\max}}<\frac{1}{4|A|_{\max}}$$ and 
 $$ |A|_{\max} \geq |A|_{\min},|A|_{\max}>\sqrt{m};$$
in the last step, we use the fact that   $$\lambda_1>\frac{m}{2}.$$

By  $$\frac{m}{2}<\lambda_1 \leq m ,$$ we have 
\begin{align}
&\frac{\sqrt{3}}{2}\sqrt{\frac{m+1}{m}}\cdot\lambda_1 \sqrt{\frac{13m}{4}+5|A|_{\min}^2   }-\frac{m+1}{2m}\lambda_1 |A|_{\min} -\frac{1}{4\sqrt{m}}(2\lambda_1 -m)^2\notag\\
=& \frac{\sqrt{3m(m+1)}}{4}\sqrt{\frac{13m}{4}+5|A|_{\min}^2   }-\frac{m+1}{4}|A|_{\min}\notag\\
&+\left(\frac{\sqrt{3}}{2}\sqrt{\frac{m+1}{m}}\sqrt{\frac{13m}{4}+5|A|_{\min}^2   }- \frac{m+1}{2m}|A|_{\min}  -\frac{\lambda_1-\frac{m}{2}}{\sqrt{m} }\right)(\lambda_1-\frac{m}{2})\notag
\end{align}
we continue from the previous page:
\begin{align}
\geq &\frac{\sqrt{3 m(m+1)}}{4}\sqrt{\frac{13m}{4}+5|A|_{\min}^2   }-\frac{m+1}{4}|A|_{\min}\notag\\
&+ \left(\frac{\sqrt{3}}{2}\sqrt{\frac{m+1}{m}}\sqrt{\frac{13m}{4}+5|A|_{\min}^2   }- \frac{m+1}{2m}|A|_{\min}  -\frac{\sqrt{m}}{2}\right)(\lambda_1-\frac{m}{2})\notag\\
>& \frac{\sqrt{3m(m+1)}}{4}\sqrt{\frac{13m}{4}+5|A|_{\min}^2   }-\frac{m+1}{4}|A|_{\min}\label{mian estimate 3},
\end{align}
where  the last step is because the  term $$\frac{\sqrt{3}}{2}\sqrt{\frac{m+1}{m}}\sqrt{\frac{13m}{4}+5|A|_{\min}^2   }- \frac{m+1}{2m}|A|_{\min}  -\frac{\sqrt{m}}{2}$$is greater than zero by direct computation.

Combining  \eqref{main estimate 1}  and \eqref{mian estimate 3} yields
\begin{align}
 \lambda_1 >\frac{m}{2} + \frac{\sqrt{m^2-1}}{48|A|_{\max}} \left( \sqrt{\frac{39m}{4}+15|A|^2_{\min}  }-\sqrt{\frac{m+1}{m}}|A|_{\min}-\frac{m+2}{\sqrt{m+1}}\right).\label{main estimate 2}
\end{align}
Using $2ab\leq a^2+b^2$ gives
\begin{align}
 (|A|_{\min}+\frac{13}{20}\sqrt{m})^2 =&|A|_{\min} ^2+ \frac{13}{10}\sqrt{m}|A|_{\min}  +\frac{149}{400} m \notag\\
 \leq & \left(1+\frac{13}{20}\right)|A|_{\min}^2+ \left(\frac{13}{20}+\frac{169}{400}\right)m\notag\\
 =& \frac{11}{100}\left(15|A|_{\min}^2+ \frac{39}{4}m \right),\notag
\end{align}
which implies that 
\begin{align}
\lambda_1 >\frac{m}{2} + \frac{\sqrt{m^2-1}}{48|A|_{\max}} \left[\left(\frac{10}{\sqrt{11}}-\sqrt{\frac{m+1}{m}}\right)|A|_{\min}+\frac{13}{2\sqrt{11}}\sqrt{m}-\frac{m+2}{\sqrt{m+1}}\right].\label{main estimate 4 }
\end{align}
This completes the proof of the main theorem.

\end{proof}
\section{Proof of Theorem \ref{curvature upper bound} }\label{curvature-steklov}
In this section, we prove that if the norm square of  the second fundamental form $|A|^2$ is constant, then $|A|^2 $ has an upper bound depending only on dimension $m$ and the area of $\Sigma$. The main idea   is to first choose the appropriate $\epsilon$ and $\beta$ in Lemma \ref{final formula 2} from the previous section to obtain Lemma \ref{upper bound function},  then use the lower bound of the first Steklov eigenvalue to get the following Theorem \ref{steklov'}, and finally combine these two results yields the proof of this conclusion. Here,  we  continue to  use the  notation from the previous section:  $\mathbf{n}$ is the inward-pointing unit normal vector field with respect to $\Omega_1$, $A$ is the corresponding second fundamental form, and the case of $\Omega_2$ differs from that $\Omega_1$ only by a sign; $f$ denotes the eigenfunction with  eigenvalue $\lambda_1$, and $u$ and $v$ denote the harmonic extensions of $f$  to the interior of $\Omega_1$ and $\Omega_2$, respectively. Throughout the proof, we always assume that $|A|^2 >m$.

First, choose $\epsilon$ and $\beta$ in  Lemma \ref{final formula 2} and it yields 
\begin{lem}\label{upper bound function}
If $|A|^2$ is constant, then
\begin{align}
   \frac{\int_{\Omega_1}|\nabla u |^2 +\int_{\Omega_2}|\nabla v|^2  }{\int_{\Sigma} f^2 }<\frac{E(m)}{|A|},\notag
\end{align}
where $$E(m)= \frac{ 9\sqrt{m(m-1)}+\frac{8(m+1)\sqrt{m}}{5\sqrt{m-1 }} }{1-\frac{4(2m^2-3m+2)}{25(m-1)^2}}.$$
\end{lem}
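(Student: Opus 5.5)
The plan is to specialize Lemma \ref{final formula 2} to constant $|A|$. I will choose $\epsilon$ and $\beta$ so that the $f^2$-term and the $|\nabla^\Sigma(\cdot)|^2$-term drop out, leaving only the term in $w:=\langle\nabla u,\mathbf n\rangle-\langle\nabla v,\mathbf n\rangle$. This gives an upper bound on $|A|^2\int_\Sigma w^2$ by a multiple of the Dirichlet energy. I then close the argument with Cauchy--Schwarz and the identity $\int_\Sigma f w=\int_{\Omega_1}|\nabla u|^2+\int_{\Omega_2}|\nabla v|^2$, which follows from \eqref{basic integral u} and \eqref{basic integral v}.

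\emph{Choice of parameters.} Since $|A|$ is constant, $\beta$ may be taken constant on $\Sigma$. Write $G:=\int_{\Omega_1}|\nabla u|^2+\int_{\Omega_2}|\nabla v|^2$. I take
\[
\epsilon=\frac{2}{5}\sqrt{\frac{m}{m-1}}\,\frac{1}{|A|},\qquad \beta=\frac{m+1}{2m}\,\epsilon|A|^2 .
\]
With this choice the coefficient of $f^2$ in Lemma \ref{final formula 2} vanishes, and so does the coefficient $1-\frac{25\epsilon^2}{4}\frac{m-1}{m}|A|^2$ of the gradient term. The remaining coefficient equals
\[
\frac12-\frac{2m^2-3m+2}{2m(m-1)}\epsilon^2|A|^2=\frac12\Big(1-\frac{4(2m^2-3m+2)}{25(m-1)^2}\Big)=:\frac{b}{2}.
\]
One checks that $b>0$ for $m\ge 2$; this is exactly the denominator appearing in $E(m)$. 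Lemma \ref{final formula 2} then yields
\[
K\,G\ \ge\ \frac{b}{2}|A|^2\int_\Sigma w^2,
\qquad
K=(2\lambda_1-m)\Big(2\sqrt{\tfrac{m}{m-1}}|A|+\tfrac1\epsilon\Big)+2\epsilon\,(2\lambda_1^2-2m\lambda_1+m^2+m).
\]
Note that Lemma \ref{final formula 2} is valid for every $\epsilon>0$ and every positive $\beta$. The restriction \eqref{restriction epsion} was needed only in Lemma \ref{final formula 5}, so no restriction on $\epsilon$ is required here.

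\emph{Closing the estimate.} By \eqref{basic integral u}, \eqref{basic integral v} and Cauchy--Schwarz,
\[
G=\int_\Sigma f w\le \Big(\int_\Sigma f^2\Big)^{1/2}\Big(\int_\Sigma w^2\Big)^{1/2}.
\]
Combining with the previous display gives $G^2\le \int_\Sigma f^2\cdot \frac{2K}{b|A|^2}\,G$. Since $G>0$, this becomes $G/\int_\Sigma f^2\le 2K/(b|A|^2)$.

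\emph{Bounding $K$.} It remains to show $2K<\big(9\sqrt{m(m-1)}+\tfrac{8(m+1)\sqrt m}{5\sqrt{m-1}}\big)|A|$. I will use $m/2<\lambda_1\le m$, which gives $0<2\lambda_1-m\le m$ and $2\lambda_1^2-2m\lambda_1+m^2+m\le m^2+m$, together with $1/\epsilon=\frac52\sqrt{\frac{m-1}{m}}|A|$ and $1/|A|<|A|/m$ (since $|A|^2>m$). Then $2K$ is bounded by a linear combination of $\sqrt{m(m-1)}\,|A|$-type terms and an $(m+1)\sqrt{m/(m-1)}\,|A|$-type term. I expect the bookkeeping to land inside the stated constants, possibly with room to spare, for instance by bounding $\sqrt{m/(m-1)}\le \sqrt2\sqrt{(m-1)/m}\cdot\frac{m}{m-1}$ to absorb terms. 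Strictness comes from the strict inequalities $|A|^2>m$ and $\lambda_1>m/2$.

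The main obstacle is the constant bookkeeping in this last step, namely matching $9\sqrt{m(m-1)}$ and $\frac{8(m+1)\sqrt m}{5\sqrt{m-1}}$ exactly. Here I would try grouping the $(2\lambda_1-m)$-terms using $2\lambda_1-m\le m$ before simplifying. If the constants come out slightly different, the structure of the argument is unaffected; only the explicit value of $E(m)$ changes.
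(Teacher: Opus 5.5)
Up to the last step your proposal is the paper's proof. It uses the same $\epsilon=\frac25\sqrt{\frac{m}{m-1}}\frac1{|A|}$ and $\beta=\frac{m+1}{2m}\epsilon|A|^2$ in Lemma~\ref{final formula 2}, so the $f^2$- and gradient-coefficients vanish in the same way. The Cauchy--Schwarz step via \eqref{basic integral u}, \eqref{basic integral v} is also the same; with your sign of $w$ one actually has $G=-\int_\Sigma fw$, which is immaterial. So are the bounds $2\lambda_1-m\le m$, $2\lambda_1^2-2m\lambda_1+m^2+m\le m^2+m$ and $m/|A|<|A|$.

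The gap is the bookkeeping you deferred: with this $\epsilon$ it does not land inside the stated $E(m)$. The coefficient of $(2\lambda_1-m)$ in $K$ is
\[
|A|\Bigl(2\sqrt{\tfrac{m}{m-1}}+\tfrac52\sqrt{\tfrac{m-1}{m}}\Bigr)=|A|\Bigl(\tfrac92\sqrt{\tfrac{m-1}{m}}+\tfrac{2}{\sqrt{m(m-1)}}\Bigr),
\]
so your bounds give $2K<|A|\bigl(9\sqrt{m(m-1)}+\frac{4\sqrt m}{\sqrt{m-1}}+\frac{8(m+1)\sqrt m}{5\sqrt{m-1}}\bigr)$. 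That carries an extra $\frac{4\sqrt m}{\sqrt{m-1}}$, and sharper estimates cannot remove it. At $\lambda_1=m$, which your range $m/2<\lambda_1\le m$ allows, the exact value of $2K$ exceeds $\bigl(9\sqrt{m(m-1)}+\frac{8(m+1)\sqrt m}{5\sqrt{m-1}}\bigr)|A|$ whenever $(2m-3)|A|^2<2m(m+1)$. For $m=3$, $|A|^2=6$ (Cartan's isoparametric hypersurface, where indeed $\lambda_1=3$) it is $75.6$ against $73.2$.

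Your proposed rewriting of $\sqrt{m/(m-1)}$ cannot help, since $\sqrt{\frac{m}{m-1}}=\frac{m}{m-1}\sqrt{\frac{m-1}{m}}$ exactly. The paper reaches $E(m)$ only by writing this coefficient as $\frac{9|A|}{2}\sqrt{\frac{m-1}{m}}$, i.e.\ by replacing $2\sqrt{\frac{m}{m-1}}$ with the smaller $2\sqrt{\frac{m-1}{m}}$. So your fallback remark that ``only the explicit value of $E(m)$ changes'' is what actually happens with this $\epsilon$.

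There are two ways to repair it.

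\emph{Accept a larger constant.} State the lemma with numerator $9\sqrt{m(m-1)}+\frac{(8m+28)\sqrt m}{5\sqrt{m-1}}$. This only changes $C(m)$ in Theorem~\ref{curvature upper bound}.

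\emph{Take a smaller $\epsilon$.} The gradient coefficient $1-\frac{25\epsilon^2}{4}\frac{m-1}{m}|A|^2$ only needs to be nonnegative, not zero. Take $\epsilon=\frac{3}{10|A|}$ with the same $\beta$. The $f^2$-term still vanishes, and the gradient term has coefficient at least $\frac{7}{16}$, so you can drop it. The $w^2$-coefficient becomes $\frac12\bigl(1-\frac{9(2m^2-3m+2)}{100m(m-1)}\bigr)\ge0.41$, and the same bookkeeping yields
\[
\frac{G}{\int_\Sigma f^2}<\frac{1}{|A|}\cdot\frac{4m\sqrt{\frac{m}{m-1}}+\frac{20m}{3}+\frac{6(m+1)}{5}}{1-\frac{9(2m^2-3m+2)}{100m(m-1)}}.
\]
A short computation shows this constant is below the stated $E(m)$ for every $m\ge2$. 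For example, it is about $47.3$ against $53.4$ at $m=3$, and about $14.5m$ against $15.6m$ as $m\to\infty$.
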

\begin{proof}
  We take  $$\epsilon=\frac{2}{5}\sqrt{\frac{m}{m-1}}\cdot\frac{1}{|A|}, \ \beta=\frac{m+1}{2m}\cdot\epsilon |A|^2$$ in Lemma \eqref{final formula 2}. This gives
\begin{align}
&\left[ \frac{9|A|}{2}\sqrt{\frac{m-1}{m}}(2\lambda_1-m)+\frac{4}{5|A|}\sqrt{\frac{m}{m-1}}(2\lambda_1^2-2m\lambda_1 + m^2+m)\right] \notag\\
&\quad \times \left(\int_{\Omega_1}|\nabla u|^2+ \int_{\Omega_2}|\nabla v|^2 \right)\notag\\
&>  \frac{1}{2}\left(1-\frac{4}{25}\cdot \frac{2m^2-3m+2}{(m-1)^2}\right)|A|^2\cdot \bigintsss_{\Sigma}\big( \langle \nabla v ,\mathbf{n}  \rangle-\langle \nabla u,\mathbf{n} \rangle  \big)^2\notag.
\end{align}
 By the Cauchy-Schwarz inequality and \eqref{basic integral u},\eqref{basic integral v}, we have
\begin{align}
  \int_{\Sigma}\big( \langle \nabla v ,\mathbf{n}  \rangle-\langle \nabla u,\mathbf{n} \rangle  \big)^2 \geq \frac{\Big(\int_{\Sigma}f\big( \langle \nabla v ,\mathbf{n}  \rangle-\langle \nabla u,\mathbf{n} \rangle  \big) \Big)^2  }{\int_{\Sigma} f^2}=\frac{\Big(\int_{\Omega_1}|\nabla u |^2 +\int_{\Omega_2}|\nabla v|^2\Big)^2}{\int_{\Sigma} f^2}\notag.
  \end{align}
By combining the  inequalities above, noting that $|A|^2>m,\lambda_1\leq m$ and  then dividing both sides of the inequality by $\int_{\Omega_1}|\nabla u|^2 +\int_{\Omega_2}|\nabla v|^2$, we complete the proof of this lemma.
\end{proof}
To estimate the lower bound of $$\frac{\int_{\Omega_1}|\nabla u |^2 +\int_{\Omega_2}|\nabla v|^2  }{\int_{\Sigma} f^2 },$$ we need Theorem. For the reader's convenience, we briefly restate the content of Theorem.
\begin{thm}\label{steklov'}
 The first nonzero Steklov eigenvalue satisfies $$ \tau_1 \geq\frac{\textup{Volume}(S^m) }{D(m)\textup{Volume}(\Sigma)}, $$    i.e., 
$$ \tau_1= \inf\limits_{h\in C^1 (\Sigma), \ \int_{\Sigma} h =0 } \frac{\int_{\Omega_1}|\nabla (\mathcal{H}_1 h)|^2+\int_{\Omega_2}|\nabla (\mathcal{H}_2 h)|^2}{\int_{\Sigma} h^2 }\geq \frac{\textup{Volume}(S^m) }{D(m)\textup{Volume}(\Sigma)} , $$ where $\mathcal{H}_1 h $ and $\mathcal{H}_2 h$ denote the harmonic extensions of $h$ to the interior of $\Omega_1$ and $\Omega_2$, respectively; $$D(m)=\frac{1}{\sin(\delta_m)}+(m+1)\delta_m \ ,\ \sin^2(\delta_m)= \frac{2}{\sqrt{4(m+1)^2+1}+1}, \ 0<\delta_m<\frac{\pi}{2}.$$
\end{thm}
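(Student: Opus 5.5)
The plan is to glue the two harmonic extensions into one function on $S^{m+1}$, apply a spherical mean value formula centred at points of $\Sigma$, integrate over $\Sigma$, and control the outcome with Proposition \ref{volume growth}. Let $h$ be a first eigenfunction of $\wedge$, so $\wedge h=\tau_1 h$ and $\int_\Sigma h=0$. Let $u=\mathcal H_1 h$, $v=\mathcal H_2 h$, and define $w=u$ on $\Omega_1$, $w=v$ on $\Omega_2$. Then $w$ is Lipschitz on $S^{m+1}$ and harmonic off $\Sigma$. Its distributional Laplacian is the jump of normal derivatives across $\Sigma$, namely $\Delta w=-(\wedge_1 h+\wedge_2 h)\,d\sigma=-\tau_1 h\,d\sigma$, up to a sign fixed by the orientation conventions. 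Also $\int_{S^{m+1}}|\nabla w|^2=\tau_1\int_\Sigma h^2$.

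First I will use the standard identity on $S^{m+1}$ for the spherical mean $M(r)$ of $w$ over $\partial B_r(x_0)$:
\[
M'(r)=\frac{1}{|S^m|\sin^m r}\int_{B_r(x_0)}\Delta w .
\]
It holds for Lipschitz $w$ by approximation; where $\partial B_r$ meets $\Sigma$ nontransversally, I will use Sard's theorem as in Proposition \ref{volume growth}. For $x_0\in\Sigma$ and $0<\delta<\frac{\pi}{2}$ this gives
\[
h(x_0)=M(\delta)+\frac{\tau_1}{|S^m|}\int_0^\delta\frac{1}{\sin^m r}\int_{B_r(x_0)\cap\Sigma}h\,d\sigma\,dr .
\]
I will multiply by $h(x_0)$ and integrate over $x_0\in\Sigma$. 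In the double integral I will use $|h(x)h(x_0)|\le\frac12(h(x)^2+h(x_0)^2)$ together with the symmetry of $\rho(x,x_0)$. This reduces that term to $\frac{\tau_1}{|S^m|}\int_\Sigma h^2(x_0)\int_0^\delta \sin^{-m}r\,|B_r(x_0)\cap\Sigma|\,dr$.

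Next I will bound $|B_r(x_0)\cap\Sigma|$ by Proposition \ref{volume growth}: since $\cos\rho\ge\cos r$ on $B_r$, it is at most $\mathrm{Vol}(\Sigma)\sin^m r/\cos r$. Removing the $\cos r$ loss needs care, e.g. by inserting the weight $\cos\rho$ into the mean value formula from the start, so that the integrand becomes bounded. This should produce a contribution of order $\tau_1\,\mathrm{Vol}(\Sigma)\,\delta\,(m+1)/|S^m|$ times $\int_\Sigma h^2$, which I expect to be the origin of the $(m+1)\delta_m$ term in $D(m)$.

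The remaining term $\int_\Sigma h(x_0)M(\delta)\,d\sigma(x_0)$ is the main obstacle, because the mean of $w$ over a sphere is not directly controlled by the data on $\Sigma$. I would first try averaging the identity over $\delta$ in a window, which turns $M$ into a solid average over an annulus. I would then write $M(\delta)=w(\text{antipodal-type comparison})+\int|\nabla w|$ along radial geodesics, and use $\int_\Sigma h=0$ to kill the constant part. Cauchy--Schwarz then bounds the gradient part by roughly $\frac{1}{\sin\delta}\bigl(\mathrm{Vol}(\Sigma)/|S^m|\bigr)^{1/2}\bigl(\int|\nabla w|^2\bigr)^{1/2}\bigl(\int_\Sigma h^2\bigr)^{1/2}$, with the $1/\sin^m$ Jacobian absorbed. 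Since $\int|\nabla w|^2=\tau_1\int_\Sigma h^2$, both terms become multiples of $\tau_1\mathrm{Vol}(\Sigma)/|S^m|$ times $\int_\Sigma h^2$ (after an AM--GM step on the square-root term). Dividing by $\int_\Sigma h^2$ gives $1\le\tau_1\frac{\mathrm{Vol}(\Sigma)}{|S^m|}\bigl(\frac{1}{\sin\delta}+(m+1)\delta\bigr)$. Finally I will choose $\delta=\delta_m$ to minimise $\frac{1}{\sin\delta}+(m+1)\delta$; the stated relation $\sin^2\delta_m=2/(\sqrt{4(m+1)^2+1}+1)$ should arise from this optimisation, or from the analogous balancing of the two error terms if the exact constants differ.
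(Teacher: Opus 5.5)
\textbf{The gap.} It sits exactly at the step you flag as the main obstacle, the control of $\int_\Sigma h(x_0)M(\delta)\,d\sigma(x_0)$, and the mechanism you sketch there does not work.

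The condition $\int_\Sigma h=0$ annihilates a comparison value only if that value is independent of $x_0$. An antipodal-type value such as $w(-x_0)$ is not. Integrating $|\nabla w|$ along rays from $x_0$ produces the kernel $\sin^{-m}\rho(x_0,z)$, whose integral over $x_0\in\Sigma$ blows up like $\log\bigl(1/\textup{dist}(z,\Sigma)\bigr)$ as $z\to\Sigma$. So the Jacobian is not simply ``absorbed'', and the Cauchy--Schwarz bound you state is unsupported. Even granting it, a factor $1/\sin\delta$ rather than $(\sin\delta)^{-1/2}$ in front would not lead to $D(m)$.

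What is missing is the tool that turns a mean of $w$ into the energy $\int_{S^{m+1}}|\nabla w|^2=\tau_1\int_\Sigma h^2$, namely the spectral gap of the ambient sphere. Set $\overline w=w-\frac{1}{\textup{Volume}(S^{m+1})}\int_{S^{m+1}}w$. The glued Lipschitz function then satisfies $\int_{S^{m+1}}\overline w^2\le\frac{1}{m+1}\int_{S^{m+1}}|\nabla w|^2=\frac{\tau_1}{m+1}\int_\Sigma h^2$. This is where the term $1/\sin\delta_m$ of $D(m)$ comes from.

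\textbf{How the paper closes it.} The paper uses the $\cos\rho$-weighted solid mean-value identity $\textup{Volume}(S^m)\phi(y)=\frac{m+1}{\sin^{m+1}s}\int_{B_s(y)}\phi\cos\rho+(m+1)\int_0^s\frac{\cos t}{\sin^{m+2}t}\int_{B_t(y)}\langle\nabla\phi,\nabla\cos\rho\rangle\,dt$ with $\phi=\overline w^2$.
\begin{itemize}
\item The inner integral is computed with the Lipschitz test function $(\cos\rho-\cos t)_+$. It equals a $\Sigma$-term $2\tau_1\int_{B_t(y)\cap\Sigma}(\cos\rho-\cos t)\overline w h$ minus a nonnegative bulk term, which is dropped.
\item After integrating over $y\in\Sigma$, Fubini and Proposition~\ref{volume growth} bound the solid term by $\frac{(m+1)\textup{Volume}(\Sigma)}{\sin s}\int_{S^{m+1}}\overline w^2$, which the Poincar\'e inequality controls. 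The proof of that proposition works for every centre $z\in S^{m+1}$, not only $z\in\Sigma$.
\item The $\Sigma$-term is handled with $\cos t(\cos\rho-\cos t)\le\frac12\cos\rho\sin^2t$ and volume growth.
\item Finally, $\int_\Sigma h^2\le\int_\Sigma\overline w^2$ because $\int_\Sigma h=0$.
\end{itemize}

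\textbf{Repairing your linear version.} Take $\phi=\overline w$ in the same identity; the constant shift is invisible after pairing with $h$. Bound the solid term by Cauchy--Schwarz in $(y,z)$ with respect to $\cos\rho\,dz\,d\sigma_y$, then Fubini, volume growth at centres $z$, and the Poincar\'e inequality. Bound the jump term by your symmetrisation plus the same trigonometric inequality. With $X=\tau_1\textup{Volume}(\Sigma)/\textup{Volume}(S^m)$ this gives $1\le\sqrt{X/\sin s}+\frac{(m+1)s}{2}X$. AM--GM on the square root then yields exactly $1\le X\bigl(\frac{1}{\sin s}+(m+1)s\bigr)$, which is minimised at $s=\delta_m$.

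Two side remarks. The $\cos r$ loss you worry about is harmless, since $\int_0^\delta\sec r\,dr<\infty$. The factor $m+1$ multiplying $\delta_m$ comes from the weighted identity, not from removing that loss.
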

\begin{proof}[Proof of Theorem \ref{steklov}]
 Let $g$ be the eigenfunction with eigenvalue with eigenvalue $\rho_1$. Hence ,
\begin{align}
   \int_{\Sigma} g=0, \quad  -\langle\nabla \mathcal{H}_1 g, \mathbf{n} \rangle+ \langle \nabla\mathcal{H}_1 g, \mathbf{n}\rangle =\tau_1 g \label{steklov 1}
   \end{align}
and \begin{align}
    \int_{\Omega_1}|\nabla (\mathcal{H}_1 g)|^2+\int_{\Omega_2}|\nabla (\mathcal{H}_2 g)|^2=\tau_1 \int_{\Sigma} g^2.\label{steklov 2}
    \end{align}
Note that $\mathbf{n}$ is the inward-pointing unit normal vector field with respect to $\Omega_1$ here.
Let $$w =\begin{cases}\mathcal{H}_1 g  , \ \textup{on } \Omega_1 \\\mathcal{H}_2 g  ,\ \textup{on } \Omega_2.\end{cases}$$
Hence, $w$ is a global Lipschitz  function on $S^{m+1}$. 
Let $$\overline{w}=w-\frac{1}{\textup{Volume}(S^{m+1})}\int_{S^{m+1}}w.$$
 By the fact that the first nonzero eigenvalue of $S^{m+1}$ is $m+1$ and \eqref{steklov 2}, we have
 \begin{align}
    \int_{S^{m+1}} \overline{w}^2 \leq \frac{1}{m+1}\int_{S^{m+1}} |\nabla \overline{w}|^2=\frac{\tau_1}{m+1} \int_{\Sigma} g^2.\label{sphere neumann}
 \end{align}
In  what follows, we will prove the mean value formula  for $\overline{w}^2 $ on $S^{m+1}$. 
Moreover, unlike standard proof, since our case is not globally smooth, we must also handle the integral over the boundary $\Sigma$ during the process.

First, denote by $\rho(,)$  the distance in $S^{m+1} $. Fix $y \in\Sigma$ and  let $$B_s (y)=\{z\in S^{m+1}| \rho (y,z) \leq s  \}.$$  In the following  discussion, for convenience, we  use $\rho$ to denote the distance to $y$ before \eqref{mean value 7}.
Then, using the fact that  $$\Delta \cos \rho =-(m+1) \cos \rho $$ on $S^{m+1}$ gives
\begin{align}
  \textup{div} \left( \frac{\nabla \cos \rho}{ \sin^{m+1} \rho    }    \right) =\frac{-\Delta \cos \rho   }{ \sin^{m+1} \rho}+ \frac{(m+1)\langle\nabla \sin \rho, \nabla \cos \rho\rangle}{\sin^{m+2}\rho}=0.\notag
\end{align}
  Then for any $\epsilon>0$ and any $\epsilon<s< \frac{\pi}{2}$, we multiply both sides of the equality above by $\overline{w}^2$ and then apply the divergence theorem  to $\overline{w}^2 \textup{div} \left( \frac{\nabla \cos \rho}{ \sin^{m+1} \rho  }    \right)$  on  $B_s (y)\setminus B_{\epsilon}(y)$ (the divergence theorem holds for Lipschitz  functions ). This yields
\begin{align}
 &\frac{1}{\sin^{m +1}s}  \int_{\partial B_s (y) } \overline{w}^2 \langle\nabla \cos \rho, \nabla \rho \rangle-\frac{1}{\sin^{m+1} \epsilon} \int_{\partial B_\epsilon (y) } \overline{w}^2\langle\nabla \cos \rho, \nabla \rho \rangle\notag\\
 &-\int_{B_s (y)\setminus B_\epsilon (y) } \frac{\langle\nabla \overline{w}^2, \nabla \cos \rho \rangle}{\sin^{m+1} \rho}= 0\ .\label{mean value 1}
\end{align}
Applying the divergence theorem again, we have 
\begin{align}
&\frac{1}{\sin^{m +1}s}  \int_{\partial B_s (y) } \overline{w}^2 \langle\nabla \cos \rho, \nabla \rho \rangle\notag\\
=&\frac{1}{\sin^{m +1}s}  \int_{ B_s (y) } \overline{w}^2 \Delta \cos\rho  + \frac{1}{\sin^{m +1}s}  \int_{ B_s (y) }\langle \nabla \overline{w}^2, \nabla \cos \rho \rangle\notag\\
=& -\frac{m+1}{\sin^{m +1}s}  \int_{ B_s (y) } \overline{w}^2  \cos\rho +  \frac{1}{\sin^{m +1}s}  \int_{ B_s (y) }\langle \nabla \overline{w}^2, \nabla \cos \rho \rangle \label{mean value 2}   
\end{align}
and 
\begin{align}
&\frac{1}{\sin^{m +1}\epsilon}  \int_{\partial B_\epsilon (y) } \overline{w}^2 \langle\nabla \cos \rho, \nabla \rho \rangle\notag\\
=&\frac{1}{\sin^{m +1}\epsilon}  \int_{ B_\epsilon (y) } \overline{w}^2\Delta \cos\rho  + \frac{1}{\sin^{m +1}\epsilon}  \int_{ B_\epsilon (y) }\langle \nabla \overline{w}^2, \nabla \cos \rho \rangle\notag\\
=& -\frac{m+1}{\sin^{m +1}\epsilon}  \int_{ B_\epsilon (y) } \overline{w}^2  \cos\rho +  \frac{1}{\sin^{m +1}\epsilon}  \int_{ B_\epsilon (y) }\langle \nabla \overline{w}^2, \nabla \cos \rho \rangle. \label{mean value 2'} 
\end{align}
Also, using integration by parts and the coarea formula gives 
\begin{align}
&\int_{B_s (y)\setminus B_\epsilon (y) } \frac{\langle\nabla \overline{w}^2, \nabla \cos \rho \rangle}{\sin^{m+1} \rho}\notag\\
=& (m+1)\int_{\epsilon}^{s}\frac{\cos t}{\sin^{m+2}t }dt\int_{B_{t}(y)} \langle \nabla \overline{w}^2, \nabla \cos \rho \rangle+\frac{1}{\sin^{m+1} s} \int_{B_s(y)}\langle \nabla \overline{w}^2, \nabla \cos \rho \rangle\notag\\
&-\frac{1}{\sin^{m+1}\epsilon}\int_{B_{\epsilon}(y) }\langle \nabla \overline{w}^2, \nabla \cos \rho \rangle.\label{mean value 3}
\end{align}
Combining \eqref{mean value 1}, \eqref{mean value 2}, \eqref{mean value 2'} and \eqref{mean value 3},  and letting $\epsilon \rightarrow 0$, we get 
\begin{align}
&\textup{Volume}(S^m) \overline{w}^2 (y)\notag\\
=&\frac{m+1}{\sin^{m+1}s}\int_{B_s(y)} \overline{w}^2 \cos \rho + (m+1)\int_{0}^s\frac{\cos t}{\sin^{m+2}t }dt\int_{B_{t}(y)} \langle \nabla \overline{w}^2, \nabla \cos \rho \rangle.\label{mean value 4}
\end{align}
$\forall 0<t<s$, let $$\tilde{\gamma}=\begin{cases} \cos \rho-\cos t, \textup{on } B_t(y),\\ 0, \textup{otherwise}.
\end{cases}$$
Then $\tilde{\gamma}$ is a global Lipschitz function. 

Note that the divergence theorem  holds for Lipschitz functions again  and the definition of $\overline{w}$: $$\overline{w}=\begin{cases}\mathcal{H}_1 g-\frac{1}{\textup{Volume}(S^{m+1})}\int_{S^{m+1}}w, \ \textup{on } \Omega_1 \\\mathcal{H}_2 g-\frac{1}{\textup{Volume}(S^{m+1})}\int_{S^{m+1}}w,\ \textup{on } \Omega_2.\end{cases}$$ Hence, we obtain
\begin{align}
   & \int_{B_t(y)}  \langle \nabla \overline{w}^2, \nabla \cos \rho \rangle  \notag\\
  =&\int_{\Omega_1} \langle \nabla \overline{w}^2, \nabla \tilde{\gamma}\rangle +\int_{\Omega_2} \langle \nabla \overline{w}^2, \nabla \tilde{\gamma}\rangle  \notag\\
  =&-2\int_{\Sigma}\tilde{\gamma} \overline{w}\langle \nabla (\mathcal{H}_1 g)  ,\mathbf{n} \rangle-\int_{\Omega_1}\tilde{\gamma} \Delta \overline{w}^2+2\int_{\Sigma}\tilde{\gamma}\overline{w} \langle \nabla(\mathcal{H}_2 g),\mathbf{n} \rangle-\int_{\Omega_2}\tilde{\gamma} \Delta \overline{w}^2 \notag\\
  =& 2\tau_1 \int_{B_t(y)\cap\Sigma}(\cos \rho-\cos t)\overline{w} g- 2\int_{B_t(y)}(\cos \rho-\cos t) |\nabla(\mathcal{H}_1g)|^2\notag\\
  &-2\int_{B_t (y)}(\cos \rho-\cos t)|\nabla(\mathcal{H}_2 g)|^2.\label{mean value 5}
  \end{align}
where in the last step, we use \eqref{steklov 2}, the definition of $\tilde{\gamma}$,  and the harmonicity  of  $ \mathcal{H}_1g$ and $\mathcal{H}_2 g$.

Combining \eqref{mean value 4} and \eqref{mean value 5} gives
\begin{align}
&\textup{Volume}(S^m) \overline{w}^2 (y)\notag\\
=&\frac{m+1}{\sin^{m+1}s}\int_{B_s(y)} \overline{w}^2 \cos \rho + 2(m+1)\tau_1\int_{0}^s \frac{\cos t}{\sin^{m+2} t} dt\int_{B_t(y)\cap\Sigma}(\cos \rho-\cos t)\overline{w} g\notag\\
&-2(m+1) \int_{0}^s \frac{\cos t}{\sin^{m+2} t} dt \int_{B_t(y)}(\cos \rho-\cos t) |\nabla(\mathcal{H}_1 g)|^2\notag\\
&-2(m+1) \int_{0}^s \frac{\cos t}{\sin^{m+2} t} dt \int_{B_t(y)}(\cos \rho-\cos t) |\nabla(\mathcal{H}_2 g)|^2\notag\\
\leq & \frac{m+1}{\sin^{m+1}s}\int_{B_s(y)} \overline{w}^2 \cos \rho + 2(m+1)\tau_1\int_{0}^s \frac{\cos t}{\sin^{m+2} t} dt\int_{B_t(y)\cap\Sigma}(\cos \rho-\cos t)\overline{w}g.\label{mean value 6}
\end{align}
We integrate both sides of \eqref{mean value 6} over $\Sigma$ and use Fubini's theorem. This yields
\begin{align}
    &\frac{\textup{Volume}(S^m)}{m+1} \int_{\Sigma}\overline{w}^2 \ d\sigma_y  \notag\\
\leq & \frac{1}{\sin^{m+1}s} \int_{\rho(, \Sigma)\leq s }\overline{w}^2 (z) \ dS_z  \int_{B_s (z) \cap\Sigma} \cos (\rho(z,y))\  d\sigma_y\notag\\
&+ 2\tau_1 \int_{0}^s \frac{\cos t}{\sin^{m+2} t} dt \int_{\Sigma } \overline{w}(z) g(z)\  d\sigma_z \int_{B_{t}(z)}\big(\cos (\rho(z,y)-\cos t)\big)\  d\sigma_y\notag\
\end{align}
we continue from the previous page:
\begin{align}
\leq & \frac{1}{\sin^{m+1}s} \int_{\rho(, \Sigma)\leq s }\overline{w}^2 (z) \ dS_z  \int_{B_s (z) \cap\Sigma} \cos (\rho(z,y))\  d\sigma_y\notag\\
&+\tau_1 \int_{0}^s \frac{1}{\sin^{m} t} dt \int_{\Sigma } \overline{w}(z) g(z)\  d\sigma_z \int_{B_{t}(z)}\cos (\rho(z,y)) \  d\sigma_y.\label{mean value 7}
\end{align}
where $dS$ and $d\sigma$ denote the volume element of $S^{m+1}$ and $\Sigma$, respectively; in the last step, we use the the inequality $$\cos a (\cos b-\cos a) \leq \frac{1}{2}\cos b \sin^2 a,\  0\leq b\leq a<\frac{\pi}{2}.  $$

By Proposition \ref{volume growth}, we know that for any  $0<t<\frac{\pi}{2}$ and any $z\in S^{m+1}$, we have
$$ \int_{B_t (z) \cap \Sigma}\cos(\rho(z,y))d\sigma_y \leq \textup{Volume}(\Sigma) \sin^{m} s.  $$
We substitute this result and \eqref{sphere neumann} into \eqref{mean value 7} to obtain 
\begin{align}
&\frac{\textup{Volume}(S^m)}{m+1} \int_{\Sigma}\overline{w}^2 \ d\sigma_y  \notag\\
\leq & \frac{\textup{Area}(\Sigma)}{\sin s} \int_{S^{m+1} }\overline{w}^2 (z) \ dS_z + \tau_1 \textup{Volume}(\Sigma) s\int_{\Sigma} \overline{w}(z)g(z) d\sigma_z\notag\\
\leq & \tau_1\textup{Volume}(\Sigma) \left(\frac{1}{(m+1) \sin s} \int_{\Sigma}g^2 +s\int_{\Sigma} \overline{w}(z)g(z) d\sigma_z\right).\label{mean value 8}
\end{align}
From the definition of $\overline{w}$ and \eqref{steklov 1}, we find that 
$$\int_{\Sigma}g^2 \leq \int_{\Sigma} \overline{w} ^2. $$
Using this inequality and the Cauchy- Schwarz inequality  yields
\begin{align}
 &\frac{\textup{Volume}(S^m)}{m+1} \int_{\Sigma}\overline{w}^2  \notag\\
\leq &\tau_1\textup{Volume}(\Sigma) \left(\frac{1}{(m+1) \sin s} \int_{\Sigma}g^2 +s\sqrt{\int_{\Sigma} \overline{w}^2 } \sqrt{\int_{\Sigma} g^2 }\right)\notag\\
\leq & \tau_1\textup{Volume}(\Sigma) \left(\frac{1}{(m+1) \sin s} \int_{\Sigma}\overline{w}^2 +s\int_{\Sigma}\overline{w}^2\right),\notag
\end{align}
i.e.,
\begin{align}
    \frac{\textup{Volume}(S^m)}{m+1} \int_{\Sigma}\overline{w}^2\leq \tau_1\textup{Volume}(\Sigma)\left( \frac{1}{(m+1)\sin s} +s \right),\label{mean value 9}
    \end{align}
which holds for any $0<s<\frac{\pi}{2}$.

Let $ \delta_{m}$ be a constant  satisfying $$\sin^2(\delta_m)= \frac{2}{\sqrt{4(m+1)^2+1}+1}\ , \ 0<\delta_m<\frac{\pi}{2}.$$ The right-hand of the inequality attains its maximum at $\delta_m $.
Then the theorem follows by taking $s=\delta_m$.
\end{proof}
Now, we prove Theorem \ref{upper bound function}.
\begin{proof}[Proof of Theorem \ref{upper bound function}]
  From Theorem \ref{steklov}, we have
  \begin{align}
\frac{\int_{\Omega_1}|\nabla u |^2 +\int_{\Omega_2}|\nabla v|^2  }{\int_{\Sigma} f^2 }\geq \tau_1\geq \frac{\textup{Volume}(S^m) }{D(m)\textup{Volume}(\Sigma)}.\label{final steklov}
\end{align}
Then the theorem follows from Lemma \ref{upper bound function} and \eqref{final steklov}.

\end{proof}

 \vspace{0.5cm}
 \textbf{\Large Declarations }

\vspace{0.2cm}
\textbf{Conflict and interest:} The author states that there is no conflict of interest.

	\bibliographystyle{siam}
	\bibliography{ref}
 \end{document}